\documentclass[11pt]{article} 
\usepackage[utf8]{inputenc}
\usepackage{caption,graphicx,amsmath,amsthm,amssymb, authblk,xcolor,enumitem,subfigure,float,verbatim,hyperref,pgfplots,bibentry,mathtools,soul,isomath, cancel}
\usepackage{mathrsfs}
\usepackage{indentfirst}
\usepackage{nccmath}
\graphicspath{ {images/} }
\usepackage{multicol, MnSymbol}
\usepackage{graphicx,amsmath,amssymb,amsthm,marvosym,tikz,fontenc}
\usetikzlibrary{backgrounds}
\usetikzlibrary{patterns, shapes.misc, positioning}
\usepackage{pgfplots}
\pgfplotsset{width=10cm,compat=1.9,tick scale binop=\times}
\usepackage{indentfirst}
\usepackage[a4paper,bindingoffset=0.2in,%
left=0.8in,right=1in,top=1.2in,bottom=1.2in,%
footskip=.25in]{geometry}
\usepackage{relsize}
\usepackage[english]{babel}
\allowdisplaybreaks
\theoremstyle{plain}
\newtheorem{theorem}{Theorem}[section]
\newtheorem{lemma}[theorem]{Lemma}

\newtheorem{corollary}[theorem]{Corollary}
\newtheorem{definition}[theorem]{Definition}
\newtheorem{remark}[theorem]{Remark}
\newtheorem{note}[theorem]{Note}

\date{}
\begin{document}
\title
{\bf{Eccentricity energy change of coalescence of graphs due to edge deletion}}
\author {\small Anjitha Ashokan
\footnote{anjithaashokan1996@gmail.com}   and Chithra A V
\footnote{chithra@nitc.ac.in} \\ \small Department of Mathematics, National Institute of Technology Calicut,\\
\small Kerala, India-673601}
\date{}
\maketitle
\begin{abstract} 
The eccentricity matrix of a graph is obtained from the distance matrix by keeping the largest entries in their row or column, and the remaining entries are replaced by zeros. The eccentricity energy of a graph is the sum of the absolute values of the eigenvalues of its eccentricity matrix. 
In this paper, we investigate the effect of edge deletion on the eccentricity energy of  graphs of the form
$$G=K_{2n}\circ_{n} K_{2n}\circ_{n}\cdots \circ_{n} K_{2n}, (\text{\textit{l} copies of } K_{2n}),$$ 
where $n\geq 3,$ $l\geq 2,$ and $\circ_{n}$ denotes the $n-$coalescence of graphs, and prove that the eccentricity energy increases whenever an edge is removed. This result identifies a class of graphs whose eccentricity energy exhibits monotonic growth under edge deletion.\\
 \noindent \textbf{ Keywords: Eccentricity matrix, Eccentricity energy, coalescence of graphs}  \\
 \noindent   \textbf{ Mathematics Subject Classifications: 05C50, 05C76. } 
\end{abstract}

\section{Introduction}

Let $G$ be a finite, simple, and connected graph on $n$ vertices with vertex set $V=\{v_{1},v_{2},\ldots,v_{n}\}$ and $E\subseteq V\times V$ as the  edge set. The complete graph on $n$ vertices is denoted by $K_{n}.$ 
$A(G)$ denotes the $n\times n$ \textit{adjacency matrix} of $G,$
whose rows and columns are indexed by the vertex set of $G$ and the entries are defined as, $A(G)=(a_{ij}),$ where $a_{ij}=1$ if the vertices $v_{i},$ $v_{j}$ are adjacent and $0$ otherwise.
The \textit{energy} of $G$ is defined as $E_{A}(G)=\sum_{i=1}^{n}|\lambda_{i}|,$ where $\lambda_{1},\lambda_{2},\ldots,\lambda_{n}$ are the eigenvalues of $A(G).$
The \textit{distance} between two vertices $v_{i},v_{j},$ denoted by $d(v_{i},v_{j})$ is defined as the length of the shortest path between $v_{i}$ and $v_{j}.$ 
$D(G)$ denotes the   \textit{distance matrix}  of $G$ and is defined as  $D(G)=(d_{ij}),$ where $d_{ij}=d(v_{i},v_{j}).$
Let $\gamma_{1},\gamma_{2},\ldots,\gamma_{n}$ be the eigenvalues of $D(G),$ then the \textit{distance energy} of $G$ is  defined as, $E_{D}(G)=\sum_{i=1}^{n}|\gamma_{i}|.$\\
The \textit{eccentricity} of a vertex $v_{i},$ $e(v_{i}),$ is the largest distance from $v_{i}$ to all other vertices.
\textit{Eccentricity matrix}  of $G,$ $\epsilon(G)=(\epsilon_{ij})$ is an $n\times n$ symmetric matrix, where 
\begin{equation*}
   \epsilon_{ij}=\begin{cases}
            d(v_{i},v_{j}) & \text{ if } d(v_{i},v_{j})=\min\{e(v_{i}),e(v_{j})\},\\
            0 &\text{ otherwise.}
            \end{cases}
\end{equation*}
In 2013, Rand\'{c} introduced the eccentricity matrix as  $D_{MAX}$, and later it was renamed as eccentricity matrix by Wang et al.\cite{MR3906706} in $2018.$
  The set of all eigenvalues of $\epsilon(G)$ is the $\epsilon-$ spectrum of $G.$ 
 Let $\epsilon_{1},\epsilon_{2},\ldots,\epsilon_{n}$ be the eigenvalues of $\epsilon(G),$ known as $\epsilon-$ eigenvalues. The \textit{eccentricity energy} of $G$ is defined as $E_{\epsilon}(G)=\sum_{i=1}^{n}|\epsilon_{i}|.$
The \textit{$\epsilon-$spectral radius} of $G$ is the maximum of absolute values of $\epsilon-$eigenvalues of $G,$ and is denoted by $\rho_{\epsilon}(G).$
Unlike the adjacency and distance matrix, the eccentricity matrix is not always irreducible.\\

Let $e$ be an edge of $G$ such that $G\backslash\{e\}$ remains connected. 
The eccentricity energy of a graph may increase, decrease, or remain unchanged after the deletion of an edge. Consequently, the effect of edge deletion on graph energy is a non-trivial problem, and it might depend on the structure of the graph.
It is known that if a graph $G$ has a unique positive distance eigenvalue, then the distance energy of $G\backslash\{e\}$ 
is greater than the distance energy of $G$ \cite{zbMATH06124033}. However, this behavior need not hold in the case of eccentricity energy. For example, the graph $G$ shown in Figure \ref{fig:1} has exactly one positive
$\epsilon$-eigenvalue, but $E_{\epsilon}(G)=25.2664>20.159=E_{\epsilon}(G\backslash\{e\}).$ 
\begin{figure}[H]
 \centering
 \begin{tikzpicture}[scale=.5]

 \filldraw[fill=black](0,0)circle(0.1cm); 
 \filldraw[fill=black](-3,1.5)circle(0.1cm); 
 \filldraw[fill=black](-3,-1.5)circle(0.1cm); 
  \filldraw[fill=black](3,1.5)circle(0.1cm); 
   \filldraw[fill=black](3,-1.5)circle(0.1cm); 
    \filldraw[fill=black](-1,3)circle(0.1cm); 
     \filldraw[fill=black](1,3)circle(0.1cm); 
      \filldraw[fill=black](-1,-3)circle(0.1cm); 
       \filldraw[fill=black](1,-3)circle(0.1cm); 
  \draw(0, 0)--(-3,1.5);
  \draw(0, 0)--(-3,-1.5);
  \draw(0, 0)--(3,1.5);
  \draw(0, 0)--(3,-1.5);
  \draw(0, 0)--(-1,3);
  \draw(0, 0)--(1,3);
  \draw(0, 0)--(-1,-3);
  \draw(0, 0)--(1,-3);
   \draw(-3,1.5)--(-3,-1.5);
   \draw(-1,3)--(1,3);
   \draw(3,1.5)--(3,-1.5);
   \draw(-1,-3)--(1,-3);
\node at (2,1.45) {\bfseries e};   
   
 \end{tikzpicture}

  \caption{}
 \label{fig:1}
 \end{figure}
The variation of graph energy due to edge deletion for various graph matrices has been extensively studied in recent years. The change in adjacency energy under edge deletion was investigated in \cite{zbMATH05254234,shan2020energy,wang2015graph}. In \cite{zbMATH06891800}, the authors studied complete bipartite graphs and showed that their distance energy increases when an edge is removed. Moreover, they conjectured that complete 
$k$-partite graphs possess the same property, which was later proved in \cite{zbMATH07312043,zbMATH07125983}.
 The eccentricity energy change due to edge deletion in complete $k$-partite graphs was examined in \cite{zbMATH07476489}.\\

 Classifying graphs according to their eccentricity energy behavior under edge deletion is an interesting area of research.
 Motivated by these, we investigate the change in eccentricity energy of the coalescence of complete graphs. In particular, we prove that for
 $$G=K_{2n}\circ_{n} K_{2n}\circ_{n}\cdots \circ_{n} K_{2n},$$
where $G$ consists of $l$ copies of $K_{2n}$ with   $n\geq 3,$ $l\geq 2,$ the eccentricity energy of $G$ always increases under the deletion of an edge.

 Following are the prerequisites for the rest of the paper.
\begin{definition}$\textnormal{\cite{sudhir_r_jog_2021_4725009}}$
 Let $G_{1}$ and $G_{2}$ be connected graphs of orders $n_{1}$ and $n_{2}$, and sizes $m_{1}$ and $m_{2}$, respectively, each containing an induced complete subgraph $K_{k}$ with $k < \min\{n_{1}, n_{2}\}$. The $k$-coalescence of $G_{1}$ and $G_{2}$, denoted by $G_{1} \circ_{k} G_{2}$, is obtained by identifying the corresponding vertices of these induced $K_{k}$ subgraphs. The order and size of 
$G_{1} \circ_{k} G_{2}$ are $n_{1}+n_{2}-k$ and $m_{1}+m_{2}-\binom{k}{2}$, respectively.
\end{definition}
As a generalization, the coalescence operation can be extended to $l\geq 2$ pairwise disjoint graphs of orders $n_{1},n_{2},\ldots, n_{l}$ by identifying an induced complete subgraph  $K_{k},$ $ k < \min\{n_{1}, n_{2},\ldots,n_{l}\}$ from each graph. The resulting graph is denoted by $G_{1}\circ_{k} G_{2}\circ_{k} \cdots \circ_{k} G_{l}.$

\begin{theorem}(Perron-Frobenius theorem)\label{Perron-Frobenius Theorem}\textnormal{\cite{zbMATH05908284}}
     Let $T$ be an irreducible matrix. If $0\leq S\leq T, S\neq T$, then every eigenvalue $\gamma$ of $S$ satisfies, $|\gamma|\leq \lambda_{max},$  where $\lambda_{max}$ is the largest eigenvalue of $T.$
\end{theorem}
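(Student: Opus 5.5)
The plan is to compare an arbitrary eigenvalue of $S$ with the Perron root of $T$ through a positive left eigenvector of $T$, and to get the inequality by pairing the two. Throughout, $T$ is a nonnegative irreducible square matrix; this is implicit in the hypothesis $0\leq S\leq T$. We use the classical Perron--Frobenius facts for such $T$. Its spectral radius $\rho(T)$ is itself an eigenvalue, so $\rho(T)=\lambda_{max}$. Moreover, $T^{\top}$ is also irreducible and nonnegative, so there is a vector $y$ with all entries strictly positive such that $y^{\top}T=\lambda_{max}\,y^{\top}$.

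Fix an eigenvalue $\gamma$ of $S$ with eigenvector $x\neq 0$, possibly complex. Write $|x|$ for the entrywise absolute value, and use the componentwise order. The triangle inequality and $S\geq 0$ give
\begin{equation*}
|\gamma|\,|x| = |Sx| \leq S|x|.
\end{equation*}
Since $0\leq S\leq T$ entrywise and $|x|\geq 0$, we also have $S|x|\leq T|x|$. Together these give $|\gamma|\,|x|\leq T|x|$ componentwise.

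Next, multiply this inequality on the left by the positive vector $y^{\top}$; this preserves componentwise inequalities between vectors. We obtain
\begin{equation*}
|\gamma|\, y^{\top}|x| \leq y^{\top}T|x| = \lambda_{max}\, y^{\top}|x|.
\end{equation*}
Because $y$ is strictly positive and $|x|$ is nonnegative and nonzero, the scalar $y^{\top}|x|$ is strictly positive. Dividing by it yields $|\gamma|\leq\lambda_{max}$, which is the claim.

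The only step needing care is the existence of a strictly positive left Perron vector. This is exactly where irreducibility of $T$ is used, since without it $y$ could have zero entries and $y^{\top}|x|$ could vanish. I would cite the standard Perron--Frobenius theorem for irreducible nonnegative matrices for this fact. The hypothesis $S\neq T$ is not needed for the non-strict bound; it would only matter for the sharper conclusion $|\gamma|<\lambda_{max}$. To get that, one would argue that equality forces $S|x|=T|x|$ with $|x|>0$, which contradicts $S\neq T$.
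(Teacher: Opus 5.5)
Your proof is correct for the stated non-strict bound. The paper gives no proof of this statement; it quotes it from the literature. Your argument is the standard Wielandt-type comparison used there: $|\gamma|\,|x|\le S|x|\le T|x|$ componentwise, then pairing with the strictly positive left Perron vector of $T$. So there is no competing route in the paper to compare against.

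Two small remarks are worth noting.

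First, in your sketch of the strict version, the claim $|x|>0$ is not automatic and needs a reason. It comes from the equality case itself. If $|\gamma|=\lambda_{max}$, then $y^{\top}\bigl(T|x|-|\gamma|\,|x|\bigr)=0$ with $y>0$, which forces $T|x|=\lambda_{max}|x|$. A nonzero nonnegative eigenvector of an irreducible nonnegative matrix is strictly positive, for example because $(I+T)^{m-1}$ is entrywise positive, where $m$ is the order of $T$. Only then does $(T-S)|x|=0$ contradict $S\neq T$.

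Second, your route uses irreducibility only to make $y$ strictly positive. The non-strict bound actually holds for every nonnegative $T$. Indeed, $0\le S^{k}\le T^{k}$ entrywise gives $\rho(S)\le\rho(T)$ by Gelfand's formula with any monotone norm, such as the maximum row sum. Moreover, $\rho(T)$ is always an eigenvalue of a nonnegative $T$ and hence its largest real eigenvalue. The paper only ever uses the non-strict form, in the Case 3, $l=2$ comparison of $\rho_{\epsilon}$. So irreducibility and the hypothesis $S\neq T$ genuinely matter only for the strict inequality.
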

\begin{theorem}(Schur Complement formula)\label{Schurs complement formula}$\textnormal{\cite{cvetkovic1980spectra}}$
Let $U, V, W$ and $X$ be matrices with $U$ invertible. Let $$S=\begin{pmatrix}
    U&V\\
    W&X
\end{pmatrix}.$$
Then, $det(S)=det(U)det(X-WU^{-1}V).$
\end{theorem}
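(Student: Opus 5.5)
The plan is to prove the Schur Complement formula by factoring $S$ into a product of block triangular matrices and using multiplicativity of the determinant. Here $U$ is an invertible $p\times p$ matrix, $X$ is $q\times q$, and $V$, $W$ have compatible sizes $p\times q$ and $q\times p$. Set $Z=X-WU^{-1}V$, which is a $q\times q$ matrix.

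The first step is to verify the factorization
\begin{equation*}
S=\begin{pmatrix} U&V\\ W&X\end{pmatrix}
=\begin{pmatrix} I_{p}&0\\ WU^{-1}&I_{q}\end{pmatrix}
\begin{pmatrix} U&V\\ 0&Z\end{pmatrix}.
\end{equation*}
This is a direct block multiplication.
\begin{itemize}
\item The top row gives $(U,\;V)$.
\item The bottom-left block is $WU^{-1}U=W$.
\item The bottom-right block is $WU^{-1}V+Z=X$.
\end{itemize}
Taking determinants and using $\det(AB)=\det(A)\det(B)$ reduces the claim to computing the determinant of each factor.

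The second step is the block triangular determinant fact:
\begin{equation*}
\det\begin{pmatrix} A&B\\ 0&C\end{pmatrix}=\det(A)\det(C)
\end{equation*}
for square $A,C$, and similarly for the block lower triangular case. I would prove it with the permutation expansion $\det M=\sum_{\sigma}\operatorname{sgn}(\sigma)\prod_i m_{i\sigma(i)}$.
\begin{itemize}
\item Take $M$ block upper triangular, with the zero block in rows $p+1,\dots,p+q$ and columns $1,\dots,p$.
\item A permutation $\sigma$ gives a nonzero product only if it maps $\{p+1,\dots,p+q\}$ into itself.
\item Then $\sigma$ also preserves $\{1,\dots,p\}$, so it splits as $\sigma=\sigma_1\sigma_2$ with $\operatorname{sgn}(\sigma)=\operatorname{sgn}(\sigma_1)\operatorname{sgn}(\sigma_2)$.
\item The sum therefore factors as $\det(A)\det(C)$.
\end{itemize}
An alternative is induction on $p$ using Laplace expansion along the first column.

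Applying this fact to the two factors finishes the proof.
\begin{itemize}
\item The lower unitriangular factor has determinant $\det(I_p)\det(I_q)=1$.
\item The upper factor has determinant $\det(U)\det(X-WU^{-1}V)$.
\end{itemize}
Hence $\det(S)=\det(U)\det(X-WU^{-1}V)$.

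I do not expect a genuine obstacle. The only care needed is in the block triangular determinant lemma, where one must argue cleanly that the permutations supported on the nonzero pattern are exactly those preserving the two index blocks. Invertibility of $U$ is used only to form $WU^{-1}$ in the factorization.
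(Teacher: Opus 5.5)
Your proof is correct and complete. The block factorization checks out entry by entry. Your permutation-expansion argument for the block triangular determinant is sound: since $\sigma$ is injective on the finite set $\{p+1,\dots,p+q\}$, it must preserve both index blocks.

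The paper does not prove this statement itself. It quotes the formula from Cvetkovi\'c--Doob--Sachs and uses it only as a tool, in the computation of the spectrum of $K_{2n}\circ_{n}\cdots\circ_{n}K_{2n}$ with the scalar $U=n-1-x$. There one needs $x\neq n-1$ and then extends the resulting polynomial identity to all $x$. Your block-LU argument is the standard proof that the citation stands in for.
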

\begin{definition}(Equitable partition)$\textnormal{\cite{brouwer2011spectra}}$
 Suppose that $M$ is a square matrix whose rows and columns are indexed by $X=\{1,2,\ldots, n\}.$ Let $\Pi$ be a partition of $X,$  $X=X_{1}\cup X_{2} \cup \cdots \cup X_{m}$. The matrix $M$ can be written as 
\begin{equation*}
M=\begin{pmatrix}
    M_{1,1} & M_{1,2}&\cdots &M_{1,m}\\
    M_{2,1} &M_{2,2} &\cdots &M_{2,m}\\
    \vdots&\vdots&\ddots&\vdots\\
    M_{m,1}&M_{m,2}&\cdots&M_{m,m}
\end{pmatrix}
\end{equation*}
where $M_{i,j}$ is the submatrix of $M$ whose rows and columns are indexed by $X_{i}$ and $X_{j},$ respectively, for $1\leq i,j\leq m.$ Let $b_{ij}$ be the average row sum of $M_{i,j}.$ Then, $Q_{m}=(b_{ij})_{m\times m}$ is the quotient matrix of $M$ with respect to the partition $\Pi.$ If the row sum of each block $M_{i,j}$ is a constant, then the partition is an equitable partition. 
\end{definition}

\begin{theorem}\label{equitable quotient matrix eigenvalue thm}$\textnormal{\cite{brouwer2011spectra}}$
Let $M$ be a square matrix and $\Pi$ be an equitable partition of $M$ with quotient matrix $B$. Then, the multiset of eigenvalues of $B$ is contained in the multiset of eigenvalues of $M.$
\end{theorem}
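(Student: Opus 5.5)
The plan is to encode the partition by its characteristic matrix and show that the characteristic polynomial of $B$ divides that of $M$. Let $M$ be $n\times n$ and let $\Pi: X=X_{1}\cup\cdots\cup X_{m}$ be the equitable partition. Let $P$ be the $n\times m$ matrix with $P_{ij}=1$ if $i\in X_{j}$ and $0$ otherwise. Its columns have pairwise disjoint nonempty supports, so they are linearly independent and $P$ has rank $m$.

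The first step is to convert the equitable condition into the matrix identity $MP=PB$. Fix $i\in X_{k}$. The $(i,j)$ entry of $MP$ is $\sum_{t\in X_{j}} M_{it}$, which is the $i$-th row sum of the block $M_{k,j}$. By equitability this row sum is constant over the rows of $M_{k,j}$, so it equals the average row sum $b_{kj}$. The $(i,j)$ entry of $PB$ is also $b_{kj}$, since row $i$ of $P$ is the indicator of the class $k$. Hence $MP=PB$. This already gives the set-level statement: if $Bx=\lambda x$ with $x\neq 0$, then $M(Px)=PBx=\lambda Px$ and $Px\neq 0$, so $\lambda$ is an eigenvalue of $M$.

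To get the multiset statement, extend the columns of $P$ to a basis of $\mathbb{R}^{n}$ (or $\mathbb{C}^{n}$) by adjoining an $n\times(n-m)$ matrix $R$, so that $Q=(P\mid R)$ is invertible. Since $MP=PB$, the column space of $P$ is $M$-invariant. Therefore
\[
Q^{-1}MQ=\begin{pmatrix} B & C\\ 0 & D\end{pmatrix}
\]
for some matrices $C$ and $D$. The first block column is exactly $B$, because $Q^{-1}MP=Q^{-1}PB$ and $Q^{-1}P=\binom{I_{m}}{0}$. Taking determinants of $xI-Q^{-1}MQ$ then gives $\det(xI-M)=\det(xI-B)\det(xI-D)$. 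So the characteristic polynomial of $B$ divides that of $M$, and the eigenvalues of $B$ are contained, with multiplicity, in those of $M$.

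The only delicate point is the multiplicity claim. The eigenvector argument alone does not handle repeated eigenvalues of $B$, for instance when $B$ is not diagonalizable. This is why I would rely on the block-triangularization through the invariant subspace rather than on the eigenvector map $x\mapsto Px$.
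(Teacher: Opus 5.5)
Your proof is correct. It establishes the statement for an arbitrary square $M$, which is how the paper states it.

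The paper gives no argument of its own; it cites Brouwer--Haemers. Their setting is a real symmetric matrix, and the standard reasoning there is the eigenvector lift you mention: from $MP=PB$, every eigenvector $v$ of $B$ yields the eigenvector $Pv\neq 0$ of $M$. That route gets the multiset inclusion only because $B$ is diagonalizable in the symmetric setting. Indeed, $B$ is similar to the symmetric matrix $K^{1/2}BK^{-1/2}$, where $K=P^{T}P$. So a basis of eigenvectors of $B$ is carried by the injective map $P$ to independent eigenvectors of $M$.

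Your block-triangularization over the $M$-invariant column space of $P$ is the Godsil--Royle characteristic-polynomial argument. It gives $\det(xI-B)\mid\det(xI-M)$ directly, with no symmetry or diagonalizability hypothesis. That is the version the paper actually needs, because it applies the theorem with $M$ equal to the nonsymmetric matrices $B_{4}$, $B_{5}$, $B_{6}$. The lifting argument would still work there, since $\tilde{B}_{4}$, $\tilde{B}_{5}$, $\tilde{B}_{6}$ are also quotients of the symmetric matrix $\epsilon(G)$ and hence diagonalizable. However, that requires an extra observation that your argument avoids.

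Your remark that the lift alone misses algebraic multiplicity for non-diagonalizable $B$ is exactly right. The trivial partition into singletons, with $M$ a Jordan block, already shows this.
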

\begin{theorem}\label{spectral radius of a matrix and its equitatable quotient matrix are the same}$\textnormal{\cite{zbMATH07080161}}$
 Let $B$ be the equitable quotient matrix of $M.$ If  $B$ is irreducible and nonnegative matrix, then $M$ and $B$ have the same spectral radius.

\end{theorem}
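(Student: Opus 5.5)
We plan to compare a Perron vector of $B$ with an eigenvector of $M$ through the characteristic matrix of the partition. As in the applications later in the paper (where $M$ is an eccentricity matrix), we take $M$ to be a nonnegative matrix. This hypothesis is genuinely needed: if $M$ had negative entries but constant block row sums, $B$ could be nonnegative while $M$ has larger spectral radius.

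Let $\Pi: X=X_{1}\cup\cdots\cup X_{m}$ be the equitable partition. Let $P$ be the $n\times m$ characteristic matrix of $\Pi$, whose $j$-th column is the indicator vector of $X_{j}$. Equitability says that for every $i\in X_{k}$ we have $\sum_{t\in X_{j}} M_{it}=b_{kj}$. In matrix form this reads
$$MP=PB.$$
This is also the identity behind Theorem \ref{equitable quotient matrix eigenvalue thm}.

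First, since $B$ is nonnegative and irreducible, the Perron--Frobenius theory gives an eigenvector $y>0$ with $By=\rho(B)y$. We lift it by setting $x=Py$, so that each coordinate $x_{i}$ equals $y_{k}$ when $i\in X_{k}$. Then $x>0$ entrywise, and
$$Mx=MPy=PBy=\rho(B)\,x.$$
Hence $\rho(B)$ is an eigenvalue of $M$ with a strictly positive eigenvector, and therefore $\rho(M)\geq\rho(B)$.

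Second, we show $\rho(M)\leq\rho(B)$ by a diagonal similarity. Let $D=\operatorname{diag}(x_{1},\ldots,x_{n})$, which is invertible because $x>0$. The matrix $D^{-1}MD$ has entries $M_{ij}x_{j}/x_{i}\geq 0$. Its $i$-th row sum is $(Mx)_{i}/x_{i}=\rho(B)$. Therefore $\|D^{-1}MD\|_{\infty}=\rho(B)$. Since spectral radius is a similarity invariant and is bounded by any induced norm,
$$\rho(M)=\rho(D^{-1}MD)\leq\rho(B).$$
Combining the two inequalities gives $\rho(M)=\rho(B)$.

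The only delicate point is producing a strictly positive eigenvector of $M$. Irreducibility of $M$ itself is not assumed, so Perron--Frobenius cannot be applied to $M$ directly. The lift $x=Py$ of the Perron vector of the irreducible matrix $B$ is what supplies positivity, and it is exactly what makes the row-sum (norm) argument work.
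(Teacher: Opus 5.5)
The paper does not prove this statement. It only quotes it from the cited reference and uses it for $M=\epsilon(G)$, so there is no internal proof to compare with.

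Your argument is correct and complete once you add the hypothesis $M\geq 0$, as you do. The identity $MP=PB$ holds. The lift $x=Py$ of the Perron vector of $B$ is strictly positive and satisfies $Mx=\rho(B)x$. The diagonal-similarity bound $\rho(M)=\rho(D^{-1}MD)\leq\|D^{-1}MD\|_\infty=\rho(B)$ is valid. Together these form the standard proof; the second half is the classical fact that an eigenvalue of a nonnegative matrix with a positive eigenvector equals its spectral radius.

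You are also right that nonnegativity of $M$ cannot be dropped. Take $M=\begin{pmatrix}2&-1\\-1&2\end{pmatrix}$ with the one-cell partition. Then $B=(1)$, which is nonnegative and irreducible, yet $\rho(M)=3$. So the statement as printed in the paper is missing this hypothesis. This is harmless for its application, since eccentricity matrices are nonnegative.

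Conversely, irreducibility of $B$ enters your proof only to produce $y>0$. It can be removed: apply your argument to $M+\varepsilon J$, whose partition is still equitable with an entrywise positive quotient matrix, then let $\varepsilon\to 0$ and use continuity of the spectral radius.
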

The \textit{eccentric graph}, $G^{e}$ of a connected graph $G$ is defined as the graph having the same vertex set as $G,$ that is, $V(G^{e})=V(G),$ where two vertices are adjacent if and only if the distance between them equals the minimum of their eccentricities.
\begin{lemma}$\textnormal{\cite{MR4522999}}$\label{eccentric graph}
   The graph $G^{e}$ is connected if and only if the matrix $\epsilon(G)$ is an irreducible matrix.
\end{lemma}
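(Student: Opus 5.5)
The statement to prove is Lemma \ref{eccentric graph}: the eccentric graph $G^{e}$ is connected if and only if $\epsilon(G)$ is irreducible. The plan is to reduce this to the standard characterization of irreducibility for nonnegative symmetric matrices via their associated graphs.

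\textbf{Step 1: identify the pattern of $\epsilon(G)$.} Since distances are positive for distinct vertices, $\epsilon(G)$ is a nonnegative symmetric matrix with zero diagonal. Its $(i,j)$ entry is nonzero exactly when $d(v_i,v_j)=\min\{e(v_i),e(v_j)\}$. By definition, this is exactly the condition for $v_iv_j$ to be an edge of $G^{e}$. Hence the zero pattern of $\epsilon(G)$ coincides with that of the adjacency matrix $A(G^{e})$.

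\textbf{Step 2: invoke the graph characterization of irreducibility.} A nonnegative square matrix $M$ is irreducible if and only if its associated digraph is strongly connected. Here the digraph has an arc $i\to j$ whenever $m_{ij}>0$. Equivalently, $M$ is reducible exactly when some permutation $P$ gives $P^{T}MP=\begin{pmatrix} B & C\\ 0 & D\end{pmatrix}$ with square $B,D$.

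For a symmetric $M$, the digraph is just the undirected graph with each edge doubled. Strong connectivity therefore reduces to ordinary connectivity. Applied to $\epsilon(G)$, this graph is $G^{e}$ by Step 1.

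\textbf{Step 3: prove the equivalence directly from the block form.} This avoids relying on the digraph theorem.

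If $G^{e}$ is disconnected, take a component with vertex set $S$ and order the vertices of $S$ first. No edges of $G^{e}$ join $S$ to $V\setminus S$, so the off-diagonal blocks of $\epsilon(G)$ vanish. The matrix then has the block form above with $C=0$, so it is reducible.

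Conversely, suppose a permutation brings $\epsilon(G)$ to that block form. The zero lower-left block means no entry $\epsilon_{ij}$ is positive with $i$ in the second index set and $j$ in the first. By symmetry, the upper-right block $C$ is also zero. Thus no edge of $G^{e}$ joins the two nonempty index sets, and $G^{e}$ is disconnected.

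\textbf{Main obstacle.} The only delicate point is handling $n=1$. By convention a $1\times 1$ matrix is irreducible and a single vertex is connected, so the equivalence holds trivially there. Beyond that, the argument is routine bookkeeping.
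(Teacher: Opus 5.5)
Your argument is correct. The paper gives no proof of this lemma; it simply imports it from the cited reference. The standard justification is exactly yours: for $n\ge 2$, the off-diagonal support of the nonnegative symmetric matrix $\epsilon(G)$ is the edge set of $G^{e}$, so irreducibility of $\epsilon(G)$ is equivalent to connectivity of $G^{e}$. Your treatment of $n=1$ is convention-dependent, since some texts (e.g.\ Horn and Johnson) call the $1\times 1$ zero matrix reducible, but that case never arises in the paper.
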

\section{Eccentricity spectrum of coalescence of graphs}
In this section, we investigate the eccentricity spectrum of the coalescence of complete graphs. We compute the eccentricity spectrum, inertia, and eccentricity energy of graphs of the form $$G=K_{2n}\circ_{n}K_{2n}\circ_{n}\cdots \circ_{n}K_{2n}, ( l \text{ copies of } K_{2n}), 
l\geq 2, n\geq 2.$$
We observe that the graph  $G=K_{a_{1}}\circ_{k}K_{a_{2}}\circ_{k}\cdots \circ_{k}K_{a_{l}}$, with $a_{i}\geq 3,$ for $i=1,2,\ldots,l $ has diameter $2.$

\begin{lemma}\label{irreducibility of general coalesence of graphs}
Let $G=K_{a_{1}}\circ_{k}K_{a_{2}}\circ_{k}\cdots \circ_{k}K_{a_{l}}$, $l\geq 2,$ $a_{i}\geq 3,$ for $i=1,2,\ldots,l$, then
$\epsilon(K_{a_{1}}\circ_{k}K_{a_{2}}\circ_{k}\cdots \circ_{k}K_{a_{l}})$ is irreducible.
\end{lemma}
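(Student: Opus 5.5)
By Lemma \ref{eccentric graph}, it suffices to show that the eccentric graph $G^{e}$ of $G=K_{a_{1}}\circ_{k}K_{a_{2}}\circ_{k}\cdots \circ_{k}K_{a_{l}}$ is connected. So the plan is to compute all eccentricities, determine the edges of $G^{e}$ explicitly, and then check connectivity directly.

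\textbf{Step 1: structure and eccentricities.} Let $C$ be the common clique $K_{k}$, which has $k\geq 1$ vertices. For $i=1,\ldots,l$ let $P_{i}=V(K_{a_{i}})\setminus C$. Each $P_{i}$ is nonempty, since $k<a_{i}$. Two vertices are adjacent exactly when they lie in the same copy $K_{a_{i}}$. Hence a vertex of $C$ is adjacent to every other vertex and has eccentricity $1$. A vertex $u\in P_{i}$ is at distance $2$ from every vertex of $P_{j}$ with $j\neq i$ (via $C$). Since $l\geq 2$, such vertices exist, so $e(u)=2$. This also confirms that $G$ has diameter $2$.

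\textbf{Step 2: edges of $G^{e}$.} We check each type of pair against the condition $d(x,y)=\min\{e(x),e(y)\}$.
\begin{itemize}
\item For $c\in C$ and any other vertex $x$, we have $d(c,x)=1=\min\{e(c),e(x)\}$. So every vertex of $C$ is adjacent in $G^{e}$ to all other vertices.
\item For $u\in P_{i}$ and $v\in P_{j}$ with $i\neq j$, we have $d(u,v)=2=\min\{2,2\}$. So $uv$ is an edge of $G^{e}$.
\item For $u,v\in P_{i}$, we have $d(u,v)=1<2$. So $uv$ is not an edge.
\end{itemize}
Thus $G^{e}$ is the join of $K_{k}$ (on $C$) with the complete multipartite graph with parts $P_{1},\ldots,P_{l}$.

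\textbf{Step 3: connectivity.} Since $k\geq 1$, any vertex $c\in C$ is adjacent in $G^{e}$ to all other vertices. Therefore $G^{e}$ is connected, and Lemma \ref{eccentric graph} gives that $\epsilon(G)$ is irreducible.

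\textbf{Main obstacle.} The only point needing care is that the hypotheses make the eccentricities well defined as claimed. We need $l\geq 2$ and each $P_{i}\neq\emptyset$, so that non-clique vertices really have eccentricity $2$. The condition $a_{i}\geq 3$ is not actually needed for this argument; $k<a_{i}$ suffices.

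If one prefers a direct matrix argument, the same structure shows that $\epsilon(G)$ has no zero off-diagonal entry in any row indexed by $C$. Any permutation to block upper triangular form would then have to place all of $C$ together with every other vertex in one block, which rules out reducibility.
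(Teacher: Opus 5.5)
Your proposal is correct and follows the paper's argument. Both reduce the claim to connectivity of $G^{e}$ via Lemma~\ref{eccentric graph}, and both use that every vertex of the common clique has eccentricity $1$ and is therefore adjacent in $G^{e}$ to all other vertices. Your explicit description of $G^{e}$ and your remark that only $1\leq k<a_{i}$ is needed are correct but not required.
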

\begin{proof}
Let $$V(G)=V(K_{k})\cup V(K_{a_{1}}\backslash K_{k})\cup V(K_{a_{2}}\backslash K_{k})\cdots \cup V(K_{a_{l}}\backslash K_{k})=V(G^{e}),$$ where $K_{k}$ is the complete subgraph identified in each of the complete graphs $K_{a_{i}},1\leq i \leq l.$\\
For any vertex $v \in V(K_{k}),$ we have $e(v)=1.$ That is, for any $v_{1},v_{2}\in V(K_{k}),$ $$d(v_{1},v_{2})=1=min\{e(v_{1}),e(v_{2})\}.$$ Hence the vertex set $V(K_{k})$ induces a complete subgraph in $G^{e}$. \\
Let $u\in V(G)\backslash V(K_{k}).$
 Then,  for every $u\in V(K_{k}),$    $$d(u,v)=1=min\{e(u),e(v)\}.$$
 Therefore, each vertex outside $V(K_{k})$ is adjacent to every vertex of $V(K_{k})$ in $G^{e}.$
 Consequently, the eccentric graph $G^{e}$ is connected.
 By Lemma \ref{eccentric graph}, it follows that the eccentricity matrix $\epsilon(G)$ is irreducible.

\end{proof}
\begin{theorem}\label{k coalesence,general,l times }
   Let $G=K_{a_{1}}\circ_{k}K_{a_{2}}\circ_{k}\cdots \circ_{k}K_{a_{l}}$, $l\geq 2$, $a_{i}\geq 3,$ for $i=1,2,\ldots,l. $ Then, $\epsilon-$spectrum of $G$ consists of: \begin{enumerate}
       \item $-1$ with multiplicity at least $k-1.$
       \item $0$ with multiplicity at least $(a_{1}-k-1)+(a_{2}-k-1)+\cdots+(a_{l}-k-1).$
       \item the eigenvalues of the matrix $\begin{pmatrix}
           k-1& a_{1}-k & a_{2}-k &\cdots &a_{l}-k \\
           k  & 0  & 2(a_{2}-k)   &\cdots &2(a_{l}-k)\\
           k  & 2(a_{1}-k)    &0  &\cdots&2(a_{l}-k)\\
           \vdots&\vdots&\vdots   &\ddots&\vdots\\
           k&2(a_{1}-k)&2(a_{2}-k)&\cdots&0
       \end{pmatrix}.$
   \end{enumerate} 
\end{theorem}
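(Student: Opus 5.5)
We first write down the eccentricity matrix explicitly. Partition $V(G)=V_0\cup V_1\cup\cdots\cup V_l$, where $V_0=V(K_k)$ is the common clique and $V_i=V(K_{a_i})\setminus V(K_k)$ for $1\le i\le l$, so $|V_0|=k$ and $|V_i|=a_i-k$. As noted before Lemma \ref{irreducibility of general coalesence of graphs}, each vertex of $V_0$ has eccentricity $1$ and each vertex outside $V_0$ has eccentricity $2$. Hence two vertices of $V_0$ give entry $1$. A vertex of $V_0$ and a vertex of $V_i$ give entry $1=\min\{1,2\}$. Two distinct vertices of the same $V_i$ are at distance $1<2$, so they give entry $0$. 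Vertices of $V_i$ and $V_j$ with $i\neq j$ are at distance $2=\min$, so they give entry $2$. In block form,
\begin{equation*}
\epsilon(G)=\begin{pmatrix}
J_k-I_k & J & J &\cdots & J\\
J & O & 2J &\cdots & 2J\\
J & 2J & O &\cdots & 2J\\
\vdots&\vdots&\vdots&\ddots&\vdots\\
J & 2J & 2J &\cdots & O
\end{pmatrix},
\end{equation*}
where each $J$ is an all-ones block of the appropriate size. Every block has constant row sums, so $\Pi=\{V_0,V_1,\ldots,V_l\}$ is an equitable partition. Its quotient matrix is exactly the $(l+1)\times(l+1)$ matrix in item 3. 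By Theorem \ref{equitable quotient matrix eigenvalue thm}, the eigenvalues of this matrix are eigenvalues of $\epsilon(G)$, which gives item 3.

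For items 1 and 2 we construct eigenvectors orthogonal to the characteristic vectors of the partition. For item 1, take $x\in\mathbb{R}^{k}$ with $\mathbf{1}^Tx=0$, placed on $V_0$ and extended by zeros. Every $J$ block annihilates it and $(J_k-I_k)x=-x$, so $\epsilon(G)x=-x$. This gives $k-1$ independent eigenvectors for $-1$. For item 2, take $y\in\mathbb{R}^{a_i-k}$ with $\mathbf{1}^Ty=0$, placed on $V_i$ and zero elsewhere. The diagonal block is $O$ and all off-diagonal blocks in column $i$ are multiples of $J$, so $\epsilon(G)y=0$. This gives $a_i-k-1$ independent vectors for each $i$, and $\sum_i(a_i-k-1)$ in total for eigenvalue $0$.

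Finally, we check that the count is consistent and that the listed vectors are linearly independent. The eigenvectors from items 1 and 2 are supported on different parts, and each is orthogonal to the characteristic vectors of the parts. The lifts of the quotient eigenvectors are constant on each part, so they lie in the span of the characteristic vectors, which is orthogonal to all the vectors from items 1 and 2. The dimensions add up: $(k-1)+\sum_i(a_i-k-1)+(l+1)=k+\sum_i(a_i-k)=|V(G)|$. So the three families together account for the entire spectrum.

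The only real obstacle is verifying the eccentricities and distances carefully; the rest is routine. The phrase ``at least'' in items 1 and 2 absorbs any coincidences between these eigenvalues and eigenvalues of the quotient matrix. The hypothesis $a_i\ge 3$ (in fact $a_i>k$) guarantees that each $V_i$ is nonempty, so the diameter is $2$.
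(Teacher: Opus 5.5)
Your proposal is correct and follows essentially the same route as the paper. Both use the same block form of $\epsilon(G)$, the same sum-zero eigenvectors on the parts for $-1$ and $0$, and the observation that the remaining $l+1$ eigenvalues come from the span of the characteristic vectors of the parts, i.e.\ from the quotient matrix; your explicit use of the equitable-partition theorem and the dimension count $(k-1)+\sum_i(a_i-k-1)+(l+1)=|V(G)|$ only make the paper's orthogonal-complement argument more explicit.
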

\begin{proof}
    By suitable labeling of vertices, we get 
    $$\epsilon(G)=\begin{pmatrix}
        (J-I)_{k\times k}&J_{k \times a_{1}-k}&J_{k\times a_{2}-k}&\cdots &J_{k\times a_{l}-k}\\
        J_{a_{1}-k\times k} & 0_{a_{1}-k\times a_{1}-k}& 2J_{a_{1}-k\times a_{2}-k}&\cdots &2J_{a_{1}-k\times a_{l}-k}\\
         J_{a_{2}-k \times k }& 2J_{a_{2}-k\times a_{1}-k}&  0_{a_{2}-k\times a_{2}-k}& \cdots &2J_{a_{2}-k\times a_{l}-k}\\
         \vdots &  \vdots &  \vdots &\ddots&\vdots\\
         J_{a_{l}-k \times k} & 2J_{a_{l}-k\times a_{1}-k}&2J_{a_{l}-k\times a_{2}-k}& \cdots &0_{a_{l}-k\times a_{l}-k}
         \end{pmatrix}_{(k+\sum_{i=1}^{l}(a_{i}-k))\times (k+\sum_{i=1}^{l}(a_{i}-k)) }$$
For $i=2,3,\ldots ,k,$ consider the vector 
 $$x^{(i)}=\big(  x_{1,1},x_{1,2},\ldots,x_{1,k},x_{2,1},x_{2,2},\ldots,x_{2,a_{1}-k}, x_{3,1},x_{3,2},\ldots,x_{3,a_{2}-k},\cdots, x_{l+1,1},x_{l+1,2},\ldots,x_{l+1,a_{l}-k}      \big)$$
such that $x_{1,1}=1,x_{1,i}=-1$ and $x_{u,v}=0,$ for all $x_{u,v}\neq  x_{1,1},x_{1,i}.$
Then, $\epsilon(G)x^{(i)}=(-1)x^{(i)}.$ 
Thus, $-1$ is an eigenvalue of $\epsilon(G)$ with multiplicity at least $k-1.$\\
\\
For $j=2,3,\ldots ,l+1,$ consider the vector 
$$y^{(j,t)}=\big( y_{1,1},y_{1,2},\ldots,y_{1,k},y_{2,1},y_{2,2},\ldots,y_{2,a_{1}-k}, y_{3,1},y_{3,2},\ldots,y_{3,a_{2}-k},\cdots, y_{l+1,1},y_{l+1,2},\ldots,y_{l+1,a_{l}-k}      \big)$$
such that $y_{j,1}=1,y_{j,t}=-1,$ for $t=2,3,\ldots ,a_{j-1}-k,$ and $y_{u,v}=0,$ for all $y_{u,v}\neq y_{j,1},y_{j,t}.$
Then, $\epsilon(G)y^{(j,t)}=0.$ 
Thus, $0$ is an eigenvalue of $\epsilon(G)$ with multiplicity at least $(a_{1}-k-1)+(a_{2}-k-1)+\cdots+(a_{l}-k-1).$\\
Note that all the eigenvectors constructed so far are orthogonal to\\
$$\begin{pmatrix}
    J_{k\times 1}\\
    0_{a_{1}-k\times 1}\\
    0_{a_{2}-k\times 1}\\
    \vdots\\
    0_{a_{l}-k\times 1}
\end{pmatrix},
\begin{pmatrix}
    0_{k\times 1}\\
    J_{a_{1}-k\times 1}\\
    0_{a_{2}-k\times 1}\\
    \vdots\\
    0_{a_{l}-k\times 1}
\end{pmatrix},
\begin{pmatrix}
    0_{k\times 1}\\
    0_{a_{1}-k\times 1}\\
    J_{a_{2}-k\times 1}\\
    \vdots\\
    0_{a_{l}-k\times 1}
\end{pmatrix},
\ldots,\begin{pmatrix}
    0_{k\times 1}\\
    0_{a_{1}-k\times 1}\\
    0_{a_{2}-k\times 1}\\
    \vdots\\
    J_{a_{l}-k\times 1}
\end{pmatrix}.
$$

Since  $\epsilon(G)$ is a  symmetric matrix of order $k+\sum_{i=1}^{l}(a_{i}-k)$, $\mathbb{R}^{(k+\sum_{i=1}^{l}(a_{i}-k))}$ has an orthogonal basis consisting of eigenvectors of $\epsilon(G).$
Hence, the remaining $l+1$ eigenvectors are spanned by these $l+1$ vectors.
Thus, if $\sigma$ is an eigenvalue of $\epsilon(G)$ with an eigenvector $\zeta$ of the form $\begin{pmatrix}
    s_{1} J_{k\times 1}\\
    s_{2}J_{a_{1}-k\times 1}\\
    s_{3}J_{a_{2}-k\times 1}\\
    \vdots\\
    s_{l+1}J_{a_{l}-k\times 1}
    \end{pmatrix},$
 where $s_{i}\in \mathbb{R},$   
then from $\epsilon(G)\zeta=\sigma \zeta$ it follows  that the remaining $l+1$ eigenvalues are the eigenvalues of the matrix
$B_{1}=\begin{pmatrix}
           k-1& a_{1}-k & a_{2}-k &\cdots &a_{l}-k \\
           k  & 0  & 2(a_{2}-k)   &\cdots &2(a_{l}-k)\\
           k  & 2(a_{1}-k)    &0  &\cdots&2(a_{l}-k)\\
           \vdots&\vdots&\vdots   &\ddots&\vdots\\
           k&2(a_{1}-k)&2(a_{2}-k)&\cdots&0
       \end{pmatrix}.$


\end{proof}

\begin{corollary}
Let $G=K_{a_{1}}\circ_{k} K_{a_{2}},$ where $a_{1},a_{2}\geq 3.$ Then,  inertia of $ \epsilon(G),$ $$(n_{-}(\epsilon(G), n_{0}(\epsilon(G)),n_{+}(\epsilon(G))=(k+1,a_{1}+a_{2}-2k-2,1).$$    
\end{corollary}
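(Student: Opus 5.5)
By Theorem \ref{k coalesence,general,l times } with $l=2$, the $\epsilon$-spectrum of $G=K_{a_1}\circ_k K_{a_2}$ consists of $-1$ with multiplicity at least $k-1$ and $0$ with multiplicity at least $(a_1-k-1)+(a_2-k-1)=a_1+a_2-2k-2$. The remaining three eigenvalues are those of the quotient matrix
$$B=\begin{pmatrix} k-1 & a_1-k & a_2-k\\ k & 0 & 2(a_2-k)\\ k & 2(a_1-k) & 0\end{pmatrix}.$$
These counts add up to $(k-1)+(a_1+a_2-2k-2)+3=a_1+a_2-k$, the order of $G$. So the inertia is determined once I show that $B$ has exactly one positive and two negative eigenvalues, and that none of them is $0$ or coincides with the listed $0$'s. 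The claim then reads $n_-=(k-1)+2=k+1$, $n_0=a_1+a_2-2k-2$ and $n_+=1$.

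\textbf{Key step: the signs of the eigenvalues of $B$.} Write $p=a_1-k\ge 1$ and $q=a_2-k\ge 1$; these are positive because $k<\min\{a_1,a_2\}$. First I would note that $B$ is similar to a symmetric matrix. Indeed, $B=D^{-1}S$-type scaling with $D=\mathrm{diag}(k,p,q)$ makes $DB$ symmetric, so all eigenvalues of $B$ are real. I then compute the characteristic polynomial
$$\phi(x)=\det(xI-B)=x^3-(k-1)x^2+c_1x+c_0.$$
The constant term is $c_0=-\det B$. Expanding along the first row gives
$$\det B=(k-1)(0-4pq)-p(0-2kq)+q(2kp-0)=4pq(1-k)+4kpq=4pq>0.$$
Thus $\det B>0$ and no eigenvalue of $B$ is zero. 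The trace is $k-1$. The sum of the $2\times2$ principal minors is
$$c_1=-kp-kq-4pq<0.$$

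\textbf{Counting signs.} Since the eigenvalues $\lambda_1,\lambda_2,\lambda_3$ are real with product $4pq>0$, either all three are positive or exactly one is. All three positive would force $\sum_{i<j}\lambda_i\lambda_j>0$. But $\sum_{i<j}\lambda_i\lambda_j=c_1<0$, so exactly one eigenvalue of $B$ is positive and two are negative. As a consistency check, Lemma \ref{irreducibility of general coalesence of graphs} and Theorem \ref{spectral radius of a matrix and its equitatable quotient matrix are the same} identify the positive one as the spectral radius.

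\textbf{Main obstacle.} The delicate point is that the multiplicities in Theorem \ref{k coalesence,general,l times } are only lower bounds. To make them exact, I would use the fact from its proof that the remaining $3$ eigenvectors span the block-constant subspace, orthogonal to all listed eigenvectors. The full spectrum is therefore exactly the listed values together with the spectrum of $B$. Since the eigenvalues of $B$ are nonzero, the zero multiplicity is exactly $a_1+a_2-2k-2$. Negative eigenvalues of $B$ equal to $-1$ do not affect the inertia count, so the stated triple follows.
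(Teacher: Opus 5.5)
Your proposal is correct and follows essentially the same route as the paper. You specialize the general $k$-coalescence spectrum theorem to $l=2$, then apply Vieta to the characteristic cubic of the $3\times 3$ quotient matrix. From $\det B=4(a_1-k)(a_2-k)>0$ and the negative sum of pairwise products $-k(a_1-k)-k(a_2-k)-4(a_1-k)(a_2-k)$, you conclude that $B$ has exactly one positive and two negative eigenvalues.

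You also make explicit two points the paper leaves implicit. First, the eigenvalues of $B$ are real, via the symmetrization by $\mathrm{diag}(k,a_1-k,a_2-k)$. Second, the multiplicities are exact, because the counts exhaust the order $a_1+a_2-k$ and $\det B\neq 0$.
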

\begin{proof}
Consider the matrix $B_{2}=\begin{pmatrix}
                  k-1&a_{1}-k&a_{2}-k\\
                  k  &0&2(a_{2}-k)\\
                  k &2(a_{1}-k)&0
                    \end{pmatrix}.$    
The characteristic polynomial of  $B_{2}$ is
$P_{B_{2}}(x)=x^{3}-(k-1)x^{2}-\big(  4(a_{1}-k)(a_{2}-k)+k(a_{2}-k)+k(a_{1}-k) \big)x-\big(4(a_{1}-k)(a_{2}-k)\big).$
Let $\alpha_{1},\alpha_{2},\alpha_{3}$ be the roots of $P_{B_{2}}(x).$ Then, we have the following relation between the roots and the  coefficients of $P_{B_{2}}(x):$\\
\begin{align*}
    \alpha_{1}+\alpha_{2}+\alpha_{3}=&k-1,\\
    \alpha_{1}\alpha_{2}+\alpha_{1}\alpha_{3}+\alpha_{2}\alpha_{3}=& -\big(  4(a_{1}-k)(a_{2}-k)+k(a_{2}-k)+k(a_{1}-k) \big),\\
    \alpha_{1}\alpha_{2}\alpha_{3}=&4(a_{1}-k)(a_{2}-k)>0.
\end{align*}
Since  $ \alpha_{1}\alpha_{2}\alpha_{3}>0,$ $P_{B_{2}}(x)$ has zero or two negative roots. But, $\alpha_{1}\alpha_{2}+\alpha_{1}\alpha_{3}+\alpha_{2}\alpha_{3}<0.$ Thus, $P_{B_{2}}(x)$ has exactly $2$ negative roots and $1$ positive root. 
Now, the result follows from Theorem \ref{k coalesence,general,l times }.
\end{proof}
\begin{corollary}\label{k2n l times coalesence spectra } 
Let $G=K_{2n}\circ_{n}K_{2n}\circ_{n}\cdots \circ_{n}K_{2n},$ ( $l$ copies of $K_{2n}$), $l\geq 2, n\geq 2.$
Then, the spectrum of $\epsilon(G)$ consists of $-1$ with multiplicity at least $n-1,$ $0$ with multiplicity at least $(n-1)l,$ and roots of the polynomial$ (x+2n)^{l-1}  \big(x^{2}+x(1-n(2l-1))+n^{2}(l-2)-2n(l-1)\big).$
\end{corollary}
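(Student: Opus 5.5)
This is a direct specialization of Theorem \ref{k coalesence,general,l times }. We take $k=n$ and $a_{1}=\cdots=a_{l}=2n$, so that $a_{i}-k=n$ for every $i$. The hypothesis $a_{i}\geq 3$ holds because $n\geq 2$. The theorem then gives $-1$ with multiplicity at least $n-1$, and $0$ with multiplicity at least $\sum_{i=1}^{l}(n-1)=(n-1)l$. The remaining $l+1$ eigenvalues are those of the $(l+1)\times(l+1)$ matrix
\[
B=\begin{pmatrix} n-1 & n\mathbf{1}^{T}\\ n\mathbf{1} & 2n(J_{l}-I_{l})\end{pmatrix},
\]
where $\mathbf{1}$ is the all-ones vector of length $l$. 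So the whole task reduces to computing $\det(xI-B)$.

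First I will find the eigenvalue $-2n$. For any vector $w\in\mathbb{R}^{l}$ with $\mathbf{1}^{T}w=0$, the vector $(0,w)^{T}$ satisfies
\[
B(0,w)^{T}=\bigl(n\,\mathbf{1}^{T}w,\; 2n(J-I)w\bigr)^{T}=(0,-2nw)^{T}.
\]
This gives $l-1$ linearly independent eigenvectors for $-2n$, which accounts for the factor $(x+2n)^{l-1}$.

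The remaining two eigenvalues come from the invariant subspace of vectors $(s,t\mathbf{1})^{T}$. On this subspace $B$ acts as the $2\times 2$ quotient matrix
\[
\begin{pmatrix} n-1 & nl\\ n & 2n(l-1)\end{pmatrix}.
\]
Its characteristic polynomial is
\[
x^{2}-\bigl(n-1+2n(l-1)\bigr)x+(n-1)2n(l-1)-n^{2}l .
\]
The linear coefficient equals $1-n(2l-1)$. Expanding the constant term gives $2n^{2}l-2n^{2}-2nl+2n-n^{2}l=n^{2}(l-2)-2n(l-1)$, matching the stated quadratic.

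Alternatively, one could apply the Schur complement formula (Theorem \ref{Schurs complement formula}) to $xI-B$ with the block $U=xI_{l}-2n(J-I)$. However, the eigenvector decomposition is cleaner. Since the eigenvectors from the two pieces together span $\mathbb{R}^{l+1}$, the characteristic polynomial is exactly the stated product. The only step needing care is the arithmetic of the constant term, which I checked above; there is no real obstacle.
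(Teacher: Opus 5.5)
Your proof is correct, but it computes the polynomial by a different route than the paper. The paper also specializes Theorem~\ref{k coalesence,general,l times } to obtain the eigenvalues $-1$ and $0$. It then evaluates $\det(B_{3}-xI)$ with the Schur complement formula, taking the scalar block $U=n-1-x$ rather than the $l\times l$ block you mention. This reduces the problem to $\det\big((-x-2n)I+(2n-\tfrac{n^{2}}{n-1-x})J\big)$, and hence to the standard formula $\det(aI+bJ)=a^{l-1}(a+lb)$.

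You instead split $\mathbb{R}^{l+1}$ into the two $B$-invariant subspaces $\{(0,w):\mathbf{1}^{T}w=0\}$ and $\{(s,t\mathbf{1})\}$. You read off $-2n$ with multiplicity $l-1$ from explicit eigenvectors, and the quadratic from the $2\times 2$ equitable quotient matrix.

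Both routes rest on the same structure, since the $\det(aI+bJ)$ formula is itself the $\mathbf{1}$ versus $\mathbf{1}^{\perp}$ decomposition. What yours buys:
\begin{itemize}
\item It never divides by $n-1-x$, so you need not argue that an identity valid for $x\neq n-1$ extends to a polynomial identity.
\item It exhibits directly the quotient matrix whose largest eigenvalue gives the spectral radius in the next corollary.
\item It is exactly the technique the paper itself uses later for $B_{4}$, $B_{5}$, $B_{6}$.
\end{itemize}
The paper's route is a shorter mechanical determinant calculation.

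One small point of phrasing. At the end you only need that the two invariant subspaces form a direct sum decomposition of $\mathbb{R}^{l+1}$; then $B$ is block diagonal in an adapted basis and $\det(xI-B)$ factors. This does not require the $2$-dimensional piece to be spanned by eigenvectors, though here it is, because $B$ is symmetric when $a_{i}-k=k=n$.
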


\begin{proof}
Consider the matrix 
$B_{3}=\left(
\begin{array}{c|cccc}
n-1 & n & n & \cdots & n \\ \hline
n   & 0 & 2n & \cdots & 2n \\
n   & 2n & 0 & \cdots & 2n \\
\vdots & \vdots & \vdots & \ddots & \vdots \\
n   & 2n & 2n & \cdots & 0
\end{array}
\right)$.\\
By Lemma \ref{Schurs complement formula} we have,
\begin{align*}
det(B_{3}-xI)=& det(n-1-x)det\begin{pmatrix}
    -x-\frac{n^{2}}{n-1-x}&2n-\frac{n^{2}}{n-1-x}&\cdots&2n-\frac{n^{2}}{n-1-x}\\
    2n-\frac{n^{2}}{n-1-x}&    -x-\frac{n^{2}}{n-1-x}&\cdots& 2n-\frac{n^{2}}{n-1-x}\\
    \vdots & \vdots  & \ddots & \vdots \\
    2n-\frac{n^{2}}{n-1-x}&2n-\frac{n^{2}}{n-1-x}&\cdots&-x-\frac{n^{2}}{n-1-x}
    \end{pmatrix}\\
    =&(n-1-x)det\Big(  (-x-2n)I+(2n-\frac{n^{2}}{n-1-x}J        \Big)\\
    =&(-x-2n)^{l-1}  \big(x^{2}+x(1-n(2l-1))+n^{2}(l-2)-2n(l-1)\big).
   \end{align*}
    Hence, the result follows from Theorem \ref{k coalesence,general,l times }.
\end{proof}
The following corollary follows from Corollary \ref{k2n l times coalesence spectra }  and Theorem \ref{spectral radius of a matrix and its equitatable quotient matrix are the same}.
\begin{corollary}
 Let $G=K_{2n}\circ_{n}K_{2n}\circ_{n}\cdots \circ_{n}K_{2n},$ ( $l$ copies of $K_{2n}$), $l\geq 2, n\geq 2.$ Then, the spectral radius of $\epsilon(G),$ $\rho_{\epsilon}(G)=\frac{\big(n(2l-1)-1\big)+\sqrt{\big(1-n(2l-1)\big)^{2}-4\big(n^{2}(l-2)-2n(l-1)\big)}}{2}.$   
\end{corollary}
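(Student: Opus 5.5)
The plan is to identify $\rho_{\epsilon}(G)$ with the largest root of the quadratic factor in Corollary \ref{k2n l times coalesence spectra }. By Lemma \ref{irreducibility of general coalesence of graphs}, $\epsilon(G)$ is an irreducible nonnegative matrix. The partition $V(K_{n})\cup V(K_{2n}^{(1)}\backslash K_{n})\cup\cdots\cup V(K_{2n}^{(l)}\backslash K_{n})$ used in the proof of Theorem \ref{k coalesence,general,l times } is equitable, and its quotient matrix is $B_{3}$. Since every entry of $B_{3}$ is positive for $n\geq 2$ (the only possibly zero entries are the diagonal ones of the lower block), $B_{3}$ is nonnegative and irreducible. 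Theorem \ref{spectral radius of a matrix and its equitatable quotient matrix are the same} then gives $\rho_{\epsilon}(G)=\rho(B_{3})$.

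The next step is to locate $\rho(B_{3})$ among the eigenvalues of $B_{3}$. By Perron--Frobenius, $\rho(B_{3})$ is itself an eigenvalue of $B_{3}$ and is positive. By the factorization in Corollary \ref{k2n l times coalesence spectra }, the eigenvalues of $B_{3}$ are $-2n$ (with multiplicity $l-1$) together with the two roots of
$$q(x)=x^{2}+x\big(1-n(2l-1)\big)+n^{2}(l-2)-2n(l-1).$$
Since $-2n<0$, the spectral radius must be a root of $q$. The larger root of $q$, by the quadratic formula, is exactly the expression in the statement.

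It remains to check that this larger root is real and is the one equal to $\rho(B_{3})$. The roots are real because $B_{3}$ is a real matrix similar to a symmetric one via the diagonal scaling by block sizes. Indeed, $B_{3}$ comes from an equitable partition of a symmetric matrix; here all block sizes equal $n$, so $B_{3}$ is in fact already symmetric. Since $\rho(B_{3})$ is a root of $q$ and is the largest eigenvalue of $B_{3}$, it must be the larger root, the one with the $+\sqrt{\cdot}$ sign. No estimate on the smaller root is needed.

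The only step needing a little care is the irreducibility and positivity of $B_{3}$, which justifies applying Theorem \ref{spectral radius of a matrix and its equitatable quotient matrix are the same}; this is immediate from its positive off-diagonal entries. Everything else is a direct reading of the earlier factorization.
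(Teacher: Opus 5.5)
Your proposal is correct and follows the paper's route. The paper obtains this corollary directly from the factorization of $\det(B_{3}-xI)$ in Corollary \ref{k2n l times coalesence spectra } together with Theorem \ref{spectral radius of a matrix and its equitatable quotient matrix are the same}, and you fill in the details it leaves implicit: why $B_{3}$ is irreducible, and why the Perron root is the larger root of the quadratic factor rather than $-2n$. One small wording fix: not every entry of $B_{3}$ is positive, since its lower diagonal entries are $0$, but every off-diagonal entry is, and that is all irreducibility requires.
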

To determine the inertia and the eccentricity energy of $G=K_{2n}\circ_{n}K_{2n}\circ_{n}\cdots \circ_{n}K_{2n},$ ( $l$ copies of $K_{2n}$), $l\geq 2, n\geq 2,$ 
it suffices to examine the sign of
$\frac{\big(n(2l-1)-1\big)-\sqrt{\big(1-n(2l-1)\big)^{2}-4\big(n^{2}(l-2)-2n(l-1)\big)}}{2}.$ This leads to the following corollaries.
\begin{corollary}\label{inertia for n coalecence of K2n, for all cases}
 Let $G=K_{2n}\circ_{n}K_{2n}\circ_{n}\cdots \circ_{n}K_{2n},$ ( $l$ copies of $K_{2n}$), $l\geq 2, n\geq 2.$ Then,   the inertia of $\epsilon(G),$  
$(n_{-}(\epsilon(G), n_{0}(\epsilon(G)),n_{+}(\epsilon(G))=\begin{cases}
            (n+1,2(n-1),1), & \text{ if }l=2\\
            (5,10,1), &\text{ if } l=3,n=4\\
            (n+1,3(n-1),2), &\text{ if } l=3,n>4\\
            (n+2,3(n-1),1), &\text{ if } l=3,n<4\\
            (l+1,l,1), &\text{ if } l\geq 4 ,n=2\\
            (n+l-2,(n-1)l,2), &\text{ if } l> 4,n\geq 3\\
            (5,9,1), &\text{ if } l= 4,n=3\\
            (5,4,1), &\text{ if } l= 4,n=2\\
            (n+l-2,(n-1)l,2), &\text{ if } l=4,n> 3.\\
            \end{cases}$
\end{corollary}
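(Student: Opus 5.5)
The plan is to read off the inertia directly from Corollary~\ref{k2n l times coalesence spectra }. That corollary lists $-1$ with multiplicity at least $n-1$, $0$ with multiplicity at least $(n-1)l$, $-2n$ with multiplicity $l-1$ (from the factor $(x+2n)^{l-1}$), and the two roots $\beta_1\ge\beta_2$ of
$$q(x)=x^{2}+x\big(1-n(2l-1)\big)+n^{2}(l-2)-2n(l-1).$$
First I would check the count:
$$(n-1)+(n-1)l+(l-1)+2=n+nl=|V(G)|.$$
So these lists exhaust the spectrum, the multiplicities stated as ``at least'' are exact, and every eigenvalue is accounted for with its sign. The only eigenvalues whose signs are not immediate are $\beta_1$ and $\beta_2$.

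For the quadratic, the sum of the roots is $\beta_1+\beta_2=n(2l-1)-1>0$, so $\beta_1>0$ always (this is the spectral radius, consistent with the previous corollary). The sign of $\beta_2$ is the sign of the product
$$c=\beta_1\beta_2=n\big(n(l-2)-2(l-1)\big).$$
Hence there are three possibilities:
\begin{itemize}
\item if $c>0$, there are two positive eigenvalues;
\item if $c<0$, $\beta_2$ is one extra negative eigenvalue;
\item if $c=0$, $\beta_2$ is one extra zero eigenvalue.
\end{itemize}
Since only signs matter for inertia, possible coincidences of $\beta_2$ with $-1$ or $-2n$ are harmless.

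The main step, and really the only one needing care, is the case analysis of the sign of $n(l-2)-2(l-1)$.

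\begin{itemize}
\item For $l=2$ this equals $-2<0$. This gives $n_-=(n-1)+1+1=n+1$, $n_0=2(n-1)$ and $n_+=1$.
\item For $l\ge3$, vanishing means $n=2+\frac{2}{l-2}$. This has integer solutions only at $(l,n)=(3,4)$ and $(4,3)$.
\begin{itemize}
\item At $(3,4)$ we get $n_-=3+2=5$ and $n_0=9+1=10$.
\item At $(4,3)$ we get $n_-=2+3=5$ and $n_0=8+1=9$.
\end{itemize}
In both cases $n_+=1$.
\item For $l=3$ the sign is that of $n-4$.
\begin{itemize}
\item If $n>4$, there are two positive eigenvalues and $n_-=(n-1)+2=n+1$.
\item If $n<4$, $\beta_2$ is negative and $n_-=n+2$.
\end{itemize}
In both cases $n_0=3(n-1)$.
\item For $l\ge4$ and $n=2$, we get $c=-4<0$. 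Then $n_-=1+(l-1)+1=l+1$, $n_0=l$ and $n_+=1$. This covers $(l,n)=(4,2)$, giving $(5,4,1)$.
\item For $n\ge3$ and $l\ge4$,
$$n(l-2)-2(l-1)\ge 3(l-2)-2(l-1)=l-4.$$
This is positive for $l>4$. For $l=4$, $n>3$ it is directly positive, since it equals $2n-6$. In both subcases there are two positive eigenvalues and $n_-=(n-1)+(l-1)=n+l-2$, $n_0=(n-1)l$.
\end{itemize}

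These cases match the table in the statement entry by entry, which completes the argument.
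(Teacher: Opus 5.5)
Your proof is correct and follows essentially the same route as the paper: reduce the inertia to the sign of the smaller root of the quadratic factor $x^{2}+x\big(1-n(2l-1)\big)+n^{2}(l-2)-2n(l-1)$, then split into cases on $(l,n)$. Your criterion via the product of roots, $n(l-2)-2(l-1)\lessgtr 0$, is the paper's condition $(n-2)(l-2)\lessgtr 2$ written differently. Your explicit count $(n-1)+(n-1)l+(l+1)=n(l+1)$ also makes rigorous the exhaustiveness of the spectrum, which the paper leaves implicit.
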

\begin{proof}
By Corollary \ref{k2n l times coalesence spectra }, the eccentricity matrix,
$$\epsilon(K_{2n}\circ_{n}K_{2n}\circ_{n}\cdots \circ_{n}K_{2n}),$$
has at least $n+l-2$ negative eigenvalues, $(n-1)l$ zero eigenvalues, and at least $1$ positive eigenvalue. 
Hence, to determine the inertia of $\epsilon(K_{2n}\circ_{n}K_{2n}\circ_{n}\cdots \circ_{n}K_{2n}),$ it is enough to find the sign of $x_{l,n}$ in each cases, where  $$x_{l,n}=\frac{\big(n(2l-1)-1\big)-\sqrt{\big(1-n(2l-1)\big)^{2}-4\big(n^{2}(l-2)-2n(l-1)\big)}}{2}.$$\\

For $l=2,$ $$x_{2,n}=\frac{(3n-1)-\sqrt{(1-3n)^{2}+8n}}{2}<0.$$\\

For $l=3,$ $$x_{3,n}=\frac{(5n-1)-\sqrt{(1-5n)^{2}-4n(n-4)}}{2}.$$
Clearly, for $n=4,$ $x_{3,4}=0.$ Moreover, if $n>4,$ then $x_{3,l}>0,$ and if $n<4,$ then $x_{3,n}<0.$ \\

For $l\geq 4$ and $n=2,$ $$x_{l,2}=\frac{(4n-3)-\sqrt{(3-4n)^{2}+16}}{2}<0$$\\

For $l>4$ and $n\geq 3,$   suppose  $$x_{l,n}=\frac{\big(n(2l-1)-1\big)-\sqrt{\big(1-n(2l-1)\big)^{2}-4\big(n^{2}(l-2)-2n(l-1)\big)}}{2}<0,$$ solving this inequality yields $(n-2)(l-2)<2,$ which is a contradiction. Hence, for $l>4$ and $n\geq3,$ $x_{l,n}>0.$\\

Finally, for $l=4,$ $$x_{l,n}=\frac{(7n-1)-\sqrt{(1-7n)^{2}-8n(n-3)}}{2}.$$ Clearly, for
$n=3,$ $x_{4,3}=0.$ Moreover,  $x_{4,2}<0,$ and for $n>3,$ $x_{4,n}>0.$ 
Thus, the result follows.
\end{proof}

\begin{corollary}\label{energy k coalencence for all cases}
  Let $G=K_{2n}\circ_{n}K_{2n}\circ_{n}\cdots \circ_{n}K_{2n},$ ( $l$ copies of $K_{2n}$), $l\geq 2, n\geq 2.$ Then,   the  $\epsilon$-energy of $G$ is $$E_{\epsilon}(G)=\begin{cases}
                    2\big(n(2l-1)-1\big), & \text{ if }l=3 \text{ and } n>4 \\
                                          & \text{ or }  l> 4 \text{ and } n\geq 3\\
                                          &\text{ or }  l= 4 \text{ and } n>3\\
                    \big(n(2l-1)-1\big)+\sqrt{\big(1-n(2l-1)\big)^{2}-4\big(n^{2}(l-2)-2n(l-1)\big)}, & \text{ otherwise. }                     
                      \end{cases}$$
\end{corollary}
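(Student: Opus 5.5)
The energy is the sum of the absolute values of the full $\epsilon$-spectrum, so I will read it off from Corollary \ref{k2n l times coalesence spectra }. That spectrum is: $-1$ with multiplicity $n-1$; $0$ with multiplicity $(n-1)l$; $-2n$ with multiplicity $l-1$; and the two roots $x_{l,n}\le \rho_{\epsilon}(G)$ of $x^{2}+x(1-n(2l-1))+n^{2}(l-2)-2n(l-1)$.

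First I check that these multiplicities are exact. They sum to $(n-1)+(n-1)l+(l-1)+2=n(l+1)$, which is the order of $G$. Hence no eigenvalue has been missed, and the ``at least'' in the corollary is an equality.

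Next I use the trace. Since $\epsilon(G)$ has zero diagonal, $\operatorname{tr}\epsilon(G)=0$, so
\[
(n-1)+2n(l-1)=\rho_{\epsilon}(G)+x_{l,n}.
\]
I would check this against Vieta's formula, which gives $\rho_{\epsilon}(G)+x_{l,n}=n(2l-1)-1$; this agrees with the line above.

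Then I split into cases according to the sign of $x_{l,n}$, as determined in the proof of Corollary \ref{inertia for n coalecence of K2n, for all cases}. The nonpositive eigenvalues always contribute
\[
(n-1)+2n(l-1)=n(2l-1)-1 .
\]
If $x_{l,n}\ge 0$, the positive eigenvalues sum to $\rho_{\epsilon}(G)+x_{l,n}=n(2l-1)-1$ by Vieta. The energy is then $2\big(n(2l-1)-1\big)$, equal to twice the sum of the positive eigenvalues. This happens for $l=3,n>4$; for $l>4,n\ge3$; and for $l=4,n>3$. In the boundary cases $x_{l,n}=0$ (namely $l=3,n=4$ and $l=4,n=3$) the same value coincides with the second formula, since there $\rho_{\epsilon}(G)=n(2l-1)-1$.

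If $x_{l,n}<0$, the energy is
\[
n(2l-1)-1-x_{l,n}+\rho_{\epsilon}(G)=n(2l-1)-1+\big(\rho_{\epsilon}(G)-x_{l,n}\big).
\]
Here $\rho_{\epsilon}(G)-x_{l,n}$ equals the square root of the discriminant, which gives the ``otherwise'' formula.

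The only delicate step is the sign analysis of $x_{l,n}$, but that is already settled by Corollary \ref{inertia for n coalecence of K2n, for all cases}. So the main obstacle reduces to bookkeeping: confirming that the listed cases match the corollary's case split and that the boundary cases are consistent with either formula.
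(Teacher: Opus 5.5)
Your proposal is correct and follows the route the paper intends. You take the full $\epsilon$-spectrum from Corollary~\ref{k2n l times coalesence spectra }, split on the sign of $x_{l,n}$ as settled in Corollary~\ref{inertia for n coalecence of K2n, for all cases}, and use that the two quadratic roots sum to $n(2l-1)-1$ and differ by the square root of the discriminant. Your dimension count and your check that the boundary cases $(l,n)=(3,4),(4,3)$ satisfy both formulas make explicit what the paper leaves unsaid. One small point: in those two cases $0$ actually has multiplicity $(n-1)l+1$, since $x_{l,n}=0$. So your claim that the multiplicities are exact should be read as a multiset statement, which is all the energy computation needs.
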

\section{Eccentricity energy change of coalescence of complete graphs due to edge deletion}
In this section we estimate the eccentricity energy of $K_{2n}\circ_{n} K_{2n}\circ_{n} \ldots \circ_{n} K_{2n}\backslash\{e\}$.\\
Note that for any edge $e \in K_{2n}\circ_{n} K_{2n}\circ_{n}\cdots\circ_{n} K_{2n}$, exactly one of the following three cases occurs.

\medskip
\noindent\textbf{Case 1.} \textit{Edges induced by the common clique $K_{n}$.}  
These are the edges of the cliques which are identified corresponding to each of the graphs $K_{2n}.$
\medskip

\noindent\textbf{Case 2.} \textit{Edges incident with the common clique $K_{n}$.}  
Each edge in this case has one endpoint in $K_{n}$ and the other in $V(K_{2n}) \backslash V(K_{n})$. 
\medskip

\noindent\textbf{Case 3.} \textit{Edges not incident with the common clique $K_{n}$.}  
Both endpoints of such edges lie in $V(K_{2n}) \backslash V(K_{n})$.

\subsection{Eccentricity energy of  \texorpdfstring{$K_{2n}\circ_{n} K_{2n}\circ_{n}\ldots \circ_{n} K_{2n}\backslash\{e\}$}  ffor case 1 }
We first compute the eccentricity eigenvalues of $K_{2n}\circ_{n} K_{2n}\circ_{n}\ldots \circ_{n} K_{2n}\backslash\{e\}$. 
We then compare the eccentricity energy of $K_{2n}\circ_{n} K_{2n}\circ_{n}\ldots \circ_{n} K_{2n}$ with that of $K_{2n}\circ_{n} K_{2n}\ldots \circ_{n} K_{2n}\backslash\{e\}$.

\begin{lemma}\label{spectrum of coalesence, general, case 1}
 Let $G=K_{2n}\circ_{n} K_{2n}\circ_{n}\cdots \circ_{n} K_{2n}\backslash\{e\}$,( $l$ copies of $K_{2n}$), $n\geq 3,l\geq 2.$ Then, the spectrum of $\epsilon(G)$ consists of: 
 \begin{enumerate}
     \item $-1$ with multiplicity at least $n-3.$
     \item $-2$ with multiplicity at least $1.$
     \item $0$ with multiplicity  $(n-1)l.$
     \item $-2n$ with multiplicity  $l-1.$
     \item Roots of the polynomial $x^{3}+\big(-2nl+n+1\big)x^{2}+\big(2n+ln^{2}-2n^{2}-2\big)x+-4n+2ln^{2}.$
 \end{enumerate}
\end{lemma}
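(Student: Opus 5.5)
We take $e=uv$ with $u,v$ in the common clique $K_{n}$, and compute $\epsilon(G)$ directly. In $G$, $d(u,v)=2$ and every other pair of vertices keeps its distance (diameter $2$). So $e(u)=e(v)=2$, the remaining $n-2$ clique vertices (call this set $W$) still have eccentricity $1$, and every outer vertex has eccentricity $2$. Let $X_{1},\ldots,X_{l}$ be the outer blocks, each of size $n$. Applying the definition of $\epsilon$ entrywise gives the following.
\begin{itemize}
\item $\epsilon_{uv}=2$.
\item $u$ (and likewise $v$) is joined to each $w\in W$ with entry $1$, but to each outer vertex with entry $0$, since $d=1<2=\min$.
\item $W$ induces $J-I$, and $W$ meets every $X_{i}$ with entries $1$.
\item Entries inside a block $X_{i}$ are $0$, and entries between $X_{i}$ and $X_{j}$ for $i\neq j$ are $2$.
\end{itemize}
With vertices labelled in the order $\{u,v\},W,X_{1},\ldots,X_{l}$, this gives an explicit block matrix, in the style of the proof of Theorem \ref{k coalesence,general,l times }.

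We then write down explicit eigenvectors, exactly as in that theorem.
\begin{itemize}
\item Differences $e_{w_{1}}-e_{w_{i}}$ within $W$ give eigenvalue $-1$, with multiplicity at least $n-3$.
\item The vector $e_{u}-e_{v}$ gives eigenvalue $-2$: its $u$-entry becomes $-2$, its $v$-entry $2$, and the rows of $W$ see $1-1=0$.
\item Differences within each $X_{i}$ give eigenvalue $0$, with multiplicity $(n-1)l$. The rows of $u$, $v$ and $W$ are constant on $X_{i}$, and the other blocks see $2-2=0$.
\item $\mathbf 1_{X_{1}}-\mathbf 1_{X_{j}}$ for $j=2,\ldots,l$ gives eigenvalue $-2n$, with multiplicity $l-1$. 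Here $W$ sees $n-n=0$, and on $X_{1}$ we get $-2n$, since $X_{1}$ sees $-2n$ from $X_{j}$ and $0$ from itself.
\end{itemize}

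All these vectors are orthogonal to $\mathbf 1_{\{u,v\}}$, $\mathbf 1_{W}$ and $\mathbf 1_{X_{1}\cup\cdots\cup X_{l}}$, and they span a space of dimension $(n-3)+1+(n-1)l+(l-1)$. The total order is $n+nl$, so exactly three eigenvalues remain. By the orthogonality argument of Theorem \ref{k coalesence,general,l times } (equivalently, Theorem \ref{equitable quotient matrix eigenvalue thm} applied to this equitable partition), these three are the eigenvalues of the quotient matrix
$$B=\begin{pmatrix} 2 & n-2 & 0\\ 2 & n-3 & ln\\ 0 & n-2 & 2n(l-1)\end{pmatrix}.$$

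The main obstacle is the bookkeeping. First, every entry of $\epsilon(G)$ must be checked carefully against the min-eccentricity rule; in particular the $u$–$X_{i}$ entries vanish while the $W$–$X_{i}$ entries do not. Second, we must expand $\det(xI-B)$ and check that it equals $x^{3}+(-2nl+n+1)x^{2}+(2n+ln^{2}-2n^{2}-2)x-4n+2ln^{2}$. The trace check is immediate: $\operatorname{tr}B=2nl-n-1$, which matches the $x^{2}$ coefficient. The remaining two coefficients follow from a direct cofactor expansion along the first row. Finally, the multiplicity claims for $0$ and $-2n$ are stated as exact. To justify them, I would verify that the cubic does not vanish at $0$ or $-2n$ for $n\geq 3$, $l\geq 2$; at $x=0$ it equals $2ln^{2}-4n>0$.
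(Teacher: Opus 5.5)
Your proposal is correct and follows essentially the paper's approach: explicit eigenvectors for $-1$, $-2$ and $0$, followed by an equitable quotient. The difference is that you lift the $-2n$ eigenvectors $\mathbf 1_{X_1}-\mathbf 1_{X_j}$ directly to $\epsilon(G)$ and pass straight to the $3$-part partition, whereas the paper goes through the $(l+2)\times(l+2)$ matrix $B_4$ and then its quotient $\tilde B_4$. Your $B$ is $\tilde B_4$ with the first two indices swapped, so the two routes give the same cubic.

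The one check you left undone, at $x=-2n$, gives $2ln^2(1-5n)\neq 0$, exactly as in the paper. Your check at $x=0$ supplies the exactness of the multiplicity of $0$, which the paper's proof never verifies.
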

\begin{proof}
    By suitable labeling of vertices
 $$\epsilon(K_{2n}\circ_{n} K_{2n}\circ_{n}\cdots\circ_{n}K_{2n}\backslash\{e\})=\begin{pmatrix}
        (J-I)_{n-2\times n-2} &J_{n-2\times 2} &J_{n-2\times n}&J_{n-2\times n} &\cdots&J_{n-2\times n}\\
        J_{2\times n-2} & 2(J-I)_{2\times 2} & 0_{2\times n}&0_{2\times n}&\cdots&0_{2\times n}\\
        J_{n\times n-2}&0_{n\times 2}&0_{n\times n}&2J_{n\times n}&\cdots&2J_{n\times n}\\
        J_{n\times n-2}&0_{n\times 2}&2J_{n\times n}&0_{n\times n}&\cdots&2J_{n\times n}\\
        \vdots&\vdots&\vdots &\vdots&\ddots&\vdots\\
        J_{n\times n-2}&0_{n\times 2}&2J_{n\times n}&2J_{n\times n}&\cdots&0_{n\times n}
    \end{pmatrix}_{(l+1)n\times (l+1)n}$$
For $i=2,3\ldots, n-2,$ consider the vector 
$$x^{(i)}=(x_{1,1},x_{1,2},\ldots,x_{1,n-2},x_{2,1},x_{2,2},x_{3,1},x_{3,2},\ldots,x_{3,n},x_{4,1},x_{4,2},\ldots,x_{4,n},\cdots,x_{l+2,1},x_{l+2,2},\ldots,x_{l+2,n})$$
such that $x_{1,1}=1,x_{1,i}=-1,$ and $x_{s,t}\neq x_{1,1},x_{1,i}.$
Then, $\epsilon(G)x^{(i)}=(-1)x^{(i)},$ and all the vectors $x^{(i)}, 2\leq i\leq n-2$ are linearly independent. Thus, $-1$ is an eigenvalue of $\epsilon(K_{2n}\circ_{n} K_{2n}\cdots \circ K_{2n}\backslash\{e\})$ with multiplicity at least $n-3.$\\
Now, consider the vector
$$y=(y_{1,1},y_{1,2},\ldots,y_{1,n-2},y_{2,1},y_{2,2},y_{3,1},y_{3,2},\ldots,y_{3,n},y_{4,1},y_{4,2},\ldots,y_{4,n},\cdots,y_{l+2,1},y_{l+2,2},\ldots,y_{l+2,n})$$  
such that $y_{2,1}=1,y_{2,2}=-1$ and $y_{u,v}=0$ for $y_{u,v}\neq y_{2,1},y_{2,2}.$
Then, $\epsilon(K_{2n}\circ_{n} K_{2n}\circ_{n}\cdots \circ K_{2n}\backslash\{e\})y=-2y.$
Therefore, $-2$ is an eigenvalue of $\epsilon(K_{2n}\circ_{n} K_{2n}\circ_{n}\cdots \circ_{n} K_{2n}\backslash\{e\})$ with multiplicity at least $1.$\\

For $j=3,4,\ldots,l+2,$ consider the vector,
$$z^{(j,t)}=(z_{1,1},z_{1,2},\ldots,z_{1,n-2},z_{2,1},z_{2,2},z_{3,1},z_{3,2},\ldots,z_{3,n},z_{4,1},z_{4,2},\ldots,z_{4,n},\cdots,z_{l+2,1},z_{l+2,2},\ldots,z_{l+2,n})$$ 
such that $z_{j,1}=1,z_{j,t}=-1,$ for $2\leq t \leq n,$ and $z_{s,p}=0,$ for $z_{s,p}\neq z_{j,1},z_{j,t}.$ Then, $\epsilon(K_{2n}\circ_{n} K_{2n}\circ_{n}\cdots \circ_{n} K_{2n}\backslash\{e\})z^{(j,i)}=0,$ and all the vectors $z^{(j,i)}$ are linearly independent. Thus, $0$ is an eigenvalue of $\epsilon(K_{2n}\circ_{n} K_{2n}\circ_{n}\cdots \circ_{n} K_{2n}\backslash\{e\})$ with multiplicity at least $(n-1)l.$\\
Note that all the eigenvectors constructed so far are orthogonal to 
$$\begin{pmatrix}
    J_{n-2\times 1}\\
    0_{2\times 1}\\
    0_{n\times 1}\\
    \vdots\\
    0_{n\times 1}
\end{pmatrix},
\begin{pmatrix}
    0_{n\times 1}\\
    J_{2\times 1}\\
    0_{n-2\times 1}\\
    \vdots\\
    0_{n\times 1}
\end{pmatrix},
\begin{pmatrix}
    0_{n-2\times 1}\\
    0_{2\times 1}\\
    J_{n\times 1}\\
    \vdots\\
    0_{n\times 1}
\end{pmatrix},
\ldots
\begin{pmatrix}
    0_{n-2\times 1}\\
    0_{2\times 1}\\
    0_{n\times 1}\\
    \vdots\\
    J_{n\times 1}
\end{pmatrix}.$$
Since  $\epsilon(K_{2n}\circ_{n} K_{2n}\circ_{n}\cdots \circ_{n} K_{2n}\backslash\{e\})$ is a  symmetric matrix of order $(n+1),$ $\mathbb{R}^{(n+1)l}$ has an orthogonal basis consisting of eigenvectors of $\epsilon(K_{2n}\circ_{n} K_{2n}\circ_{n}\cdots \circ_{n} K_{2n}\backslash\{e\}).$
Hence, the remaining $l+2$ eigenvectors are spanned by these $l+2$ vectors.
 Thus, if $\sigma_{1}$ is an eigenvalue of $\epsilon(K_{2n}\circ_{n} K_{2n}\circ_{n}\cdots \circ_{n} K_{2n}\backslash\{e\})$ with an eigenvector $\zeta_{1},$ of the form $\begin{pmatrix}
    q_{1}J_{n-2\times 1}\\
    q_{2}J_{2\times 2}\\
    q_{3}J_{n\times n}\\
    q_{4}J_{n\times n}\\
    \vdots\\
    q_{l+2}J_{n\times n}
    \end{pmatrix},$
   where $q_{i}\in \mathbb{R},$ then from $\epsilon(K_{2n}\circ_{n} K_{2n}\circ_{n}\cdots \circ_{n} K_{2n}\backslash\{e\})\zeta_{1}=\sigma_{1}\zeta_{1},$ it follows that the remaining $l+2$ eigenvalues are the eigenvalues of the matrix 
     $$B_{4}=\begin{pmatrix}
            n-3&2&n&n&n&n&\cdots&n\\
            n-2&2&0&0&0&0&\cdots&0\\
            n-2&0&0&2n&2n&2n&\cdots&2n\\
            n-2&0&2n&0&2n&2n&\cdots&2n\\
            \vdots& \vdots& \vdots& \vdots& \vdots& \vdots&\cdots& \vdots\\
            n-2&0&2n&2n&2n&2n&\cdots&0
        \end{pmatrix}_{(l+2)\times (l+2)}.$$
 Now we will find the eigenvalues of the matrix  $B_{4}$.      
 Consider the vector $$u^{(q)}=(u_{1},u_{2},u_{3},u_{4},\ldots,u_{l+2})$$    
such that $u_{3}=1,u_{q}=-1,$ for $4\leq q\leq l+2,$ and $u_{r}=0,$ for $r\neq 3,q.$
Then, all the vectors $u^{(q)}$ are linearly independent and $B_{4}u^{(q)}=(-2n)u^{(q)}.$ Thus, $-2n$ is an eigenvalue of $B_{4}$ with multiplicity at least $l-1.$
 Consider the equitable quotient matrix of $B_{4}$ given by $$\tilde{B_{4}}=\begin{pmatrix}
    n-3&2&ln\\
    n-2&2&0\\
    n-2&0&2n(l-1)
\end{pmatrix}.$$
The characteristic polynomial of $\tilde{B_{4}}$ is, 
\begin{equation}\label{P(B4) for case 1, general}
P_{\tilde{B_{4}}}(x)=x^{3}+x^{2}(n-2nl+1)+x(2n+ln^{2}-2n^{2}-2)-4n+2ln^{2}.
\end{equation}
Now, $P_{\tilde{B_{4}}}(-2n)=2ln^{2}(1-5n)\neq 0.$
Thus, the eigenvalues of $B_{4}$ are $-2n$ with multiplicity  $l-1$ and roots of the polynomial (\ref{P(B4) for case 1, general}). The result follows from Theorem \ref{equitable quotient matrix eigenvalue thm}.
\end{proof}
\begin{corollary}\label{e spectrum of k coalencence l=2, case 1}
The eigenvalues of the eccentricity matrix of $K_{2n}\circ_{n} K_{2n}\backslash\{e\},n\geq 3$ are 
$-1, 0,-2n $ with multiplicities $n-3, 2n-2, 1$ respectively,$-2$ with multiplicity at least $1,$
and the remaining eigenvalues are roots of the cubic equation $x^{3}+(1-3n)x^{2}+2(n-1)x+4n(n-1).$
\end{corollary}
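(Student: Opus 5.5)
The plan is to obtain this as the special case $l=2$ of Lemma \ref{spectrum of coalesence, general, case 1}, and then to check that the listed multiplicities are exact where the statement says so.

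First, put $l=2$ in the lemma. The eigenvalue $-1$ then has multiplicity at least $n-3$, $-2$ has multiplicity at least $1$, $0$ has multiplicity $(n-1)l=2n-2$, and $-2n$ has multiplicity $l-1=1$. The remaining three eigenvalues are the roots of the cubic in the lemma, whose coefficients become
\begin{align*}
n-2nl+1 &= 1-3n,\\
2n+ln^{2}-2n^{2}-2 &= 2(n-1),\\
-4n+2ln^{2} &= 4n(n-1).
\end{align*}
So the cubic is $p(x)=x^{3}+(1-3n)x^{2}+2(n-1)x+4n(n-1)$, as claimed. As a consistency check, the counts add up to $(n-3)+1+(2n-2)+1+3=3n=(l+1)n$, which is the order of $\epsilon(G)$.

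Next, I would justify that the multiplicities of $-1$, $0$ and $-2n$ are exactly $n-3$, $2n-2$ and $1$. Since the total count is already the order of the matrix, it suffices to show that none of these values is a root of $p$. Evaluating gives $p(0)=4n(n-1)\neq 0$ and $p(-1)=4n^{2}-9n+2=(4n-1)(n-2)\neq 0$ for $n\geq 3$. For $-2n$, the lemma already records $P_{\tilde{B_{4}}}(-2n)\neq 0$, and I would recheck this directly at $l=2$.

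Finally, the multiplicity of $-2$ is stated only as "at least $1$", and this is necessary: $p(-2)=4n^{2}-20n=4n(n-5)$ vanishes when $n=5$. In that case $-2$ has multiplicity $2$, so no exactness claim should be made for it. The only real work is these polynomial evaluations; there is no structural obstacle, because everything else is inherited from Lemma \ref{spectrum of coalesence, general, case 1}.
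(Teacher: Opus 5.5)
Your proposal is correct and follows the paper, which obtains this corollary simply by setting $l=2$ in the general Case~1 spectrum lemma; your coefficient reduction to $x^{3}+(1-3n)x^{2}+2(n-1)x+4n(n-1)$ is exactly that specialization. The evaluations $p(0)=4n(n-1)$, $p(-1)=(4n-1)(n-2)$ and $p(-2n)=4n^{2}(1-5n)$ add something the paper leaves implicit: they justify the exact multiplicities claimed for $-1$, $0$ and $-2n$. Likewise, $p(-2)=4n(n-5)$ correctly explains why only ``at least $1$'' can be asserted for $-2$.
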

\begin{theorem}\label{case1 green edge deleting energy change , l=2}
Let $G=K_{2n}\circ_{n}K_{2n}\backslash \{e\}, n\geq 3.$
    Then, the eccentricity energy of $K_{2n}\circ_{n}K_{2n}\backslash \{e\}$ is greater than the eccentricity energy of $K_{2n}\circ_{n}K_{2n}.$
\end{theorem}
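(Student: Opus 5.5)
The plan is to compare the two energies through their negative parts, using the spectra already computed. Since $\epsilon(G)$ has zero diagonal, its trace is $0$. Hence, for any graph, the eccentricity energy equals twice the sum of the positive $\epsilon$-eigenvalues, and also twice the sum of the absolute values of the negative ones.

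\textbf{Spectrum of $K_{2n}\circ_{n}K_{2n}$.} By Corollary \ref{k2n l times coalesence spectra } with $l=2$, the spectrum consists of $-1$, $0$, $-2n$ and the two roots of
$$q(x)=x^{2}-(3n-1)x-2n.$$
Let $\rho>0>\rho'$ be these two roots. By Corollary \ref{inertia for n coalecence of K2n, for all cases}, $\rho$ is the only positive eigenvalue. Therefore $E_{\epsilon}(K_{2n}\circ_{n}K_{2n})=2\rho=2\big((3n-1)-\rho'\big)$, using $\rho+\rho'=3n-1$.

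\textbf{Spectrum of $K_{2n}\circ_{n}K_{2n}\backslash\{e\}$.} By Corollary \ref{e spectrum of k coalencence l=2, case 1}, the only eigenvalues not already listed as $-1,0,-2,-2n$ are the roots $\beta_{1},\beta_{2},\beta_{3}$ of
$$p(x)=x^{3}+(1-3n)x^{2}+2(n-1)x+4n(n-1).$$
I would determine their signs from Vieta's relations:
\begin{itemize}
\item $\beta_{1}\beta_{2}\beta_{3}=-4n(n-1)<0$, so there are one or three negative roots;
\item $\beta_{1}+\beta_{2}+\beta_{3}=3n-1>0$, which rules out three negative roots.
\end{itemize}
So there is exactly one negative root $\beta_{3}$ and two positive roots $\beta_{1},\beta_{2}$. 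All other eigenvalues of $\epsilon(K_{2n}\circ_{n}K_{2n}\backslash\{e\})$ are nonpositive. Hence
$$E_{\epsilon}(K_{2n}\circ_{n}K_{2n}\backslash\{e\})=2(\beta_{1}+\beta_{2})=2\big((3n-1)-\beta_{3}\big).$$

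\textbf{Reduction.} Comparing the two formulas, the theorem reduces to the single inequality $\beta_{3}<\rho'$.

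\textbf{Main step: separating $\beta_{3}$ and $\rho'$.} I would place both roots relative to $-1$ using sign changes.
\begin{itemize}
\item For $q$: $q(-1)=n>0$ and $q(0)=-2n<0$. Hence $\rho'\in(-1,0)$.
\item For $p$: $p(-1)=4n^{2}-9n+2>0$ for $n\geq 3$, while $p(x)\to-\infty$ as $x\to-\infty$. So $p$ has a root less than $-1$. Since $p$ has exactly one negative root, that root is $\beta_{3}$, and $\beta_{3}<-1$.
\end{itemize}
Therefore $\beta_{3}<-1<\rho'$. This gives
$$E_{\epsilon}(K_{2n}\circ_{n}K_{2n}\backslash\{e\})-E_{\epsilon}(K_{2n}\circ_{n}K_{2n})=2(\rho'-\beta_{3})>0.$$

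The only delicate point is the sign bookkeeping in the spectrum of $\epsilon(K_{2n}\circ_{n}K_{2n}\backslash\{e\})$. One must confirm that the eigenvalue $-2$, which has multiplicity at least $1$, and the cubic's roots together account for the full spectrum without producing extra positive eigenvalues. This holds because every eigenvalue outside the cubic is $-1$, $-2$, $-2n$ or $0$.
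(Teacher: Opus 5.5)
Your proof is correct and complete in substance, and it takes a somewhat different route from the paper's.

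\textbf{The paper's route.} The paper keeps the closed form $E_{\epsilon}(K_{2n}\circ_{n}K_{2n})=(3n-1)+\sqrt{(3n-1)^{2}+8n}$ and writes $E_{\epsilon}(K_{2n}\circ_{n}K_{2n}\backslash\{e\})=(3n-1)+\alpha_{1}+\alpha_{2}+|\alpha_{3}|$. It then squares, using the identity $(\alpha_{1}+\alpha_{2}+|\alpha_{3}|)^{2}=(3n-1)^{2}+4(\alpha_{1}+\alpha_{2})|\alpha_{3}|$ and the bound $(\alpha_{1}+\alpha_{2})|\alpha_{3}|=(3n-1+|\alpha_{3}|)|\alpha_{3}|>2n$. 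That bound comes from $\alpha_{3}<-1$, i.e.\ from $p(-1)>0$.

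\textbf{Your route.} You use that the trace of an eccentricity matrix is zero. This writes both energies as $2(3n-1)$ minus twice the single nontrivial negative root. The square root disappears, and the comparison becomes the root-location statement $\beta_{3}<\rho'$, which you settle with $\beta_{3}<-1<\rho'$.

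\textbf{How they relate.} The two arguments prove the same inequality in different clothing. Since $|\rho'|(3n-1+|\rho'|)=2n$ and $t\mapsto t(3n-1+t)$ is increasing for $t>0$, the paper's bound $(3n-1+|\alpha_{3}|)|\alpha_{3}|>2n$ is exactly $|\alpha_{3}|>|\rho'|$. Both proofs ultimately rest on $p(-1)>0$. Your version is shorter and shows transparently why the energy increases. It also gives the explicit gap $2(\rho'-\beta_{3})>0$, so the strict inequality the theorem asserts is immediate.

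\textbf{Two points to state explicitly.} First, the Vieta sign count (product negative, sum positive, hence exactly one negative root) presupposes that the cubic's roots are real. This holds because they are eigenvalues of the symmetric matrix $\epsilon(K_{2n}\circ_{n}K_{2n}\backslash\{e\})$, via the equitable quotient theorem. Second, your bookkeeping of which eigenvalues are positive is made rigorous by the count $(n-3)+1+(2n-2)+1+3=3n$. This equals the order of the graph, so the listed multiset is the whole spectrum.
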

\begin{proof}
    Let $\alpha_{1}, \alpha_{2}, \alpha_{3}$ be the  roots of the polynomial $g_{\tilde{B}_{4}}(x)=x^{3}+(1-3n)x^{2}+2(n-1)x+4n(n-1).$ We have the following relation between roots and coefficients of $g_{\tilde{B}_{4}}(x):$
    \begin{align}\label{sumalphafor case1 l=2}
        \alpha_{1}+\alpha_{2}+\alpha_{3}=&3n-1,\\
\alpha_{1}\alpha_{2}+\alpha_{1}\alpha_{3}+\alpha_{2}\alpha_{3}=&2(n-1),\notag\\
        \alpha_{1}\alpha_{2}\alpha_{3}=&-4n(n-1).\notag
         \end{align}
Since, $\alpha_{1}\alpha_{2}\alpha_{3}<0,$   $g_{\tilde{B}_{4}}(x)$ has either one or three   negative roots. But, $\alpha_{1}+\alpha_{2}+\alpha_{3}>0.$ Thus, all $\alpha_{i}$s can not be negative.  Hence, $g_{\tilde{B}_{4}}(x)$  must have exactly one negative root and $2$ positive roots\\
Assume that $\alpha_{3}<0<\alpha_{2}\leq \alpha_{1}.$
Now, $g_{\tilde{B}_{4}}(-n)=n(-4n^{2}+3n-2)<0$ and $g_{\tilde{B}_{4}}(-1)=4n^{2}-9n+2>0.$ Thus, we have, $-n<\alpha_{3}<-1.$ \\
From equation (\ref{sumalphafor case1 l=2}),
\begin{equation} \label{eq:alpha-sum-squared for case 1,l=2}
(\alpha_{1}+\alpha_{2}+|\alpha_{3}|)^{2}=(3n-1)^{2}-2\alpha_{1}\alpha_{3}-2\alpha_{2}\alpha_{3}+2\alpha_{1}|\alpha_{3}|+2\alpha_{2}|\alpha_{3}|.
\end{equation}

We have, $$-\alpha_{3}>1.$$
Thus,$$3n-1-\alpha_{3}>3n-1+1.$$
Hence, \begin{equation}\label{step3 case 1 l=2}
(3n-1-\alpha_{3})|\alpha_{3}|>3n|\alpha_{3}|>3n>2n.
\end{equation}
Using equation (\ref{sumalphafor case1 l=2}), inequality (\ref{step3 case 1 l=2}) becomes, $$2n<(\alpha_{1}+\alpha_{2})|\alpha_{3}|.$$
Therefore, $$(1-3n)^{2}+8n<(3n-1)^{2}+4\alpha_{1}|\alpha_{3}|+4\alpha_{2}|\alpha_{3}|,$$
 $$(n-1)+2n+\sqrt{(1-3n)^{2}+8n}< (n-1)+2n+\sqrt{(3n-1)^{2}+4\alpha_{1}|\alpha_{3}|+4\alpha_{2}|\alpha_{3}|}. $$
Thus, by equation (\ref{eq:alpha-sum-squared for case 1,l=2}),
\begin{equation*}
(3n-1)+\sqrt{(1-3n)^{2}+8n}\leq (3n-1)+\alpha_{1}+\alpha_{2}+|\alpha_{3}|.
\end{equation*}
 By Corollaries \ref{energy k coalencence for all cases} and  \ref{e spectrum of k coalencence l=2, case 1},
 \begin{equation*}
 E_{\epsilon}(K_{2n}\circ_{n}K_{2n})\leq E_{\epsilon}(K_{2n}\circ_{n}K_{2n}\backslash \{e\}).
\end{equation*}

\end{proof}

\begin{theorem}\label{energy change general case 1. green edge.}
Let $G=K_{2n}\circ_{n} K_{2n}\circ_{n}\cdots \circ_{n} K_{2n},$ ($l$ copies of $K_{2n}$) $n\geq 3,$ $l\geq 3.$ Then,
$E_{\epsilon}(G)\leq E_{\epsilon}(G\backslash\{e\}).$    
\end{theorem}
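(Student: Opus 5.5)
We use the trace identity: since $\epsilon(G)$ has zero diagonal, its eigenvalues sum to zero, so $E_{\epsilon}(G)=2\sum_{\epsilon_i<0}|\epsilon_i|$. The plan is to write both energies in this form and compare only the negative parts.

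\emph{Energy of $G\backslash\{e\}$.} By Lemma \ref{spectrum of coalesence, general, case 1}, the spectrum of $\epsilon(G\backslash\{e\})$ consists of $-1$ (at least $n-3$ times), $-2$ (at least once), $0$ (multiplicity $(n-1)l$), $-2n$ (multiplicity $l-1$), and the roots $\beta_1,\beta_2,\beta_3$ of
$$P_{\tilde{B_{4}}}(x)=x^{3}+(n-2nl+1)x^{2}+(2n+ln^{2}-2n^{2}-2)x+2ln^{2}-4n .$$
The product of the roots is $-(2ln^2-4n)<0$ and their sum is $2nl-n-1>0$. So exactly one root is negative, say $\beta_3<0<\beta_2\le\beta_1$, as in the proof of Theorem \ref{case1 green edge deleting energy change , l=2}. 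Counting the explicit eigenvalues gives
$$E_{\epsilon}(G\backslash\{e\})=(n-3)+2+2n(l-1)+\beta_1+\beta_2+|\beta_3|.$$
The trace of $\tilde{B_4}$ gives $\beta_1+\beta_2=2nl-n-1+|\beta_3|$, hence
$$E_{\epsilon}(G\backslash\{e\})=4nl-2n-2+2|\beta_3|.$$
Next, $P_{\tilde{B_4}}(-1)=ln^{2}+2n^{2}-2nl-5n+2$, which is positive for $n\ge3$, $l\ge3$ (e.g. $ln^2-2nl=nl(n-2)\ge 3n$). Together with $P_{\tilde{B_4}}(x)\to-\infty$ as $x\to-\infty$, this gives $\beta_3<-1$, so $|\beta_3|>1$.

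\emph{Energy of $G$.} By Corollary \ref{k2n l times coalesence spectra }, the negative eigenvalues of $\epsilon(G)$ are $-1$ ($n-1$ times), $-2n$ ($l-1$ times), and possibly $x_{l,n}$ from Corollary \ref{inertia for n coalecence of K2n, for all cases}. Hence
$$E_{\epsilon}(G)=4nl-2n-2+2\max\{0,-x_{l,n}\}.$$
This agrees with Corollary \ref{energy k coalencence for all cases}: it gives $2(n(2l-1)-1)$ when $x_{l,n}\ge 0$, and the square-root formula otherwise.

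\emph{Comparison.} The inequality reduces to $|\beta_3|\ge\max\{0,-x_{l,n}\}$. By the case analysis of Corollary \ref{inertia for n coalecence of K2n, for all cases}, for $n\ge3$, $l\ge3$ we have $x_{l,n}\ge0$ except when $(l,n)=(3,3)$. Outside that case the inequality holds (strictly, since $|\beta_3|>1$).

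The only step needing care is the exceptional case $(l,n)=(3,3)$. There I would show $-x_{3,3}<1$ directly: $x_{3,3}=\tfrac{14-\sqrt{208}}{2}\approx-0.21$, or equivalently $14-\sqrt{208}>-2$. Since $|\beta_3|>1$, the result follows there too. The main obstacle I anticipate is making sure the positivity of $P_{\tilde{B_4}}(-1)$ and the sign pattern of $x_{l,n}$ are stated cleanly over the whole range $n,l\ge3$, since the paper's inertia corollary splits into many cases.
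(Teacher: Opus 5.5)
Your proof is correct and follows the paper's route: Vieta on $P_{\tilde{B_4}}$ gives $E_{\epsilon}(G\backslash\{e\})=4nl-2n-2+2|\beta_3|$, which you compare with $E_{\epsilon}(G)$. It is more careful than the paper, which uses $E_{\epsilon}(G)=2\big(n(2l-1)-1\big)$ for all $n,l\ge 3$ although this fails at $(l,n)=(3,3)$, where $E_{\epsilon}(G)=14+\sqrt{208}$; your bound $|\beta_3|>1$ from $P_{\tilde{B_4}}(-1)>0$ is what closes that case.
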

\begin{proof}
    Let $\beta_{1},\beta_{2},$ and $\beta_{3}$ be the roots of  the polynomial (\ref{P(B4) for case 1, general}). We have the following relations between the roots and coefficients of (\ref{P(B4) for case 1, general}):\\
    \begin{align}\label{relation between roots and coefficients in general l, case1}
        \beta_{1}+\beta_{2}+\beta_{3}=&2nl-n-1\\
        \beta_{1}\beta_{2}+ \beta_{1}\beta_{3}+ \beta_{2}\beta_{3}=&2n+ln^{2}-2n^{2}-2\notag\\
         \beta_{1}\beta_{2}\beta_{3}=&2n(2-ln)<0.\notag
    \end{align}
    Since $\beta_{1}\beta_{2}\beta_{3}=2n(2-ln)<0,$ the polynomial $P_{\tilde{B^{4}}}(x)$ has either one or three negative roots. As $\beta_{1}+\beta_{2}+\beta_{3}=2nl-n-1>0,$ all $\beta_{i}$'s can not be negative. Thus, $P_{\tilde{B_{4}}}(x)$ has exactly one negative and $2$ positive roots. Assume, that $\beta_{3}<0<\beta_{2}\leq \beta_{1}.$
    Now, \begin{align*}
        |\beta_{1}|+ |\beta_{2}|+ |\beta_{3}|=&\beta_{1}+\beta_{2}+|\beta_{3}|.
    \end{align*}
   Then, by (\ref{relation between roots and coefficients in general l, case1}) 
    \begin{align}\label{case1 general case, sum of mod alpha i's}
        |\beta_{1}|+ |\beta_{2}|+ |\beta_{3}|=&2nl-n-1+2|\beta_{3}|.
    \end{align}
  By Corollary\ref{energy k coalencence for all cases},
 \begin{align*}
   E_{\epsilon}(G)=&2\big(n(2l-1)-1\big)\\
                =&(n-1)+2n(l-1)+\big(n(2l-1)-1\big)\\
                <&(n-1)+2n(l-1)+\big(n(2l-1)-1\big)+2|\beta_{3}|.\\
                \end{align*}
  Thus, by \ref{case1 general case, sum of mod alpha i's},              
\begin{align*}
   E_{\epsilon}(G)<&(n-1)+2n(l-1)+|\beta_{1}|+|\beta_{2}|+|\beta_{3}|.\\
                \end{align*}
By Lemma \ref{spectrum of coalesence, general, case 1},
$$E_{\epsilon}(G)<E_{\epsilon}(G\backslash\{e\}).$$
\end{proof}
From Theorems \ref{case1 green edge deleting energy change , l=2} and \ref{energy change general case 1. green edge.}, 
we obtain the following result.
\begin{theorem}\label{final conclusion for case 1 green edge}
    Let $G=K_{2n}\circ_{n} K_{2n}\circ_{n}\cdots \circ_{n} K_{2n},$ (l copies of $K_{2n}$) $n\geq 3,$ $l\geq 2.$ Then,
$E_{\epsilon}(G)\leq E_{\epsilon}(G\backslash\{e\}).$  
\end{theorem}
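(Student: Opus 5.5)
The statement is obtained by splitting on the number of copies $l$. The two cases $l=2$ and $l\geq 3$ are already covered by Theorem \ref{case1 green edge deleting energy change , l=2} and Theorem \ref{energy change general case 1. green edge.}, so the plan is to combine them and check that the two hypotheses really exhaust $l\geq 2$, $n\geq 3$.

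\textbf{Case $l=2$.} Theorem \ref{case1 green edge deleting energy change , l=2} applies directly. It gives $E_{\epsilon}(K_{2n}\circ_{n}K_{2n})< E_{\epsilon}(K_{2n}\circ_{n}K_{2n}\backslash\{e\})$ for every $n\geq 3$ and every edge $e$ of the common clique $K_n$. In particular the weak inequality holds.

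\textbf{Case $l\geq 3$.} Theorem \ref{energy change general case 1. green edge.} gives $E_{\epsilon}(G)\leq E_{\epsilon}(G\backslash\{e\})$. The one point to verify is that the closed form $E_{\epsilon}(G)=2(n(2l-1)-1)$ used there is valid for all $n\geq 3$, $l\geq 3$. Corollary \ref{energy k coalencence for all cases} supplies this formula only when $l=3,n>4$, or $l=4,n>3$, or $l>4$. In the remaining subcases ($l=3$, $n\in\{3,4\}$, and $l=4$, $n=3$) the energy is instead $(n(2l-1)-1)+\sqrt{\Delta}$, where $\Delta=(1-n(2l-1))^{2}-4(n^{2}(l-2)-2n(l-1))$.

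\textbf{Handling the exceptional subcases.} Here the negative root $x_{l,n}$ of the quadratic is $\leq 0$. I would note that $\sqrt{\Delta}=\rho-x_{l,n}$, where $\rho=\rho_\epsilon(G)$ and $\rho+x_{l,n}=n(2l-1)-1$. This gives $\sqrt{\Delta}=n(2l-1)-1+2|x_{l,n}|$. For these three small cases I would then check numerically that
\[
2|x_{l,n}|\leq 2|\beta_3|,
\]
or directly that $(n-1)+2n(l-1)+\sum|\beta_i|\geq E_\epsilon(G)$ using $\sum|\beta_i|=2nl-n-1+2|\beta_3|$. For $(l,n)=(3,4)$ and $(4,3)$ we have $x_{l,n}=0$, so the bound is immediate. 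Only $(l,n)=(3,3)$ needs an actual check: there $x_{3,3}=(14-\sqrt{184})/2\approx -0.22$, and it suffices to show $|\beta_3|>0.22$. This follows from a sign change of $P_{\tilde B_4}$ between $-1$ and a nearby point.

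\textbf{Conclusion.} With these subcases covered, the two theorems together give $E_{\epsilon}(G)\leq E_{\epsilon}(G\backslash\{e\})$ for all $n\geq3$, $l\geq2$. The main obstacle is only this bookkeeping of the exceptional small cases. The general argument goes through once the energy of $G$ is at most $(n-1)+2n(l-1)+2nl-n-1+2|\beta_3|$.
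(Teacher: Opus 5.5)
Your proposal is correct and follows the paper's route: the paper proves this theorem simply by combining the $l=2$ theorem with the $l\geq 3$ theorem.

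Your extra check is more than bookkeeping. The proof of the $l\geq 3$ theorem asserts $E_{\epsilon}(G)=2(n(2l-1)-1)$, and this is false at $(l,n)=(3,3)$. There $x_{3,3}<0$ and $E_{\epsilon}(G)=14+\sqrt{208}\approx 28.42$, so your patch closes a genuine gap in that cited proof. At $(3,4)$ and $(4,3)$ the formula stays valid because $x_{l,n}=0$, as you note.

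Two corrections to the patch:
\begin{itemize}
\item $\Delta=196+12=208$, not $184$; with $\sqrt{184}$ you would actually get $x_{3,3}>0$. The correct value is $x_{3,3}=(14-\sqrt{208})/2\approx -0.211$.
\item The sign change can be made explicit. For $(l,n)=(3,3)$ one has $P_{\tilde B_4}(x)=x^3-14x^2+13x+42$, with $P_{\tilde B_4}(-1)=14>0>-48=P_{\tilde B_4}(-2)$. Hence $\beta_3\in(-2,-1)$ and $|\beta_3|>1$.
\end{itemize}

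The same comparison also works uniformly for all $n\geq 3$, $l\geq 2$:
\begin{itemize}
\item $P_{\tilde B_4}(-1)=(n-2)(ln+2n-1)>0$, so $|\beta_3|>1$.
\item The quadratic $x^2+x(1-n(2l-1))+n^2(l-2)-2n(l-1)$ equals $n+n^2(l-2)>0$ at $x=-1$, so $x_{l,n}>-1$.
\end{itemize}
Together these give
\[
E_{\epsilon}(G)=2(n(2l-1)-1)+2\max\{0,-x_{l,n}\}<2(n(2l-1)-1)+2|\beta_3|=E_{\epsilon}(G\backslash\{e\}).
\]
This one argument would even subsume the paper's separate treatment of $l=2$.
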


\subsection{Eccentricity energy of  \texorpdfstring{$K_{2n}\circ_{n} K_{2n}\circ_{n}\cdots \circ_{n}K_{2n}\backslash\{e\}$} ffor case 2 }
This section analyzes the eccentricity eigenvalues of
$G=K_{2n}\circ_{n} K_{2n}\circ_{n}\cdots \circ_{n} K_{2n}\backslash\{e\}$  and compares the resulting eccentricity energy with that of the original graph.

\begin{lemma} \label{e spectrum of k coalecence general, case 2}
    Let $G=K_{2n}\circ_{n} K_{2n}\circ_{n}\cdots \circ_{n} K_{2n}\backslash\{e\}$  ($l$ copies of $K_{2n}$) $,n\geq 3, l\geq 2.$ Then, the spectrum of $\epsilon(G)$ consists of: 
 \begin{enumerate}
     \item $-1$ with multiplicity at least $n-2.$
     \item $0$ with multiplicity at least $(n-1)(l-1)+n-2.$
     \item $-2n$ with multiplicity al least $l-2.$
     \item Roots of the polynomial $x^{5}+\big(-2nl+3n+2     \big)x^{4}+\big( -3nl+7n-3ln^{2}-3 \big)x^{3}+\big( -12n+6ln-4ln^{2}+2ln^{3}+4n^{2}-4n^{3}-4    \big)x^{2}+\big( -12n-4ln+8ln^{2}+4ln^{3}-4n^{2}-4n^{3}+4    \big)x+-8ln^{3}+32ln^{2}-24ln+16n^{3}-48n^{2}+32n.$
 \end{enumerate}
\end{lemma}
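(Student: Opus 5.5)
The plan is to follow the template of Lemma \ref{spectrum of coalesence, general, case 1}: write down $\epsilon(G)$ explicitly, produce the small eigenvalues from vectors supported inside single blocks, and reduce the rest to a quotient matrix.

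\textbf{Step 1: the eccentricity matrix.} Let $e=uv$ with $u$ in the common clique $K_{n}$ and $v$ in $V(K_{2n})\backslash V(K_{n})$ of the first copy. After deleting $e$, the graph still has diameter $2$, and $d(u,v)=2$. Hence $e(u)=e(v)=2$, the other $n-1$ clique vertices still have eccentricity $1$, and every outer vertex has eccentricity $2$. Partition $V(G)$ into six sets:
\begin{itemize}
\item $C'$, the $n-1$ clique vertices other than $u$;
\item $\{u\}$;
\item $\{v\}$;
\item $R$, the other $n-1$ outer vertices of the first copy;
\item $P_{2},\ldots,P_{l}$, the outer vertex sets of the other copies, each of size $n$.
\end{itemize}
Comparing each distance with $\min\{e(\cdot),e(\cdot)\}$ gives the blocks of $\epsilon(G)$:
\begin{itemize}
\item $(J-I)$ on $C'$, and $J$ between $C'$ and every other set;
\item $\epsilon_{uv}=2$, and $u$ has entry $0$ to $R$ and to every $P_{j}$, since $d=1<2$;
\item $v$ has entry $0$ to $R$ and $2J$ to every $P_{j}$;
\item $0$ inside $R$ and inside each $P_{j}$, and $2J$ between $R$ and $P_{j}$ and between distinct $P_{i},P_{j}$.
\end{itemize}
Before using this block form I would recheck each entry once against the definition.

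\textbf{Step 2: explicit eigenvectors.}
\begin{itemize}
\item Vectors $e_{a}-e_{b}$ with $a,b\in C'$ are eigenvectors for $-1$, since the diagonal block is $J-I$ and all off-diagonal blocks are constant. This gives $n-2$ independent vectors.
\item Vectors $e_{a}-e_{b}$ with $a,b\in R$ are annihilated by $\epsilon(G)$, giving $n-2$ independent vectors.
\item Vectors $e_{a}-e_{b}$ with $a,b\in P_{j}$, for $j=2,\ldots,l$, are also annihilated, giving $(n-1)(l-1)$ further vectors.
\end{itemize}
Together these give $0$ with multiplicity at least $(n-1)(l-1)+n-2$. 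All of these vectors are orthogonal to the $l+3$ block-indicator vectors. So, by the orthogonal-eigenbasis argument of Theorem \ref{k coalesence,general,l times }, the remaining $l+3$ eigenvalues are those of the $(l+3)\times(l+3)$ quotient matrix $B$ with respect to this partition.

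\textbf{Step 3: reducing the quotient.} Inside $B$, the vectors $\mathbf{1}_{P_{2}}-\mathbf{1}_{P_{j}}$ (coordinates in the quotient) satisfy $Bu=-2nu$, because the $P_{j}$ rows are symmetric with diagonal $0$ and mutual entries $2n$. This gives $-2n$ with multiplicity at least $l-2$. Merging $P_{2},\ldots,P_{l}$ into one part then gives an equitable $5\times5$ quotient $\tilde B$, with rows indexed by $C',u,v,R,P$:
\[
\tilde B=\begin{pmatrix}
n-2&1&1&n-1&n(l-1)\\
n-1&0&2&0&0\\
n-1&2&0&0&2n(l-1)\\
n-1&0&0&0&2n(l-1)\\
n-1&0&2&2(n-1)&2n(l-2)
\end{pmatrix}.
\]
By Theorem \ref{equitable quotient matrix eigenvalue thm}, its five eigenvalues are eigenvalues of $\epsilon(G)$. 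The counts now add up to $n-2+(n-1)(l-1)+n-2+l-2+5=(l+1)n$, the order of $\epsilon(G)$.

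\textbf{Main obstacle.} The hard part is expanding $\det(xI-\tilde B)$ and matching it with the stated quintic. I would do this by cofactor expansion along the sparse $u$-row, or by a Schur-complement step on the $\{u,v\}$ block as in Corollary \ref{k2n l times coalesence spectra }, and then check the result at small values such as $n=3,l=2$. If the entries in Step 1 are correct, any mismatch must come from the expansion. I would also evaluate the quintic at $x=-2n$; if it is nonzero, the multiplicity of $-2n$ is exactly $l-2$, although the statement only asks for ``at least''.
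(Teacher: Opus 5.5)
Your proposal is correct and is essentially the paper's proof. It uses the same block form of $\epsilon(G)$; the paper merely orders the parts as $u, C', v, R, P_2,\ldots,P_l$. It also uses the same block-difference eigenvectors, the same $(l+3)\times(l+3)$ quotient $B_5$ with the $l-2$ eigenvectors for $-2n$ split off, and a $\tilde B$ that is the paper's $\tilde{B_5}$ up to a simultaneous permutation of rows and columns.

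The determinant expansion you flag as the main obstacle does yield the stated quintic. For instance, $\det\tilde B=8n(n-1)(nl-3l-2n+4)$ gives the constant term, and the principal-minor sums give the other coefficients. So only this routine computation is left, and the paper does not show it either.
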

\begin{proof}
    By suitable labelling of vertices 
$$\epsilon(K_{2n}\circ_{n} K_{2n}\circ_{n}\cdots\circ_{n}K_{2n}\backslash\{e\})=\begin{pmatrix}
        0_{1\times 1} & J_{1\times n-1} &2_{1\times 1}&0_{1\times n-1}&0_{1\times n}&\cdots&0_{1\times n}\\
        J_{n-1\times 1}&(J-I)_{n-1\times n-1}&J_{n-1\times 1}&J_{n-1\times n-1}&J_{n-1\times n}&\cdots&J_{n-1\times n}\\
        2_{1\times 1}&J_{1\times n-1}&0_{1\times 1}&0_{1\times n-1}&2J_{1\times n}&\cdots&2J_{1\times n}\\
        0_{n-1\times 1}&J_{n-1\times n-1}&0_{n-1\times 1}&0_{n-1\times n-1}&2J_{n-1\times n}&\cdots&2J_{1\times n}\\
        0_{n\times 1}&J_{n\times n-1}&2J_{n\times 1}&2J_{n\times n-1}&0_{n\times n}&\cdots&2J_{1\times n}\\
        \vdots&\vdots&\vdots&\vdots&\vdots&\ddots&\vdots\\
        0_{n\times1}&J_{n\times n-1}&2J_{n\times1}&2J_{n\times n-1}&2J_{n\times n}&\cdots&0_{n\times n}
    \end{pmatrix}_{n(l+1)\times n(l+1)}.$$
For $i=2,3,\ldots,n-1,$ consider the vector $$x^{(i)}=(x_{1,1},x_{2,1},x_{2,2},\ldots,x_{2,n-1},x_{3,1},x_{4,1},x_{4,2},\ldots,x_{4,n-1},x_{5,1},x_{5,2},\ldots,x_{5,n},\cdots x_{l+3,1},x_{l+3,2},\ldots,x_{l+3,n})$$ 
such that $x_{2,1}=1,$ $x_{2,i}=-1$  and $x_{u,v}=0,$ for $x_{u,v}\neq x_{2,1}, x_{2,i}.$ Then, the vectors $x^{(i)}$s are linearly independent, and $\epsilon(K_{2n}\circ_{n} K_{2n}\circ_{n} \cdots\circ_{n} K_{2n}\backslash\{e\})x^{(i)}=(-1)x^{(i)}.$ Thus, $-1$ is an eigenvalue of $\epsilon(K_{2n}\circ_{n} K_{2n}\circ_{n} \cdots\circ_{n} K_{2n}\backslash\{e\})$ with multiplicity at least $n-2.$\\

\noindent For $ t=2,3,\ldots, n-1,$ consider the vector,
$$y^{(4,t)}=(y_{1,1},y_{2,1},y_{2,2},\ldots,y_{2,n-1},y_{3,1}, y_{4,1}, y_{4,2},\ldots, y_{4,n-1},y_{5,1},y_{5,2},\ldots,y_{5,n},\cdots ,y_{l+3,1},y_{l+3,2},\ldots,y_{l+3,n})$$ 
such that $y_{4,1}=1,$ $y_{4,t}=-1$  and $y_{p,q}=0,$ for $y_{p,q}\neq y_{4,1}, y_{4,t}.$ \\

For $j=5,6,\ldots,l+3,$
consider the vector $$z^{(j,s)}=(z_{1,1},z_{2,1},z_{2,2},\ldots,z_{2,n-1},z_{3,1},z_{4,1},z_{4,2},\ldots,z_{4,n-1},z_{5,1},z_{5,2},\ldots,z_{5,n},\cdots z_{l+3,1},z_{l+3,2},\ldots,z_{l+3,n})$$ 
such that $z_{j,1}=1,$ $z_{j,s}=-1,$ for $2\leq s\leq n,$ and $z_{r,f}=0,$ for $z_{r,f}\neq z_{j,1}, z_{j,i}.$ 
Then, $\epsilon(K_{2n}\circ_{n} K_{2n}\circ_{n}\cdots\circ_{n} K_{2n}\backslash\{e\})y^{(j,i)}=0=\epsilon(K_{2n}\circ_{n} K_{2n}\circ_{n} \cdots \circ_{n} K_{2n}\backslash\{e\})y^{(j,i)},$
and the vectors $y^{(j,i)},$ $y^{(4,i)}$ are linearly independent. Thus,  $0$ is an eigenvalue of $\epsilon(K_{2n}\circ_{n} K_{2n}\circ_{n} \cdots\circ_{n} K_{2n}\backslash\{e\})$ with multiplicity atleast $(n-1)(l-1)+n-2.$\\
Note that all the eigenvectors constructed so far are orthogonal to 
$$\begin{pmatrix}
    J_{1\times 1}\\
    0_{n-1\times 1}\\
    0_{1\times 1}\\
    0_{n-1\times 1}\\
    0_{n\times 1}\\
    \vdots\\
    0_{n\times 1}
\end{pmatrix},
\begin{pmatrix}
    0_{1\times 1}\\
    J_{n-1\times 1}\\
    0_{1\times 1}\\
    0_{n-1\times 1}\\
     0_{n\times 1}\\
    \vdots\\
    0_{n\times 1}
\end{pmatrix},
\begin{pmatrix}
    0_{1\times 1}\\
    0_{n-1\times 1}\\
    J_{1\times 1}\\
    0_{n-1\times 1}\\
     0_{n\times 1}\\
    \vdots\\
    0_{n\times 1}
\end{pmatrix}
\cdots
\begin{pmatrix}
    0_{1\times 1}\\
    0_{n-1\times 1}\\
    0_{1\times 1}\\
    0_{n-1\times 1}\\
     0_{n\times 1}\\
    \vdots\\
    J_{n\times 1}
\end{pmatrix}.$$
Since $\epsilon(K_{2n}\circ_{n} K_{2n}\circ_{n}\cdots\circ_{n} K_{2n}\backslash\{e\})$ is a  symmetric matrix of order $n(l+1)$, $\mathbb{R}^{n(l+1)}$ has an orthogonal basis consisting of eigenvectors of $\epsilon(K_{2n}\circ_{n} K_{2n}\circ_{n} \cdots\circ_{n} K_{2n}\backslash\{e\}).$ 
Hence, the remaining $l+3$ vectors are spanned by these $l+3$ vectors.
Thus, if $\sigma_{2}$ is an eigenvalue of  $\epsilon(K_{2n}\circ_{n} K_{2n}\circ_{n}\cdots\circ_{n} K_{2n}\backslash\{e\})$ with an eigenvector $\zeta_{2},$  of the form
$\begin{pmatrix}
    c_{1}J_{1\times 1}\\
    c_{2}J_{n-1\times 1}\\
    c_{3}J_{1\times 1}\\
    c_{4}J_{n-1\times 1}\\
    \vdots\\
    c_{l+3}J_{n\times 1}
\end{pmatrix},$ 
where $c_{i}\in \mathbb{R},$ then from  $\epsilon(K_{2n}\circ_{n} K_{2n}\circ_{n} \cdots\circ_{n} K_{2n}\backslash\{e\})\zeta_{2}=\sigma_{2}\zeta_{2}, $ it follows  that the remaining $l+3$ eigenvalues are eigenvalues of the matrix,
$$B_{5}=\begin{pmatrix}
        0 &n-1&2&0&0&0&0&\cdots&0\\
        1&n-2&1&n-1&n&n&n&\cdots&n\\
        2&n-1&0&0&2n&2n&2n&\cdots&2n\\
        0&n-1&0&0&2n&2n&2n&\cdots&2n\\
        0&n-1&2&2(n-1)&0&2n&2n&\cdots&2n\\
        0&n-1&2&2(n-1)&2n&0&2n&\cdots&2n\\
        0&n-1&2&2(n-1)&2n&2n&0&\cdots&2n\\
        \vdots&\vdots&\vdots&\vdots&\vdots&\vdots&\vdots&\ddots&\vdots\\
        0&n-1&2&2(n-1)&2n&2n&0&\cdots&0\\
    \end{pmatrix}_{l+3\times l+3}.$$    
 Now, for $h=5,6,\ldots l+3,$  consider the vector $$w^{(h)}=(w_{1},w_{2},w_{3},w_{4},w_{5},\ldots,w_{l+3})$$ 
 such that $w_{5}=1,$ $w_{h}=-1$ and $w_{d}=0$ for $d\neq 5,h.$
 Then, $B_{6}w^{(h)}=(-2n)w^{(h)}.$ Since all $w^{(h)}$s are linearly independent, $-2n$ is an eigenvalue of $B_{5}$ with multiplicity at least $l-2.$ 

 The equitable quotient matrix of $B_{5}$ is $\tilde{B_{5}}=\begin{pmatrix}
     0 &n-1&2&0&0\\
        1&n-2&1&n-1&n(l-1)\\
        2&n-1&0&0&2n(l-1)\\
        0&n-1&0&0&2n(l-1)\\
        0&n-1&2&2(n-1)&2n(l-2)
       \end{pmatrix}.$
The characteristic polynomial of $\tilde{B_{5}}$ is
  $P_{\tilde{B{5}}}(x)=x^{5}+\big(-2nl+3n+2     \big)x^{4}+\big( -3nl+7n-3ln^{2}-3 \big)x^{3}+\big( -12n+6ln-4ln^{2}+2ln^{3}+4n^{2}-4n^{3}-4    \big)x^{2}+\big( -12n-4ln+8ln^{2}+4ln^{3}-4n^{2}-4n^{3}+4    \big)x+-8ln^{3}+32ln^{2}-24ln+16n^{3}-48n^{2}+32n.$
Moreover, $P_{\tilde{B{5}}}(-2n)=8n(5n-3)(l-1)>0.$
Therefore, the eigenvalues of $B_{5}$ are $-2n$ with multiplicity $l-2$ and roots of the polynomial $P_{\tilde{B{5}}}(x).$ The result follows from \ref{equitable quotient matrix eigenvalue thm}.

  
\end{proof}
\begin{corollary}\label{case 2 blue edge deleting spectrum}
    The eigenvalues of the eccentricity matrix of $K_{2n}\circ_{n} K_{2n}\backslash\{e\}$ are $-1$ with multiplicity $n-2,$ $0$ with multiplicity $2n-3,$ and the remaining eigenvalues are roots of the  equation $x^{5}+(2-n)x^{4}+(-6n^{2}+n-3)x^{3}+(-4n^{2}-4)x^{2}+(4n^{3}+12n^{2}-20n+4)x+16n(n-1)=0.$
\end{corollary}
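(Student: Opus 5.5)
The plan is to obtain this corollary by specializing Lemma \ref{e spectrum of k coalecence general, case 2} to $l=2$, so almost no new spectral work is needed. First I check that the lemma's construction makes sense at $l=2$. In that case the block matrix $\epsilon(G)$ has only one full block $J_{n\times n}$ of vertices outside the first $K_{2n}$, so there are no vectors $w^{(h)}$ and the eigenvalue $-2n$ occurs with multiplicity $l-2=0$. Also, $B_{5}$ is then a $5\times 5$ matrix that coincides with $\tilde{B_{5}}$; its last diagonal entry is $2n(l-2)=0$. Hence the remaining $5$ eigenvalues are exactly the roots of $P_{\tilde{B_{5}}}(x)$ evaluated at $l=2$.

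Next I substitute $l=2$ into the coefficients of $P_{\tilde{B_{5}}}(x)$:
\begin{align*}
x^{4}&:\ -4n+3n+2=2-n,\\
x^{3}&:\ -6n+7n-6n^{2}-3=-6n^{2}+n-3,\\
x^{2}&:\ -12n+12n-8n^{2}+4n^{3}+4n^{2}-4n^{3}-4=-4n^{2}-4,\\
x^{1}&:\ -12n-8n+16n^{2}+8n^{3}-4n^{2}-4n^{3}+4=4n^{3}+12n^{2}-20n+4,\\
x^{0}&:\ -16n^{3}+64n^{2}-48n+16n^{3}-48n^{2}+32n=16n(n-1).
\end{align*}
These are exactly the coefficients of the quintic in the statement. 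With $l=2$ the lemma's multiplicity bounds become $n-2$ for $-1$ and $(n-1)(l-1)+n-2=2n-3$ for $0$.

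To upgrade "at least" to the exact multiplicities stated, I count dimensions. The contributions are $(n-2)+(2n-3)+5=3n=n(l+1)$, which is the order of $\epsilon(G)$. So the multiplicities are exact as soon as neither $0$ nor $-1$ is a root of the quintic.
\begin{itemize}
\item The constant term is $16n(n-1)\neq 0$, so $0$ is not a root.
\item A short evaluation gives $P(-1)=-4n^{3}+6n^{2}+2n-4$. For $n\geq 3$ this is negative, since $4n^{3}\geq 12n^{2}>6n^{2}+2n$. Hence $-1$ is not a root either.
\end{itemize}

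The only step needing care is this last exactness check together with the coefficient bookkeeping. Everything else is inherited directly from Lemma \ref{e spectrum of k coalecence general, case 2} and Theorem \ref{equitable quotient matrix eigenvalue thm}.
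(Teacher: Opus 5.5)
Your proposal is correct and follows the paper's route: the corollary is stated right after the general Case~2 lemma and is obtained by setting $l=2$, where $-2n$ drops out, $B_{5}=\tilde{B_{5}}$, and the coefficients specialize exactly as you computed (they also agree with a direct computation of the characteristic polynomial of the $5\times 5$ quotient matrix). Your check that neither $0$ nor $-1$ is a root of the quintic (constant term $16n(n-1)\neq 0$ and $P(-1)=-4n^{3}+6n^{2}+2n-4<0$) is a useful addition: the lemma only gives lower bounds, the corollary asserts exact multiplicities, and the paper leaves this step implicit.
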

\begin{theorem}\label{case2 blue edge deleting energy change , l=2}
Let $G=K_{2n}\circ_{n}K_{2n}\backslash \{e\},n\geq 3.$
    Then, eccentricity energy of $K_{2n}\circ_{n}K_{2n}\backslash \{e\}$ is greater than the eccentricity energy of $K_{2n}\circ_{n}K_{2n}.$
\end{theorem}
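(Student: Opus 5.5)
The plan is to compare $E_{\epsilon}(K_{2n}\circ_{n}K_{2n}\backslash\{e\})$ with $E_{\epsilon}(K_{2n}\circ_{n}K_{2n})$ through spectral radii rather than through the full spectrum.

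\textbf{Step 1: the energy of $G$.} For $l=2$, Corollary \ref{inertia for n coalecence of K2n, for all cases} shows that $\epsilon(K_{2n}\circ_{n}K_{2n})$ has exactly one positive eigenvalue. Call it $r$; it is the larger root of $x^{2}-(3n-1)x-2n$. Since the trace of an eccentricity matrix is $0$, the eigenvalues sum to zero, so the energy equals twice the sum of the positive eigenvalues. Hence $E_{\epsilon}(K_{2n}\circ_{n}K_{2n})=2r$, which agrees with Corollary \ref{energy k coalencence for all cases}.

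\textbf{Step 2: the energy of $G\backslash\{e\}$.} The matrix $\epsilon(K_{2n}\circ_{n}K_{2n}\backslash\{e\})$ also has trace $0$. So its energy is twice the sum of its positive eigenvalues, and in particular it is at least $2\rho'$. Here $\rho'$ is its spectral radius, which is its largest eigenvalue because the matrix is nonnegative. By Corollary \ref{case 2 blue edge deleting spectrum} the only eigenvalues other than $-1$ and $0$ are the roots of
$$f(x)=x^{5}+(2-n)x^{4}+(-6n^{2}+n-3)x^{3}+(-4n^{2}-4)x^{2}+(4n^{3}+12n^{2}-20n+4)x+16n(n-1).$$
Therefore $\rho'$ is the largest real root of $f$. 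It then suffices to prove $\rho'>r$. Since $f$ is monic of odd degree, $f(x)\to+\infty$ as $x\to+\infty$, so $\rho'>r$ follows from $f(r)<0$.

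\textbf{Step 3: the key computation, $f(r)<0$.} This is where I expect the main difficulty. I will reduce $f$ modulo $q(x)=x^{2}-(3n-1)x-2n$, repeatedly substituting $r^{2}=(3n-1)r+2n$. This gives $f(r)=A(n)\,r+B(n)$ for explicit polynomials $A,B$ in $n$. I then need to show $A(n)r+B(n)<0$ for all $n\ge 3$, using the bounds $3n-1<r<3n$. These hold because $q(3n-1)=-2n<0$ and $q(3n)=n>0$; if needed, the sharper bound $r<3n-1+\tfrac{2n}{3n-1}$ is available.

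If the sign of $A$ lets me replace $r$ by the appropriate endpoint, the inequality becomes a polynomial inequality in $n$. I would verify it by checking the small values $n=3,4$ directly and comparing leading terms for larger $n$.

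\textbf{Fallback.} If $f(r)<0$ turns out to be false, I will use the second positive root instead. The constant term of $f$ is positive and the degree is odd, so the product of the roots is $-16n(n-1)<0$. Sign evaluations such as $f(0)>0$ together with a value at some $x_{0}>0$ where $f(x_{0})<0$ would then locate two positive roots $\alpha_{1}\ge\alpha_{2}$. In that case I would aim to prove $\alpha_{1}+\alpha_{2}>r$ by evaluating $f$ at $r-\alpha_{2}$, or by using a lower bound on $\alpha_{2}$ obtained from the sign changes of $f$.

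As an alternative lower bound on $\rho'$, I could use a Rayleigh quotient of the quotient matrix $\tilde{B_{5}}$, symmetrized by the block sizes, with a test vector close to the Perron vector of $G$. Either route gives $E_{\epsilon}(K_{2n}\circ_{n}K_{2n}\backslash\{e\})>2r=E_{\epsilon}(K_{2n}\circ_{n}K_{2n})$.
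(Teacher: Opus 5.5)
Steps 1 and 2 are correct. The gap is in Step 3: the inequality you set out to prove is false for every $n\ge 3$. In Case 2 the $\epsilon$-spectral radius actually \emph{decreases}, so $\rho'>r$ cannot be established.

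For $n=3$ you have $f(x)=x^{5}-x^{4}-54x^{3}-40x^{2}+160x+96$ and $r^{2}=8r+6$. Your own reduction then gives $f(r)=736r+492>0$. Since $f(0)>0$, $f(8)=-160<0$, and $f$ has at most two positive roots, this puts $\rho'\in(8,r)$; numerically $\rho'\approx 8.02<r\approx 8.69$. For general $n$, substituting $x=3n+c$ gives $f(3n+c)=n^{4}(135c+165)+O(n^{3})$. Hence $\rho'=3n-\tfrac{11}{9}+o(1)$, whereas $r=3n-\tfrac13+o(1)$.

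The structural reason is that, unlike Case 3, $\epsilon(G)\le\epsilon(G\backslash\{e\})$ fails entrywise. Write $e=c_{1}a_{1}$ with $c_{1}$ in the common clique. Deleting $e$ raises the eccentricity of $c_{1}$ to $2$, so every $1$ between $c_{1}$ and the private vertices other than $a_{1}$ becomes $0$. Consequently any argument resting only on $E_{\epsilon}(G\backslash\{e\})\ge 2\rho'$ must fail, and that includes your Rayleigh-quotient alternative. The gain in energy is carried by the second positive root $\alpha_{2}$, which is about $1.68$ for $n=3$ and of order $\sqrt{2n/3}$ in general.

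Your fallback targets the right quantity, $\alpha_{1}+\alpha_{2}$, but it is not yet an argument. Evaluating $f$ at $r-\alpha_{2}$ presupposes knowing $\alpha_{2}$, and no concrete bounds are given. The missing idea is to use the zero trace to move the problem to the negative roots, which are easy to trap.

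\begin{itemize}
\item All five roots of $f$ are real, since they are eigenvalues of a symmetric matrix.
\item Descartes' rule for $f(x)$ and $f(-x)$ allows at most two positive and at most three negative roots. So $\alpha_{1}\ge\alpha_{2}>0>\alpha_{3}\ge\alpha_{4}\ge\alpha_{5}$, and the trace gives $\alpha_{1}+\alpha_{2}=(n-2)+|\alpha_{3}|+|\alpha_{4}|+|\alpha_{5}|$.
\item The values $f(-2n-1)<0<f(-2n)=8n(5n-3)$ and $f(-2)=8n^{2}(3-n)\le 0<f(0)$ place one root in $(-2n-1,-2n)$. They also force the other two negative roots to have absolute values summing to at least $2$. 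Note that at $n=3$ one has $f(-2)=0$, so the non-strict inequality is needed there.
\item Therefore $\alpha_{1}+\alpha_{2}>3n>r$, using your own bound $r<3n$. This gives $E_{\epsilon}(G\backslash\{e\})=2(\alpha_{1}+\alpha_{2})>6n>2r=E_{\epsilon}(K_{2n}\circ_{n}K_{2n})$.
\end{itemize}

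This is exactly the paper's argument.
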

\begin{proof}
 Let $\alpha_{1},\alpha_{2},\alpha_{3},\alpha_{4},\alpha_{5}$ be the roots of the polynomial, $g_{\tilde{B}_{5}}(x)=x^{5}+(2-n)x^{4}+(-6n^{2}+n-3)x^{3}+(-4n^{2}-4)x^{2}+(4n^{3}+12n^{2}-20n+4)x+16n(n-1).$ We have the following relation between the roots and coefficients of $g_{\tilde{B}_{5}}(x):$
\begin{align}\label{sumalpha 1 to 5}
\alpha_{1}+\alpha_{2}+\alpha_{3}+\alpha_{4}+\alpha_{5}=&n-2, \\
\sum_{1\leq i<j\leq 5}\alpha_{i}\alpha_{j}=&-6n^{2}+n-3, \notag\\
\sum_{1\leq i<j<k\leq 5}\alpha_{i}\alpha_{j}\alpha_{k}=&4n^{2}+4,  \notag\\
\sum_{1\leq i<j<k<l\leq 5}\alpha_{i}\alpha_{j}\alpha_{k}\alpha_{l}=&4n^{3}+12n^{2}-20n+4,  \notag\\
\alpha_{1}\alpha_{2}\alpha_{3}\alpha_{4}\alpha_{5}=&-16n(n-1).\notag
\end{align}
Since $\alpha_{1}\alpha_{2}\alpha_{3}\alpha_{4}\alpha_{5}<0,$ $g_{\tilde{B}_{5}}(x)$ has either one, three or five negative roots. As, $\alpha_{1}+\alpha_{2}+\alpha_{3}+\alpha_{4}+\alpha_{5}>0,$ all $\alpha_{i}$ cannot be negative. Note that, $g_{\tilde{B}_{5}}(x)$ has only two sign changes. Thus, $g_{\tilde{B}_{5}}(x)$ can have atmost two positive roots. Therefore, $g_{\tilde{B}_{5}}(x)$ has three negative roots.\\
Let $\alpha_{5}\leq \alpha_{4}\leq \alpha_{3}<0<\alpha_{2}\leq \alpha_{1}.$  Also we have, \begin{align*}
    g_{\tilde{B}_{5}}(-2n)=&8n(5n-3)>0,\\
    g_{\tilde{B}_{5}}(-2n-1)=&2(-20n^{4}-18n^{3}+30n^{2}+n-2)<0,\\
    g_{\tilde{B}_{5}}(-2)=&8n^{2}(3-n)<0,\\
    g_{\tilde{B}_{5}}(0)=&16n(n-1)>0.
    \end{align*}
Thus, by the intermediate value theorem,
$$-2n-1<\alpha_{5}<-2n<\alpha_{4}<-2<\alpha_{3}<0.$$
Hence, \begin{equation}\label{lowerbound for sum alpha3alpha4alpha5}
|\alpha_{3}|+|\alpha_{4}|+|\alpha_{5}|\geq 2n+2.
\end{equation}
Now, by Lemma \ref{case 2 blue edge deleting spectrum},
\begin{align*}
    E_{\epsilon}(K_{2n}\circ_{n}K_{2n}\backslash \{e\})=& n-2+\alpha_{1}+\alpha_{2}+|\alpha_{3}|+|\alpha_{4}|+|\alpha_{5}|.
\end{align*}
Using (\ref{sumalpha 1 to 5}), (\ref{lowerbound for sum alpha3alpha4alpha5}), and Corollary \ref{energy k coalencence for all cases},
\begin{align*}
E_{\epsilon}(K_{2n}\circ_{n}K_{2n}\backslash \{e\})=&2n-4+2(|\alpha_{3}|+|\alpha_{4}|+|\alpha_{5}|),   
\end{align*}
\begin{align*}
E_{\epsilon}(K_{2n}\circ_{n}K_{2n}\backslash \{e\})\geq& 2n-4+2(2n+2),\\
                                                  =&6n,\\
                                                  >&(3n-1)+\sqrt{(1-3n)^{2}+8n},
\end{align*}
\begin{align*}
   E_{\epsilon}(K_{2n}\circ_{n}K_{2n}\backslash \{e\})> E_{\epsilon}(K_{2n}\circ_{n}K_{2n}).
\end{align*}
\end{proof}

\begin{theorem} \label{energy change general case 2. blue edge.}
Let $G=K_{2n}\circ_{n} K_{2n}\circ_{n}\cdots \circ_{n} K_{2n}$ ($l$ copies of $K_{2n}$), $n\geq 5,$ $l\geq4.$ Then,
$E_{\epsilon}(G)\leq E_{\epsilon}(G\backslash\{e\}).$       
\end{theorem}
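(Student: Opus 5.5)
For $n\geq 5$ and $l\geq 4$ we are in the first case of Corollary \ref{energy k coalencence for all cases}, so
$E_{\epsilon}(G)=2\big(n(2l-1)-1\big)$.
This value is twice the sum of the positive $\epsilon$-eigenvalues of $G$. It is also equal to twice the sum of the absolute values of the negative ones: the $n-1$ eigenvalues $-1$, the $l-1$ eigenvalues $-2n$, and the trace identity.

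Lemma \ref{e spectrum of k coalecence general, case 2} gives the spectrum of $\epsilon(G\backslash\{e\})$: $-1$ with multiplicity $n-2$, $-2n$ with multiplicity $l-2$, zeros, and the five roots $\alpha_1,\dots,\alpha_5$ of $P_{\tilde{B_{5}}}(x)$. Since $\epsilon(G\backslash\{e\})$ has trace $0$,
$$E_{\epsilon}(G\backslash\{e\})=2\Big((n-2)+2n(l-2)+\sum_{\alpha_i<0}|\alpha_i|\Big).$$
From the characteristic polynomial, $\sum_i\alpha_i=2nl-3n-2$. So it suffices to show that the negative roots of $P_{\tilde{B_{5}}}$ have absolute values summing to at least $2n+1$, since
$$(n-1)+2n(l-1)=(n-2)+2n(l-2)+2n+1.$$

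The plan is to locate the roots of $P_{\tilde{B_{5}}}$ by sign evaluations and the intermediate value theorem, mirroring the proof of Theorem \ref{case2 blue edge deleting energy change , l=2}. The constant term is
$$P_{\tilde{B_{5}}}(0)=-8ln^{3}+32ln^{2}-24ln+16n^{3}-48n^{2}+32n=-8n(n-1)\big(l(n-3)-2(n-2)\big),$$
which is negative for $n\geq 5$ and $l\geq 4$. The lemma gives $P_{\tilde{B_{5}}}(-2n)=8n(5n-3)(l-1)>0$.

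I would then compute $P_{\tilde{B_{5}}}$ at a point $-2n-c$ for a small constant $c$ (try $c=1$) and show the value is negative there, using $l\geq4$, $n\geq5$. Together with $P_{\tilde{B_{5}}}(0)<0$, this gives one root in $(-2n-1,-2n)$ and one more negative root in $(-2n,0)$. The sum of absolute values of these two roots already exceeds $2n$.

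To reach $2n+1$, I would look for a second bound point $-t$ with $t\ge 1$ (try $t=1$ or $2$) where $P_{\tilde{B_{5}}}(-t)>0$. This forces the root in $(-2n,0)$ to lie in $(-2n,-t)$, and the total is then at least $2n+1$.

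If the sign pattern instead yields three negative roots, as in the $l=2$ case, the bound only improves. Descartes' rule, or the sign of the coefficients, would be used to confirm how many positive roots there are, though the argument does not need this.

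The main obstacle is the explicit sign verification of $P_{\tilde{B_{5}}}(-2n-1)$ and $P_{\tilde{B_{5}}}(-t)$ as polynomials in two variables. I would substitute $n=5+p$ and $l=4+q$ with $p,q\geq0$ and check that all resulting coefficients have the correct sign. This substitution is presumably also where the hypotheses $n\geq5$ and $l\geq4$ are needed. If a clean point $-t$ fails, the fallback is to use a Vieta identity, for instance the second symmetric function, to bound the product or sum of the negative roots from below.
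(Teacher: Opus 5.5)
Your reduction via the trace identity is correct. You have $E_{\epsilon}(G)=2\big((n-1)+2n(l-1)\big)$ and $E_{\epsilon}(G\backslash\{e\})=2\big((n-2)+2n(l-2)+S\big)$, where $S$ is the sum of $|\alpha_i|$ over the negative roots of $P_{\tilde{B_{5}}}$, so everything reduces to $S\ge 2n+1$.

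The problem is a reversed inequality in the step that is supposed to produce the second negative root. You already have $P_{\tilde{B_{5}}}(-2n)>0$ and $P_{\tilde{B_{5}}}(0)<0$. A point with $P_{\tilde{B_{5}}}(-t)>0$ would place a sign change in $(-t,0)$, i.e.\ a root of absolute value less than $t$. It would tell you nothing about the interval $(-2n,-t)$. What you need is $P_{\tilde{B_{5}}}(-t)<0$, so that the sign change occurs between $-2n$ and $-t$. Moreover, the inequality you propose to verify is false at both of your trial points:
\[
P_{\tilde{B_{5}}}(-1)=(n-1)\big(16n^{2}-24n+4+ln(13-10n)\big)<0,\qquad P_{\tilde{B_{5}}}(-2)=8n^{2}(l-1)(3-n)<0 .
\]
So the check you single out as the main obstacle would simply fail as written.

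With the sign flipped, the argument is complete and is essentially the paper's proof. The paper uses $P_{\tilde{B_{5}}}(-n^{2})<0$, $P_{\tilde{B_{5}}}(-2n)>0$ and $P_{\tilde{B_{5}}}(-1)<0$ to get $-n^{2}<\beta_{5}<-2n<\beta_{4}<-1$. The real difference is bookkeeping. The paper first shows, via Vieta and Descartes' rule, that there are exactly two negative roots, so that $\sum|\beta_i|=\sum\beta_i-2(\beta_4+\beta_5)$. Your trace formulation only needs two negative roots whose absolute values total more than $2n+1$, so the root count is indeed unnecessary, as you say.

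Your evaluation at $-2n-1$ does hold:
\[
P_{\tilde{B_{5}}}(-2n-1)=-(12n^{3}+54n^{2}-28n+4)-l(20n^{4}+12n^{3}-57n^{2}+13n)<0 .
\]
Like the paper's evaluation at $-n^{2}$, however, it is superfluous. $P_{\tilde{B_{5}}}$ is monic of odd degree, so $P_{\tilde{B_{5}}}(-2n)>0$ already forces a root below $-2n$. Taking $t=2$ then gives $S>2n+2$, and that step needs only $n\ge 4$.
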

\begin{proof}
    Let $\beta_{1},\beta_{2},\beta_{3},\beta_{4}$ and $\beta_{5}$ be the roots of the polynomial $P_{\tilde{B}_{5}}(x)= x^{5}+\big(-2nl+3n+2     \big)x^{4}+\big( -3nl+7n-3ln^{2}-3 \big)x^{3}+\big( -12n+6ln-4ln^{2}+2ln^{3}+4n^{2}-4n^{3}-4    \big)x^{2}+\big( -12n-4ln+8ln^{2}+4ln^{3}-4n^{2}-4n^{3}+4    \big)x+-8ln^{3}+32ln^{2}-24ln+16n^{3}-48n^{2}+32n.$
    For $n\geq 5,l\geq 4,$
\begin{align}\label{relation between roots and coefficients of poly,for l>=4 case 2.}
 \beta_{1}+\beta_{2}+\beta_{3}+\beta_{4}+\beta_{5}=&(2l-3)n-2>0, \\
 \beta_{1}\beta_{2}\beta_{3}\beta_{4}\beta_{5}=&8n\big( l(n^{2}-4n+3)-2(n^{2}-3n+2)\big),\notag\\
 =&8n\big(n^{2}(l-2)+n(6-4l)+3l-4 \big),\notag\\
 >&0.\notag
\end{align}
 Since, $\beta_{1}\beta_{2}\beta_{3}\beta_{4}\beta_{5}>0,$ $P_{\tilde{B}_{5}}(x)$ has either zero, two, or four negative roots. But for $l\geq 4, n\geq 5$ we have, $\sum\beta_{i}\beta_{j}=-3ln-3ln^{2}+7n-3<0.$ Therefore,  $P_{\tilde{B}_{5}}(x),$ will have atleast one  negative root. 
 Also, for $n\geq 5, l\geq 4,$ $P_{\tilde{B}_{5}}(-x)$ has two sign changes. Hence, $P_{\tilde{B}_{5}}(x)$ can have at most two negative roots.
 Thus,  $P_{\tilde{B}_{5}}(x)$ has two negative roots and three positive roots.\\
 Let $\beta_{5}\leq \beta_{4}<0<\beta_{3}\leq\beta_{2}\leq \beta_{1}.$
 We also have, \begin{align*}
   P_{\tilde{B}_{5}}(-n^{2})=&(-n^{10}+2n^{8}+3n^{6}+3n^{9})+(-2ln^{9}+3ln^{8}+5ln^{7})+(-11n^{7}+28n^{3})+\\&(-8n^{5})+(-4ln^{6}+4n^{6}+2ln^{5})+(-4ln^{3}-52n^{2})+(32ln^{2}-8ln^{4})+(-24ln+32n)\\< & 0,\\
  P_{\tilde{B}_{5}}(-2n)=&8n(5n-3)(l-1)>0,
  \end{align*}
    and
  $$P_{\tilde{B}_{5}}(-1)=(n-1)\big(16n^{2}+13ln-10ln^{2}+(4-24n)\big)<0.$$
Therefore,
\begin{equation*}
   -n^{2}<\beta_{5}<-2n<\beta_{4}<-1,
\end{equation*}
\begin{align*}
    2n+1&<-\beta_{4}-\beta_{5},
\end{align*}
\begin{align*}
 4n+2&<-2(\beta_{4}+\beta_{5}),
    \end{align*}
    \begin{align*}
    n&<-3n-2-2(\beta_{4}+\beta_{5}),
    \end{align*}
\begin{align*}
    n+2nl<&2nl-3n-2-2(\beta_{4}+\beta_{5}),\\
   2\big(n(2l-1)-1\big)<&(n-2)+2n(l-2)+2nl-3n-2-(\beta_{4}+\beta_{5}).\\
   \end{align*}
   From (\ref{relation between roots and coefficients of poly,for l>=4 case 2.}) we obtain,
\begin{align*}
  2\big(n(2l-1)-1\big)<&(n-2)+2n(l-2)+\beta_{1}+\beta_{2}+\beta_{3}-\beta_{4}-\beta_{5}.\\  
\end{align*}
Then, by Corollary \ref{energy k coalencence for all cases} and Lemma \ref{e spectrum of k coalecence general, case 2},
$$E_{\epsilon}(K_{2n}\circ_{n} K_{2n}\circ_{n}\cdots \circ_{n} K_{2n})<E_{\epsilon}(K_{2n}\circ_{n} K_{2n}\circ_{n}\cdots \circ_{n} K_{2n}\backslash\{e\}).$$

\end{proof}

\begin{remark}\label{remark general case of omited case in case 2 blue edge}
Using the same method as in the preceding theorem, it follows that   for $l=3,n\geq 6,$  
$$E_{\epsilon}(K_{2n}\circ_{n}K_{2n}\circ_{n}K_{2n})=10n-2<E_{\epsilon}(K_{2n}\circ_{n}K_{2n}\circ_{n}K_{2n}\backslash\{e\}).$$
\end{remark}

\begin{note} \label{note general case of omited case in case 2 blue edge}
  For $l=3,n=3$ we have,  
  $$E_{\epsilon}(K_{6}\circ_{3}K_{6}\circ_{3}K_{6})=28.422<30.9233=E_{\epsilon}(K_{6}\circ_{3}K_{6}\circ_{3}K_{6}\backslash\{e\}).$$
  For $l=3,n=4$
  $$E_{\epsilon}(K_{8}\circ_{4}K_{8}\circ_{4}K_{8})=38<40.9698=E_{\epsilon}(K_{8}\circ_{4}K_{8}\circ_{4}K_{8}\backslash\{e\}),\\$$
 and for $l=3,n=5,$ 
 $$E_{\epsilon}(K_{10}\circ_{5}K_{10}\circ_{5}K_{10})=48<51.0058=E_{\epsilon}(K_{10}\circ_{5}K_{10}\circ_{5}K_{10}\backslash\{e\}).$$
 Similerly, for $l=4, n=3,$
 $$E_{\epsilon}(K_{6}\circ_{3}K_{6}\circ_{3}K_{6}\circ_{3}K_6)=40<42.618=E_{\epsilon}(K_{6}\circ_{3}K_{6}\circ_{3}K_{6}\circ_{3}K_{6}\backslash\{e\}),$$
 and for $l=4,n=4,$
 $$E_{\epsilon}(K_{8}\circ_{4}K_{8}\circ_{4}K_{8}\circ_{4}K_{8})=54<56.7213=E_{\epsilon}(K_{8}\circ_{4}K_{8}\circ_{4}K_{8}\circ_{4}K_{8}\backslash\{e\}).$$
\end{note}
From Theorems \ref{energy change general case 2. blue edge.}
, \ref{case2 blue edge deleting energy change , l=2}, \ref{remark general case of omited case in case 2 blue edge} and \ref{note general case of omited case in case 2 blue edge}, we obtain the following result.

we get the following result.
\begin{theorem}\label{final conclusion for case 2 blue edge}
    Let $G=K_{2n}\circ_{n} K_{2n}\circ_{n}\cdots \circ_{n} K_{2n},$ (l copies of $K_{2n}$) $n\geq 3,$ $l\geq 2.$ Then,
$E_{\epsilon}(G)\leq E_{\epsilon}(G\backslash\{e\}).$  
\end{theorem}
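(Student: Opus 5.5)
The statement is a case-assembly result: for every pair $(n,l)$ with $n\geq 3$, $l\geq 2$, and $e$ an edge with one endpoint in the common clique $K_n$ and the other outside it, we need to place $(n,l)$ in a region already covered by an earlier result. The plan is to split the parameter range and cite the matching result for each piece. Throughout, $E_{\epsilon}(G)$ is given by Corollary \ref{energy k coalencence for all cases}, and $E_{\epsilon}(G\backslash\{e\})$ comes from the spectrum in Lemma \ref{e spectrum of k coalecence general, case 2}.

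\textbf{Step 1 ($l=2$, all $n\geq 3$).} This is exactly Theorem \ref{case2 blue edge deleting energy change , l=2}. There, the sign pattern of $g_{\tilde B_5}$ and the intermediate value localisation $-2n-1<\alpha_5<-2n<\alpha_4<-2<\alpha_3<0$ give $E_{\epsilon}(G\backslash\{e\})\geq 6n$. This exceeds $(3n-1)+\sqrt{(3n-1)^2+8n}$, because $(3n+1)^2>(3n-1)^2+8n$ is equivalent to $12n>12n$, which fails. That equality case needs the strict lower bound $|\alpha_3|+|\alpha_4|+|\alpha_5|>2n+2$, which the strict interlacing does give. So I would re-check that the inequality is strict rather than merely $\geq$.

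\textbf{Step 2 ($l=3$).} For $n\geq 6$, invoke Remark \ref{remark general case of omited case in case 2 blue edge}. To justify it, I would redo the argument of Theorem \ref{energy change general case 2. blue edge.} with $l=3$:
\begin{enumerate}
\item Check that $P_{\tilde B_5}(0)=8n(n^2-6n+5)>0$ for $n\geq 6$.
\item Use Descartes' rule on $P_{\tilde B_5}(-x)$ to get exactly two negative roots.
\item Localise them via $P(-2n)>0$ and sign checks at $-1$ and at a large negative value, obtaining $-\beta_4-\beta_5>2n+1$.
\item Conclude $10n-2<E_{\epsilon}(G\backslash\{e\})$ from the trace identity.
\end{enumerate}
For $n=3,4,5$, invoke the numerical values in Note \ref{note general case of omited case in case 2 blue edge}.

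\textbf{Step 3 ($l\geq 4$).} For $n\geq 5$, use Theorem \ref{energy change general case 2. blue edge.}. For $l=4$ and $n=3,4$, use the explicit computations in Note \ref{note general case of omited case in case 2 blue edge}.

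\textbf{Main obstacle.} The remaining gap is $l\geq 5$ with $n\in\{3,4\}$, which neither the theorem nor the note covers. For these I would rerun the proof of Theorem \ref{energy change general case 2. blue edge.} with $n$ fixed and $l$ as the variable, in four substeps:
\begin{enumerate}
\item Show the constant term $8n\big(n^2(l-2)+n(6-4l)+3l-4\big)$ is positive. For $n=4$ it equals $32(3l)$, and for $n=3$ it equals $24(0\cdot l+2)=48$; both are positive.
\item Verify that the $x^3$ coefficient is negative, and that $P_{\tilde B_5}(-x)$ has two sign changes. Since these coefficients are linear in $l$, this is a finite check.
\item Redo the three evaluations $P(-n^2)<0$, $P(-2n)>0$ and $P(-1)<0$ as linear functions of $l$, confirming the signs for all $l\geq 5$. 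For $n=3$, $-n^2=-9$ may be too close to $-2n=-6$, so I would first test $P(-9)$ and otherwise evaluate at $-cl$ for a suitable constant $c$.
\item Once the two negative roots satisfy $-\beta_4-\beta_5>2n+1$, conclude by the same chain of inequalities as in that theorem.
\end{enumerate}
Combining Steps 1--3 covers all $n\geq 3$, $l\geq 2$, which proves the theorem.
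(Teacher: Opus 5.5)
Your case split is the same assembly the paper uses: the $l=2$ theorem, the theorem for $n\ge5$, $l\ge4$, the remark for $l=3$, $n\ge6$, and the numerical note for $l=3$, $n\le5$ and $l=4$, $n\in\{3,4\}$. You are right that these leave $l\ge5$, $n\in\{3,4\}$ uncovered; the paper's proof silently skips this range. Your repair is fine for $n=4$, up to arithmetic. The product of the roots of $P_{\tilde{B}_{5}}$ is $8n(n-1)\bigl(l(n-3)-2(n-2)\bigr)$, which at $n=4$ is $96(l-4)$ rather than $96l$, but still positive for $l\ge5$. Then $P_{\tilde{B}_{5}}(-x)$ has sign pattern $-,-,+,+,-,-$, and $P_{\tilde{B}_{5}}(-8)=544(l-1)>0$, $P_{\tilde{B}_{5}}(-1)=492-324l<0$ give $-\beta_4-\beta_5>9$.

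For $n=3$, however, your substeps 1 and 2 are false, so the planned argument breaks. At $n=3$ the bracket $n^2(l-2)+n(6-4l)+3l-4$ equals $-4$, not $2$, and
\[
P_{\tilde{B}_{5}}(x)=x^5+(11-6l)x^4+(18-36l)x^3+(36l-112)x^2+(168l-176)x+96 .
\]
So the product of the roots is $-96<0$, and $P_{\tilde{B}_{5}}(-x)$ has three sign changes. In fact $P_{\tilde{B}_{5}}(-2)=0$ for every $l$, and for $l\ge5$ the polynomial has three negative and two positive roots. Hence ``exactly two negative roots'' fails here, and so does the identity $E_{\epsilon}(G\backslash\{e\})=(n-2)+2n(l-2)+\beta_1+\beta_2+\beta_3-\beta_4-\beta_5$.

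The missing observation is that no root count is needed. Since $P_{\tilde{B}_{5}}(x)\to-\infty$ as $x\to-\infty$, $P_{\tilde{B}_{5}}(-2n)>0$ and $P_{\tilde{B}_{5}}(-1)<0$, the two smallest roots satisfy $\beta_5<-2n$ and $\beta_4<-1$. Using $\beta_1+\beta_2+\beta_3\le|\beta_1|+|\beta_2|+|\beta_3|$,
\[
E_{\epsilon}(G\backslash\{e\})\ \ge\ (n-2)+2n(l-2)+(2nl-3n-2)-2(\beta_4+\beta_5)\ >\ 2\bigl(n(2l-1)-1\bigr)=E_{\epsilon}(G).
\]
For $n=3$ this only needs $P_{\tilde{B}_{5}}(-6)=288(l-1)>0$ and $P_{\tilde{B}_{5}}(-1)=152-102l<0$, and it handles $n=4$ as well.

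Two smaller slips:
\begin{itemize}
\item In Step 1, $(3n+1)^2-(3n-1)^2=12n>8n$, so $6n>(3n-1)+\sqrt{(3n-1)^2+8n}$ holds strictly. The non-strict bound $|\alpha_3|+|\alpha_4|+|\alpha_5|\ge 2n+2$ therefore suffices. That is just as well, since $g_{\tilde{B}_{5}}(-2)=8n^2(3-n)$ vanishes at $n=3$ and the strict localisation fails there.
\item In Step 2, $P_{\tilde{B}_{5}}(0)=-8n(n-1)(n-5)<0$ for $l=3$; it is the product of the roots that is positive.
\end{itemize}
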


\subsection{Eccentricity energy of  \texorpdfstring{$K_{2n}\circ_{n} K_{2n}\circ_{n}\ldots \circ_{n}K_{2n}\backslash\{e\}$} ffor case 3 }
In this section, we study the eccentricity eigenvalues of the graph $G=K_{2n}\circ_{n} K_{2n}\circ_{n}\cdots \circ_{n} K_{2n}\backslash\{e\}$
and we compare the corresponding eccentricity energy with that of the original graph.

\begin{lemma}\label{e(G-e)is irreducible for case 3 edge, l=2}
Let $G=K_{a_{1}}\circ_{k}K_{a_{2}}\backslash\{e\}, a_{i}\geq 3,i=1,2.$ Then,   $\epsilon(K_{a_{1}}\circ_{k} K_{a_{2}}\backslash\{e\})$ is irreducible.
\end{lemma}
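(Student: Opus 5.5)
The plan is to argue exactly as in Lemma \ref{irreducibility of general coalesence of graphs}: show that the eccentric graph $(G\backslash\{e\})^{e}$ is connected, and then invoke Lemma \ref{eccentric graph}. Write
$$V(G\backslash\{e\})=V(K_{k})\cup V(K_{a_{1}}\backslash K_{k})\cup V(K_{a_{2}}\backslash K_{k}),$$
where $K_{k}$ is the identified clique. The edge $e=uv$ is a case 3 edge, so both endpoints lie in the same part $V(K_{a_{i}})\backslash V(K_{k})$. In particular $e$ is not incident with $V(K_{k})$, and I will assume $k\geq 1$.

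\textbf{Step 1: distances and eccentricities in $G\backslash\{e\}$.}
\begin{itemize}
\item Every vertex $w\in V(K_{k})$ is still adjacent to every other vertex, since only $uv$ was removed and $w\notin\{u,v\}$. Hence $e(w)=1$.
\item Removing $e$ only changes the distance between $u$ and $v$: it becomes $d(u,v)=2$, through any common neighbour in $K_{k}$. All other distances stay as in $G$ and are at most $2$.
\item Every vertex outside $V(K_{k})$ has eccentricity $2$. This holds because it is nonadjacent to every vertex of the other part $V(K_{a_{j}})\backslash V(K_{k})$, $j\neq i$, which is nonempty since $k<a_{j}$. For $u$ and $v$ it is also witnessed by their mutual distance $2$.
\end{itemize}
This is the only place where any care is needed. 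One must check that deleting $e$ neither disconnects the graph nor changes the eccentricity of a clique vertex. Both facts rely only on $e$ having no endpoint in $K_{k}$.

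\textbf{Step 2: adjacencies in the eccentric graph.}
\begin{itemize}
\item For $w_{1},w_{2}\in V(K_{k})$ we have $d(w_{1},w_{2})=1=\min\{e(w_{1}),e(w_{2})\}$. So $V(K_{k})$ induces a complete subgraph of $(G\backslash\{e\})^{e}$.
\item For any $x\notin V(K_{k})$ and $w\in V(K_{k})$ we have $d(x,w)=1=\min\{e(x),e(w)\}$. So every vertex outside the clique is joined in $(G\backslash\{e\})^{e}$ to every vertex of $V(K_{k})$.
\end{itemize}
Consequently every vertex is within distance $2$ of any fixed $w\in V(K_{k})$ in $(G\backslash\{e\})^{e}$, and the eccentric graph is connected.

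We also note, though it is not needed, that $u$ and $v$ become adjacent in the eccentric graph: $d(u,v)=2=\min\{e(u),e(v)\}$.

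\textbf{Step 3: conclusion.} Since $(G\backslash\{e\})^{e}$ is connected, Lemma \ref{eccentric graph} gives that $\epsilon(K_{a_{1}}\circ_{k}K_{a_{2}}\backslash\{e\})$ is irreducible.

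The argument extends verbatim to $l$ copies, because the common clique still dominates the graph after $e$ is deleted.
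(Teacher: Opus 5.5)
Your proposal is correct and follows the paper's proof essentially verbatim. Both arguments show that each vertex of the common clique $V(K_{k})$ keeps eccentricity $1$, so $V(K_{k})$ is complete in the eccentric graph and joined there to every other vertex; hence the eccentric graph is connected and the eccentric-graph lemma gives irreducibility. Your explicit remark that this needs $e$ to have no endpoint in $K_{k}$ (a case 3 edge) makes precise a hypothesis the paper's proof uses only implicitly.
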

\begin{proof}
  Let  $$V(G)=V(K_{k})\cup V(K_{a_{1}}\backslash K_{k})\cup V(K_{a_{2}}\backslash K_{k})=V(G^{e}),$$ 
  where $K_{k}$ denotes the complete subgraph identified in the complete graphs $K_{a_{1}}$ and $K_{a_{2}}.$
  For any vertex $v \in V(K_{k}),$ we have $e(v)=1.$ \\
  Consequently, for any $v_{1},v_{2}\in V(K_{k}),$
  $$d(v_{1},v_{2})=1=min\{e(v_{1}),e(v_{2})\}.$$
  Thus, the vertex set $V(K_{k})$ induces a complete subgraph in $G^{e}.$\\
  Now, let $u\in V(G)\backslash V(K_{k}).$
  Then,
  $$d(u,v)=1=min\{e(u),e(v)\}.$$
  Hence, each vertex outside $V(K_{k})$ is adjacent to every vertex of $V(K_{k})$ in $G^{e}.$ It follows that  the eccentric graph $G^{e}$ is connected. By Lemma \ref{eccentric graph},
  it follows that the eccentricity matrix $\epsilon(G)$ is irreducible.
\end{proof}

\begin{lemma}\label{e spectrum of coalecence , general case, l>4, case 3}
Let $G=K_{2n}\circ_{n} K_{2n}\circ_{n}\cdots \circ_{n} K_{2n}\backslash\{e\}$($l$ copies of $K_{2n}$)  $,n\geq 3,l\geq 2.$ Then, the spectrum of $\epsilon(G)$ consists of: 
 \begin{enumerate}
     \item $-1$ with multiplicity at least $n-1.$
     \item $-2$ with multiplicity at least $1.$
     \item $0$ with multiplicity at least $(n-3)+(n-1)(l-1).$
      \item $-2n$ with multiplicity at least $l-2.$
     \item Roots of the polynomial $x^{4}-\big( 2nl-3n+1  \big)x^{3}-\big( 2n-2nl+3ln^{2}+2  \big)x^{2}+\big( 4n-12ln+2ln^{2}+2ln^{3}+4n^{2}-4n^{3}  \big)x+16n-24n^{2}+8n^{3}-16ln+16ln^{2}-4ln^{3}.$
  \end{enumerate}  
\end{lemma}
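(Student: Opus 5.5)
We follow the template of Lemmas \ref{spectrum of coalesence, general, case 1} and \ref{e spectrum of k coalecence general, case 2}: write down $\epsilon(G)$ in block form, exhibit explicit eigenvectors for $-1,-2,0$, reduce the rest to a quotient matrix, extract $-2n$ from it, and finish with a $4\times 4$ equitable quotient.

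\emph{Block form of $\epsilon(G)$.} Let $e=uv$ with $u,v\in V(K_{2n})\backslash V(K_n)$ in the first copy. Partition the vertex set into five kinds of cells:
\begin{itemize}
\item $C=V(K_n)$, with $|C|=n$;
\item $P=\{u,v\}$;
\item $W$, the remaining $n-2$ non-clique vertices of the first copy;
\item $R_2,\ldots,R_l$, the $n$ non-clique vertices of each other copy.
\end{itemize}
After deleting $e$ every vertex of $C$ still has eccentricity $1$, and all other vertices have eccentricity $2$. Also $d(u,v)=2$, while $u,v$ remain adjacent to $C\cup W$. Hence, by suitable labelling, $\epsilon(G)$ has the following blocks:
\begin{itemize}
\item $(J-I)_{n\times n}$ on $C$, and $J$ from $C$ to every other cell;
\item $2(J-I)_{2\times 2}$ on $P$, $0$ between $P$ and $W$, and $2J$ from $P$ to each $R_i$;
\item $0$ on $W$, and $2J$ from $W$ to each $R_i$;
\item $0$ on each $R_i$, and $2J$ between distinct $R_i,R_j$.
\end{itemize}

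\emph{Explicit eigenvectors.} Vectors with entries $1,-1$ on two vertices of $C$ and $0$ elsewhere give $-1$ with multiplicity at least $n-1$. The vector $1,-1$ on $u,v$ gives $-2$, since $2(J-I)$ acts on it by $-2$ and all other blocks see $P$ through constant rows. Difference vectors inside $W$ and inside each $R_i$ are annihilated, giving $0$ with multiplicity at least $(n-3)+(n-1)(l-1)$.

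\emph{Reduction to a quotient.} All these vectors are orthogonal to the $l+2$ cell indicator vectors. As in the earlier lemmas, symmetry then implies that the remaining $l+2$ eigenvalues are those of the quotient matrix $B$. Its rows are:
\begin{itemize}
\item $C$: $(n-1,2,n-2,n,\ldots,n)$;
\item $P$: $(n,2,0,2n,\ldots,2n)$;
\item $W$: $(n,0,0,2n,\ldots,2n)$;
\item $R_i$: $(n,4,2(n-2),\ldots)$, with $0$ in the $R_i$ diagonal position and $2n$ in the other $R_j$ positions.
\end{itemize}
Vectors equal to $1$ on $R_2$, $-1$ on $R_j$, and $0$ elsewhere give the eigenvalue $-2n$ of $B$ with multiplicity at least $l-2$. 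Merging $R_2,\ldots,R_l$ into one cell $R$ yields the equitable quotient
$$\tilde B=\begin{pmatrix} n-1&2&n-2&n(l-1)\\ n&2&0&2n(l-1)\\ n&0&0&2n(l-1)\\ n&4&2(n-2)&2n(l-2)\end{pmatrix}.$$
By Theorem \ref{equitable quotient matrix eigenvalue thm}, its eigenvalues are eigenvalues of $B$. Expanding $\det(xI-\tilde B)$ should reproduce the stated quartic; as a check, its $x^3$ coefficient is $-(2nl-3n+1)$, matching the trace.

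\emph{Counting and the main obstacle.} The multiplicities sum to
$$(n-1)+1+(n-3)+(n-1)(l-1)+(l-2)+4=n(l+1),$$
which is the order of $\epsilon(G)$, so the list is complete. The $-2n$ eigenvectors of $B$ sum to zero on $R$, so they are orthogonal to the lifts of $\tilde B$'s eigenvectors; hence the two families together account for all $l+2$ eigenvalues of $B$. The main obstacle is the determinant expansion of $\tilde B$ and matching its constant and lower coefficients to those in the statement. I would do this by cofactor expansion along the third row, which has two zeros, and then verify the result at a small case such as $n=3$, $l=2$. For $l=2$ the $-2n$ family is empty and $R=R_2$ directly, so the same computation applies.
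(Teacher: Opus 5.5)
Your proposal is correct and follows the paper's proof essentially step for step: the same block form of $\epsilon(G)$, the same explicit eigenvectors for $-1,-2,0$, the same quotient $B_{6}$ and $-2n$ vectors, and the same equitable quotient $\tilde{B_{6}}$. Its characteristic polynomial does expand to the stated quartic, with $\det\tilde{B_{6}}=4n(n-2)\bigl(2(n-1)-l(n-2)\bigr)$. The only difference is how you separate the $-2n$ family from the spectrum of $\tilde{B_{6}}$: you note that its eigenvectors (supported on $R$ and summing to zero there) span a complement of the $B_{6}$-invariant space of vectors constant on $R$, whereas the paper checks $P_{\tilde{B_{6}}}(-2n)=8n(l-1)(5n-2)\neq 0$; both arguments work.
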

\begin{proof}
By suitable labelling of vertices $$\epsilon(G)=\begin{pmatrix}
        (J-I)_{n\times n} &J_{n\times 2} &J_{n\times n-2}&J_{n\times n}&J_{n\times n}&\cdots&J_{n\times n}\\
        J_{2\times n} & 2(J-I)_{2\times 2} & 0_{2\times n-2}&2J_{2\times n}&2J_{2\times n}&\cdots&2J_{2\times n}\\
        J_{n-2\times n}&0_{n-2\times 2}&0_{n-2\times n-2}&2J_{n-2\times n}&2J_{n-2\times n}&\cdots&2J_{n-2\times n}\\
        J_{n\times n}&2J_{n\times 2}&2J_{n\times n-2}&0_{n\times n}&2J_{n\times n}&\cdots &2J_{n\times n}\\
        \vdots&\vdots&\vdots&\vdots&\vdots&\ddots&\vdots\\
        J_{n\times n}&2J_{n\times 2}&2J_{n\times n-2}&2J_{n\times n}&2J_{n\times n}&\cdots&0_{n\times n}
    \end{pmatrix}_{n(l+1)\times n(l+1)}.$$
For $i=2,3,\ldots, n,$ consider the vector 
\begin{align*}
x^{(i)}=(&x_{1,1},x_{1,2},x_{1,3},\ldots,x_{1,n},x_{2,1},x_{2,2},x_{3,1},x_{3,2},\ldots,x_{3,n-2},
x_{4,1},x_{4,2},\ldots,x_{4,n},x_{5,1},x_{5,2},\ldots,x_{5,n},\cdots,\\
&x_{l+2,1},x_{l+2,2},\ldots,x_{l+2,n})
\end{align*}
such that $x_{1,1}=1,$ $x_{1,i}=-1$  and $x_{q,p}=0,$ for $x_{q,p}\neq x_{1,1}, x_{1,i}.$ Then, $x^{(i)}$s are linearly independent and $\epsilon(K_{2n}\circ_{n} K_{2n}\circ_{n}\cdots\circ_{n} K_{2n}\backslash\{e\})x^{(i)}=(-1)x^{(i)}.$ Thus, $-1$ is an eigenvalue of $\epsilon(K_{2n}\circ_{n} K_{2n}\circ_{n} \cdots\circ_{n} K_{2n}\backslash\{e\})$ with multiplicity at least $n-1.$\\

Now, consider the vector 
\begin{align*}
y=(&y_{1,1},y_{1,2},y_{1,3},\ldots,y_{1,n},y_{2,1},y_{2,2},y_{3,1},y_{3,2},\ldots,y_{3,n-2},y_{4,1},y_{4,2},\ldots,y_{4,n},y_{5,1},y_{5,2},\ldots,y_{5,n},\cdots \\ & y_{l+2,1},y_{l+2,2},\ldots,y_{l+2,n})
\end{align*}
such that $y_{2,1}=1,$ $y_{2,2}=-1$ and $y_{w,g}=0,$ for $y_{w,g}\neq y_{2,1}, y_{2,2}.$ Then,  $\epsilon(K_{2n}\circ_{n} K_{2n}\circ_{n} \cdots\circ_{n} K_{2n}\backslash\{e\})y=-2y.$ Thus, $-2$ is an eigenvalue of $\epsilon(K_{2n}\circ_{n} K_{2n}\circ_{n} \cdots\circ_{n} K_{2n}\backslash\{e\})$ with multiplicity at least $1.$\\

  For $j=2,3,\ldots,n-2$ consider the vector 
\begin{align*}
u^{(3,j)}=&u_{1,1},u_{1,2},\ldots,u_{1,n},u_{2,1},u_{2,2},u_{3,1},u_{3,2},\ldots,u_{3,n-2},u_{4,1},u_{4,2},\ldots,u_{4,n},u_{5,1},u_{5,2},\ldots,u_{5,n}\cdots\\& u_{l+3,1},u_{l+3,2},\ldots z_{l+3,n})
\end{align*}
such that $u_{3,1}=1,$ $u_{3,i}=-1$  and $u_{a,b}=0,$ for $u_{a,b}\neq u_{3,1}, u_{3,i}.$
Then,
$\epsilon(K_{2n}\circ_{n} K_{2n}\circ_{n} \cdots\circ_{n} K_{2n}\backslash\{e\}) u_{4,i}=0,$ and the vectors $u^{(3,i)}$ are linearly independent. Thus, $0$ is an eigenvalue of $\epsilon(G)$ with multiplicity at least $n-3.$

Furthermore, for $s=4,5,\ldots, l+2,$ consider the vector 
$$z^{(s,t)}=z_{1,1},z_{1,2},\ldots,z_{1,n},z_{2,1},z_{2,2},z_{3,1},z_{3,2},\ldots,z_{3,n-2},z_{4,1},z_{4,2},\ldots,z_{4,n},\cdots z_{l+2,1},z_{l+2,2},\ldots z_{l+2,n})$$
such that $z_{s,1}=1,z_{s,t}=-1,$ for $2\leq t\leq n,$ and $z_{c,d}=0,$ for all $z_{c,d}\neq z_{s,1}, z_{s,t}. $ Then, $\epsilon(K_{2n}\circ_{n} K_{2n}\circ_{n} \cdots\circ_{n} K_{2n}\backslash\{e\}) z^{(s,t)}=0. $ 
Also, all the vectors $z^{(s,t)}$ are linearly independent from each other and independent of the vectors $u^{(3,j)},j=2,3,\ldots,n-2$. Thus, $0$ is an eigenvalue of $\epsilon(K_{2n}\circ_{n} K_{2n}\circ_{n} \cdots\circ_{n} K_{2n}\backslash\{e\})$ with multiplicity at least  $(n-3)+(n-1)(l-1).$\\
 We note that all the eigenvectors constructed so far are orthogonal to
$$\begin{pmatrix}
    J_{n\times 1}\\
    0_{2\times 1}\\
    0_{n-2\times 1}\\
     0_{n\times 1}\\
    \vdots\\
    0_{n\times 1}
\end{pmatrix},
\begin{pmatrix}
    0_{n\times 1}\\
    J_{2\times 1}\\
    0_{n-2\times 1}\\
    0_{n\times 1}\\
    \vdots\\
    0_{n\times 1}
\end{pmatrix},
\begin{pmatrix}
    0_{n\times 1}\\
    0_{2\times 1}\\
    J_{n-2\times 1}\\
     0_{n\times 1}\\
    \vdots\\
    0_{n\times 1}
\end{pmatrix},
\ldots
\begin{pmatrix}
    0_{n\times 1}\\
    0_{2\times 1}\\
    0_{n-2\times 1}\\
     0_{n\times 1}\\
    \vdots\\
    J_{n\times 1}
\end{pmatrix}.$$
Since $\epsilon(K_{2n}\circ_{n} K_{2n}\circ_{n} \cdots\circ_{n} K_{2n}\backslash\{e\})$ is a  symmetric matrix of order $(n+1)l$, $\mathbb{R}^{(n+1)l}$ has an orthogonal basis consisting of eigenvectors of $\epsilon(K_{2n}\circ_{n} K_{2n}\circ_{n} \cdots\circ_{n} K_{2n}\backslash\{e\}).$
                
Hence, the remaining $l+2$ eigenvectors are spanned by these $l+2$ vectors.
Thus, if $\sigma_{3}$ is an eigenvalue of $\epsilon(K_{2n}\circ_{n} K_{2n}\circ_{n} \cdots\circ_{n} K_{2n}\backslash\{e\})$ with an eigenvector $\zeta_{3},$ of the form $\begin{pmatrix}
                p_{1}J_{n\times 1}\\
                p_{2}J_{2\times 1}\\
                p_{3}J_{n-2\times 1}\\
                p_{4}J_{n\times 1}\\
                \vdots\\
                p_{l+2}J_{n\times 1}
                
\end{pmatrix},$
where $p_{i}\in \mathbb{R},$ then from $\epsilon(K_{2n}\circ_{n} K_{2n}\circ_{n} \cdots\circ_{n} K_{2n}\backslash\{e\})\zeta_{3}=\sigma_{3}\zeta_{3},$ it follows that the remaining $l+2$ eigenvalues are the eigenvalues of the matrix,
$$B_{6}=\begin{pmatrix}
    n-1&2&n-2&n&n&n&\cdots &n\\
    n&2&0&2n&2n&2n&\cdots&2n\\
    n&0&0&2n&2n&2n&\cdots&2n\\
    n&4&2(n-2)&0&2n&2n&\cdots&2n\\
    n&4&2(n-2)&2n&0&2n&\cdots&2n\\
    n&4&2(n-2)&2n&2n&0&\cdots &2n\\
    \vdots&\vdots&\vdots&\vdots&\vdots&\vdots&\ddots&\vdots\\
    n&4&2(n-2)&2n&2n&2n&\cdots&0
\end{pmatrix}_{l+2\times l+2}.$$
Now, consider the vector $$v^{(i)}=(v_{1},v_{2},v_{3},v_{4},\ldots,v_{l+2})$$
such that $v_{4}=1,v_{i}=-1,5\leq i\leq l+2,$ and $v_{j}=0,$ for $v_{j}\neq v_{4},v_{i}.$ Then, all the vectors $v^{(i)}$ are linearly  independent and $B_{6}v^{(i)}=(-2n)x^{(i)}.$ Thus, $-2n$ is an eigenvalue of $B_{6}$ with multiplicity at least $l-2.$
Consider the equitable quotient matrix of $B_{6}$ is given by 
$$\tilde{B_{6}}=\begin{pmatrix}
    n-1&2&n-2&n(l-1)\\
    n&2&0&2n(l-1)\\
    n&0&0&2n(l-1)\\
    n&4&2(n-2)&2n(l-2)
\end{pmatrix}.$$
The characteristic polynomial of 
$\tilde{B_{6}}$ is, \begin{align*}
P_{\tilde{B_{6}}}(x)=&x^{4}-x^{3}(2nl-3n+1)-x^{2}(2n-2nl+3ln^{2}+2)+x(4n-12ln+2ln^{2}+2ln^{3}+4n^{2}-4n^{3})\\&+16n-24n^{2}+8n^{3}-16ln+16ln^{2}-4ln^{3}.
\end{align*}
We have, $$P_{\tilde{B_{6}}}(-2n)=8n(l-1)(5n-2)\neq 0.$$
Thus, $-2n$ is not a  root of $P_{\tilde{B_{6}}}(x).$ Using Theorem \ref{equitable quotient matrix eigenvalue thm}, the remaining eigenvalues of $B_{6}$ are  the eigenvalues of $\tilde{B_{6}}.$ Hence, the result follows.
\end{proof}

\begin{theorem}\label{case3 red edge deleting energy change , l=2}
Let $G=K_{2n}\circ_{n}K_{2n}\backslash \{e\},n\geq 3.$
    Then, eccentricity energy of $K_{2n}\circ_{n}K_{2n}\backslash \{e\}$ is greater than the eccentricity energy of $K_{2n}\circ_{n}K_{2n}.$
\end{theorem}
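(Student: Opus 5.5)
The plan is to use Lemma \ref{e spectrum of coalecence , general case, l>4, case 3} with $l=2$, and to compare the resulting energy with the value $(3n-1)+\sqrt{(1-3n)^{2}+8n}$ given by Corollary \ref{energy k coalencence for all cases} for $l=2$. For $l=2$ the multiplicities in the lemma are exact, since the listed eigenvalues already account for all $3n$ eigenvalues:
\begin{itemize}
\item $-1$ with multiplicity $n-1$;
\item $-2$ with multiplicity $1$;
\item $0$ with multiplicity $2n-4$;
\item no $-2n$;
\item the four roots $\gamma_{1},\dots,\gamma_{4}$ of the quartic obtained by putting $l=2$ in the lemma.
\end{itemize}
I expect that quartic to simplify to
$$P(x)=x^{4}-(n+1)x^{3}-(6n^{2}-2n+2)x^{2}+(8n^{2}-20n)x+8n(n-2).$$
Hence
$$E_{\epsilon}(K_{2n}\circ_{n}K_{2n}\backslash\{e\})=(n-1)+2+\sum_{i=1}^{4}|\gamma_{i}|.$$

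Next I determine the signs of the $\gamma_{i}$. The product of the roots is $8n(n-2)>0$ and the second elementary symmetric function is $-(6n^{2}-2n+2)<0$, so the roots have mixed signs and there is an even, nonzero number of negative roots. The coefficient sign pattern $+,-,-,+,+$ has two sign changes, so Descartes' rule allows at most two positive roots. Therefore exactly two roots are negative; call them $\gamma_{4}\le\gamma_{3}<0$. Using $\gamma_{1}+\gamma_{2}+\gamma_{3}+\gamma_{4}=n+1$, we get
$$\sum_{i=1}^{4}|\gamma_{i}|=(n+1)+2S,\qquad S=|\gamma_{3}|+|\gamma_{4}|.$$
So the energy equals $2n+2+2S$.

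On the other side, since $\sqrt{(3n-1)^{2}+8n}<3n+1$ for $n\ge 3$, the original energy is less than $6n$. It therefore suffices to show $S>2n-1$.

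For this I locate the negative roots with the intermediate value theorem. A direct computation gives $P(-1)=-6n^{2}+7n<0$ and $P(0)=8n(n-2)>0$, so $\gamma_{3}\in(-1,0)$. Since $P(x)\to+\infty$ as $x\to-\infty$, it remains to show $P(-(2n-1))<0$. That places $\gamma_{4}<-(2n-1)$, giving $S>|\gamma_{4}|>2n-1$ as required.

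The main obstacle is verifying $P(-(2n-1))<0$ for all $n\ge 3$. This means expanding a quartic in $n$ and checking that its leading behaviour is negative. Heuristically, near $x=-2n$ the term $-6n^{2}x^{2}$ dominates $x^{4}$, which suggests this holds, but it must be checked. If it fails for small $n$, I would instead try $P(-2n+\tfrac12)$, or use $P(-2n)=8n(5n-2)>0$ together with a sign evaluation just to the right of $-2n$. Any residual small values of $n$ would be handled by direct numerical computation, as in Note \ref{note general case of omited case in case 2 blue edge}.
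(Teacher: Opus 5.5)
Your proof is correct, and the step you left open does hold. Writing $H=K_{2n}\circ_{n}K_{2n}$, the exact expansion is
\[
P(1-2n)=-20n^{3}+52n^{2}-29n-2=-4n^{2}(5n-13)-29n-2<0\qquad(n\ge 3).
\]
Together with $P(-1)=-6n^{2}+7n<0<8n(n-2)=P(0)$, this places the two negative roots in $(-\infty,1-2n)$ and in $(-1,0)$. The first actually lies in $(-2n,1-2n)$, since $P(-2n)=8n(5n-2)>0$. Hence $S>2n-1$ and $E_{\epsilon}(H\backslash\{e\})=2n+2+2S>6n>E_{\epsilon}(H)$.

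Your heuristic is not what drives this. At $x=-2n$ the terms $x^{4}$, $-(n+1)x^{3}$ and $-6n^{2}x^{2}$ contribute $16n^{4}+8n^{4}-24n^{4}=0$ at top order. The sign is therefore settled by the $n^{3}$ terms, so the exact expansion is genuinely needed. Your sign count also tacitly uses that the four roots are real. That holds because they are eigenvalues of the symmetric matrix $\epsilon(H\backslash\{e\})$.

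The paper takes a different route and avoids the quartic entirely. Deleting an edge with both ends in $V(K_{2n})\setminus V(K_{n})$ leaves every eccentricity unchanged. It only turns the two entries of the eccentricity matrix indexed by its endpoints from $0$ into $2$. So $\epsilon(H)\le\epsilon(H\backslash\{e\})$ entrywise, with $\epsilon(H\backslash\{e\})$ irreducible, and Perron--Frobenius gives $\rho_{\epsilon}(H)\le\rho_{\epsilon}(H\backslash\{e\})$. Eccentricity matrices have zero trace, so the energy is twice the sum of the positive eigenvalues. Since $\epsilon(H)$ has exactly one positive eigenvalue, this gives $E_{\epsilon}(H)=2\rho_{\epsilon}(H)\le 2\rho_{\epsilon}(H\backslash\{e\})\le E_{\epsilon}(H\backslash\{e\})$.

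That argument is shorter and does not depend on the explicit characteristic polynomial being right. However, it needs the original graph to have a single positive $\epsilon$-eigenvalue, which fails for $l\ge 3$ apart from a few small $(l,n)$. As written, it also only yields $\le$, not the strict inequality the statement claims; that would need the strict form of Perron--Frobenius.

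Your Vieta-plus-root-location route is more computational. In exchange it gives strict inequality directly with an explicit separating value $6n$. It is also the same template the paper has to use anyway for case 2 and for case 3 with $l\ge 4$.
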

\begin{proof}
  By the suitable labelling of vertices, we get
    $$\epsilon(K_{2n}\circ_{n} K_{2n}\backslash\{e\})=\begin{pmatrix}
        (J-I)_{n\times n} &J_{n\times 2} &J_{n\times 2}&J_{n\times n}\\
        J_{2\times n} & 2(J-I)_{2\times 2} & 0_{2\times n-2}&2J_{2\times n}\\
        J_{n-2\times n}&0_{n-2\times 2}&0_{n-2\times n}-2&2J_{n-2\times n}\\
        J_{n\times n}&_2J{n\times 2}&2J_{n\times n-2}&0_{n\times n}
    \end{pmatrix}.$$
We see that,  $\epsilon(K_{2n}\circ_{n} K_{2n})\leq \epsilon(K_{2n}\circ_{n} K_{2n}\backslash\{e\}) .$
Moreover,  by Lemma \ref{e(G-e)is irreducible for case 3 edge, l=2},
$\epsilon(K_{2n}\circ_{n} K_{2n}\backslash\{e\})$ is irreducible. By Theorem \ref{Perron-Frobenius Theorem}, we have,
$$\rho_{\epsilon}( K_{2n}\circ_{n} K_{2n})\leq \rho_{\epsilon}(K_{2n}\circ_{n} K_{2n}\backslash\{e\}). $$
By Corollary \ref{inertia for n coalecence of K2n, for all cases}, we have 
$\epsilon( K_{2n}\circ_{n} K_{2n})$ has exactly one positive eigenvalue. 
Also, \begin{align*}
    E_{\epsilon}(K_{2n}\circ_{n} K_{2n}\backslash\{e\})=&\sum_{i=1}^{n}|\epsilon_{i}|,\\
    =&\sum_{\epsilon_{i}>0}\epsilon_{i}+\sum_{\epsilon_{i}\leq 0}\epsilon_{i},\\
    =&2\sum_{\epsilon_{i}>0}\epsilon_{i},\\
    \geq& 2\rho_{\epsilon}(K_{2n}\circ_{n} K_{2n}\backslash\{e\}).
\end{align*}

Hence, $$E_{\epsilon}( K_{2n}\circ_{n} K_{2n})=2\rho_{\epsilon}( K_{2n}\circ_{n} K_{2n})\leq 2\rho_{\epsilon}(K_{2n}\circ_{n} K_{2n}\backslash\{e\})\leq E_{\epsilon}(K_{2n}\circ_{n} K_{2n}\backslash\{e\}).$$
\end{proof}

\begin{theorem}  \label{energy change general case 2.red edge.}
    Let $G=K_{2n}\circ_{n} K_{2n}\circ_{n}\cdots \circ_{n} K_{2n},$ ($l$ copies of $K_{2n}$) $n \geq 4,$ $l \geq 4.$ Then,
$E_{\epsilon}(G)\leq E_{\epsilon}(G\backslash\{e\}).$   
\end{theorem}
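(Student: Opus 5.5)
The plan is to follow the template of Theorem \ref{energy change general case 2. blue edge.}, using the spectrum of $\epsilon(G\backslash\{e\})$ from Lemma \ref{e spectrum of coalecence , general case, l>4, case 3}. For $n\geq 4$, $l\geq 4$ we are in the regime of Corollary \ref{energy k coalencence for all cases} where $E_{\epsilon}(G)=2\big(n(2l-1)-1\big)$, except at $(l,n)=(4,3)$, which is excluded here. By Lemma \ref{e spectrum of coalecence , general case, l>4, case 3}, the energy of $G\backslash\{e\}$ is
$$(n-1)+2+2n(l-2)+\sum_{i=1}^{4}|\beta_{i}|,$$
where $\beta_{1},\dots,\beta_{4}$ are the roots of $P_{\tilde{B_{6}}}(x)$. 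Since $\tilde{B_6}$ is similar to a symmetric matrix (it is the quotient of a symmetric matrix by an equitable partition), all $\beta_i$ are real.

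First I determine the sign pattern of the $\beta_i$. Their sum is $2nl-3n+1>0$. The sum of pairwise products is $-(2n-2nl+3ln^{2}+2)<0$, so not all roots have the same sign. For the product I factor the constant term as
$$4n\big(4-6n+2n^{2}-4l+4ln-ln^{2}\big)=-4n\big((n-2)^{2}l-2(n-1)(n-2)\big)=-4n(n-2)\big((n-2)l-2(n-1)\big).$$
For $n\geq4$, $l\geq4$ we have $(n-2)l\geq 4n-8>2n-2$, so the product is negative. Hence there are one or three negative roots. To exclude three, I will apply Descartes' rule to $P_{\tilde{B_6}}(-x)$: the coefficient signs of $P(x)$ are $+,-,-,\,?,-$, and the $x$-coefficient $2ln^{3}+2ln^2-12ln-4n^3+4n^2+4n$ is positive for $l\geq4$. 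So $P(-x)$ has signs $+,+,-,-,-$, giving exactly one sign change and hence exactly one negative root $\beta_{4}<0<\beta_{3}\leq\beta_{2}\leq\beta_{1}$. This yields
$$\sum|\beta_i|=2nl-3n+1+2|\beta_{4}|.$$

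The desired inequality $2\big(n(2l-1)-1\big)<(n+1)+2n(l-2)+2nl-3n+1+2|\beta_4|$ simplifies to $2|\beta_4|>2n-4$, that is, $\beta_4<-(n-2)$. I will show this by the intermediate value theorem: $P_{\tilde{B_6}}(0)<0$ and the leading coefficient is positive, so it suffices to find a point $c\leq-(n-2)$ with $P_{\tilde{B_6}}(c)>0$. Since $\beta_4$ is the unique negative root and $P\to+\infty$ as $x\to-\infty$, this forces $\beta_4<c$.

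The natural candidate is $c=-2n$, where the lemma gives $P_{\tilde{B_6}}(-2n)=8n(l-1)(5n-2)>0$. This in fact yields $\beta_4<-2n$, which is much stronger than needed, and the strict inequality follows.

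The main obstacle is the sign bookkeeping: verifying the factorisation of the constant term and the positivity of the linear coefficient across the full range $n\geq4$, $l\geq4$, and double-checking the value $P(-2n)$ quoted in the lemma. I would recompute that value directly from $\tilde{B_6}$, since it drives the whole conclusion.
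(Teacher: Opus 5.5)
Your sign analysis is correct and matches the paper's. The constant term $-4n(n-2)\big((n-2)l-2(n-1)\big)$ is negative and $P_{\tilde{B_6}}(-x)$ has one sign change. Hence there is a unique negative root $\beta_4$ and $\sum_i|\beta_i|=2nl-3n+1+2|\beta_4|$.

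The final step, however, fails in two places.

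\textbf{The reduction is miscomputed.} We have $(n+1)+2n(l-2)+(2nl-3n+1)=4nl-6n+2$, while $E_{\epsilon}(G)=4nl-2n-2$. So you need $2|\beta_4|>4n-4$, i.e.\ $\beta_4<-(2n-2)$, not $\beta_4<-(n-2)$.

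\textbf{The intermediate value step runs the wrong way, and this is fatal.} Since $P_{\tilde{B_6}}(x)\to+\infty$ as $x\to-\infty$, $P_{\tilde{B_6}}(0)<0$, and $\beta_4$ is the only negative root, $P_{\tilde{B_6}}$ is positive on $(-\infty,\beta_4)$ and negative on $(\beta_4,0)$. So $P_{\tilde{B_6}}(c)>0$ at some $c<0$ gives $c<\beta_4$, which is an \emph{upper} bound on $|\beta_4|$. The value $P_{\tilde{B_6}}(-2n)=8n(l-1)(5n-2)>0$ is correct, but it proves $\beta_4>-2n$. Your claim $\beta_4<-2n$ is false: for $n=l=4$, $P_{\tilde{B_6}}(x)=x^4-21x^3-170x^2+272x-64$ has $P_{\tilde{B_6}}(-8)=1728>0$ and $P_{\tilde{B_6}}(-7)=-694<0$, so $-8<\beta_4<-7$.

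\textbf{What is missing} is a point $c\le-(2n-2)$ with $P_{\tilde{B_6}}(c)<0$. The paper takes $c=-2n+1$. Expanding gives $P_{\tilde{B_6}}(-2n+1)=n(27-34n)-2+n^2l(43-10n)-28nl$. This is negative for $n\ge4$: it equals $-438-64l$ at $n=4$, and every term is negative for $n\ge5$. Combined with $P_{\tilde{B_6}}(-2n)>0$, this pins down $-2n<\beta_4<-2n+1$. Hence $|\beta_4|>2n-1>2n-2$, and the energy inequality follows.

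There is little room here. $\beta_4$ lies within $1$ of $-2n$ and must be shown to lie below $-(2n-2)$, so the test point has to be taken in $(\beta_4,-(2n-2)]$. With your threshold $n-2$, even a correctly oriented argument would have proved too weak a bound.
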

\begin{proof}
    Consider the polynomial  \begin{align*}
P_{\tilde{B_{6}}}(x)=&x^{4}-x^{3}(2nl-3n+1)-x^{2}(2n-2nl+3ln^{2}+2)+x(4n-12ln+2ln^{2}+2ln^{3}+4n^{2}-4n^{3})\\&+16n-24n^{2}+8n^{3}-16ln+16ln^{2}-4ln^{3}.
\end{align*}
Let $\alpha_{1},\alpha_{2},\alpha_{3}$ and $\alpha_{4}$ be the roots of $P_{\tilde{B_{6}}}(x).$
We have the following relation between the roots and coefficients of $P_{\tilde{B_{6}}}(x):$
\begin{align}\label{relation between roots and coefficients in general l, case3}
    \alpha_{1}+\alpha_{2}+\alpha_{3}+\alpha_{4}=&2ln-3n+1,\\
\alpha_{1}\alpha_{2}\alpha_{3}\alpha_{4}=&16n-24n^{2}+8n^{3}-16ln+16ln^{2}-4ln^{3}\notag\\
=&4n(n-2)\big(-l(n-2)+2(n-1)\big).\notag
\end{align}
Since $n\geq 4, l\geq 4, \alpha_{1}\alpha_{2}\alpha_{3}\alpha_{4}<0.$
Therefore, $P_{\tilde{B_{6}}}(x)$ has either one or three negative roots.
We  see that $P_{\tilde{B_{6}}}(-x)$  has  exactly one sign  change. Thus, $P_{\tilde{B_{6}}}(x)$ can have at most one negative root.
Therefore, $P_{\tilde{B_{6}}}(x)$ has exactly one negative root.\\ 
Assume that $\alpha_{4}<0<\alpha_{3}\leq \alpha_{2}\leq \alpha_{1}.$\\

Now, \begin{align*}
    P_{\tilde{B_{6}}}(-2n)=&8n(l-1)(5n-2)>0,\\
    P_{\tilde{B_{6}}}(-2n+1)=&n(27-34n)-2+n^{2}l(43-10n)-28nl<0.
\end{align*}
Therefore, 
$-2n<\alpha_{4}<-2n+1.$ 
\begin{align*}
    2n-2<&-\alpha_{4},\\
\end{align*}
 \begin{align*}
    4n-4&<-2\alpha_{4},\\
    2nl+2nl-2n-n-1&<2nl+2nl-7n+3-2\alpha_{4},\\
                  &=2+2n(l-2)+2nl-3n+1-\alpha_{4}-\alpha_{4}.
\end{align*}
Using (\ref{relation between roots and coefficients in general l, case3}), \begin{align*}
    2n(l-1)+n(2l-1)-1&<2+2n(l-2)+\alpha_{1}+\alpha_{2}+\alpha_{3}-\alpha_{4}.\\
    \end{align*}
 Thus,
 \begin{align*}
     (n-1)+2n(l-1)+n(2l-1)-1&<(n-1)+2+2n(l-2)+\alpha_{1}+\alpha_{2}+\alpha_{3}-\alpha_{4}.
 \end{align*}
 By Corollary \ref{energy k coalencence for all cases} and Lemma \ref{e spectrum of coalecence , general case, l>4, case 3}, 
 $$E_{\epsilon}(G)\leq E_{\epsilon}(G\backslash\{e\}).$$
\end{proof}
\begin{remark}\label{remark general case of omited case in case 3 red edge}
    Using the same method as in the preceding theorem, it follows that for $l=3,n\geq 5,$
 $$E_{\epsilon}(K_{2n}\circ_{n}K_{2n}\circ_{n}K_{2n})=10n-2<E_{\epsilon}(K_{2n}\circ_{n}K_{2n}\circ_{n}K_{2n}\backslash\{e\}).$$
 Similerly, for $l>4,n=3,$  
 $$E_{\epsilon}(G)=12l-8<E_{\epsilon}(G\backslash\{e\}),$$ 
 where $G=K_{6}\circ_{3}K_{6}\circ_{3}\ldots \circ_{3}K_{6},$ $l>4$ copies of $K_{6}.$
\end{remark}

\begin{note}\label{note general case of omited case in case 3 red edge}
    For $l=3,n=3,$
    $$E_{\epsilon}(K_{6}\circ_{3}K_{6}\circ_{3}K_{6})=28.422<30.8582=E_{\epsilon}(K_{6}\circ_{3}K_{6}\circ_{3}K_{6}\backslash\{e\}).$$
    For $l=3,n=4,$
    $$E_{\epsilon}(K_{8}\circ_{4}K_{8}\circ_{4}K_{8})=38<40.8431=E_{\epsilon}(K_{8}\circ_{4}K_{8}\circ_{4}K_{8}\backslash\{e\}).$$
    For $l=4,n=3,$
    $$E_{\epsilon}(K_{6}\circ_{3}K_{6}\circ_{3}K_{6}\circ_{3}K_{6})=40<42.2584=E_{\epsilon}(K_{6}\circ_{3}K_{6}\circ_{3}K_{6}\circ_{3}K_{6}\backslash\{e\}).$$
\end{note}


From Theorems \ref{case3 red edge deleting energy change , l=2},  \ref{energy change general case 2.red edge.}, Remark \ref{remark general case of omited case in case 3 red edge}, and Note \ref{note general case of omited case in case 3 red edge},
we get the following result.
\begin{theorem}\label{final conclusion for case 3 red edge}
    Let $G=K_{2n}\circ_{n} K_{2n}\circ_{n}\cdots \circ_{n} K_{2n},$ (l copies of $K_{2n}$) $n\geq 3,$ $l\geq 2.$ Then,
$E_{\epsilon}(G)\leq E_{\epsilon}(G\backslash\{e\}).$  
\end{theorem}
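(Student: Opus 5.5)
The statement is an assembly result for Case 3 edges, so the plan is to cover every pair $(l,n)$ with $n\geq 3$, $l\geq 2$ by one of the results already established for such edges and then take the union.

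The first step is to partition the parameter range.
\begin{itemize}
\item For $l=2$ and all $n\geq 3$, Theorem \ref{case3 red edge deleting energy change , l=2} applies directly. Its argument is as follows. Deleting an edge between two non-clique vertices of the same copy of $K_{2n}$ only raises the corresponding entry of $\epsilon$ from $0$ to $2$, so $\epsilon(G)\leq\epsilon(G\backslash\{e\})$ entrywise. By Lemma \ref{e(G-e)is irreducible for case 3 edge, l=2} the larger matrix is irreducible, so Theorem \ref{Perron-Frobenius Theorem} gives $\rho_\epsilon(G)\leq\rho_\epsilon(G\backslash\{e\})$. By Corollary \ref{inertia for n coalecence of K2n, for all cases}, $G$ has exactly one positive $\epsilon$-eigenvalue, so $E_\epsilon(G)=2\rho_\epsilon(G)$. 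Since the trace is zero, $E_\epsilon(G\backslash\{e\})$ is twice the sum of the positive eigenvalues, which is at least $2\rho_\epsilon(G\backslash\{e\})$.
\item For $l\geq 4$ and $n\geq 4$, I use Theorem \ref{energy change general case 2.red edge.}.
\item For $l=3$ and $n\geq 5$, and for $l>4$ and $n=3$, I use Remark \ref{remark general case of omited case in case 3 red edge}.
\item The remaining pairs are $(l,n)=(3,3),(3,4),(4,3)$, which are settled numerically in Note \ref{note general case of omited case in case 3 red edge}.
\end{itemize}
I will check explicitly that these four families exhaust $\{(l,n):l\geq2,n\geq3\}$.

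The substantive step is justifying the Remark, since it is only asserted to follow "by the same method". For $l=3$ I would reuse Lemma \ref{e spectrum of coalecence , general case, l>4, case 3}. There I would check the following:
\begin{itemize}
\item The constant term $4n(n-2)(-l(n-2)+2(n-1))$ is negative. For $l=3$ it equals $4n(n-2)(4-n)$, which is negative for $n\geq5$.
\item $P_{\tilde B_6}(-x)$ has one sign change, so exactly one root $\alpha_4$ is negative.
\item $P_{\tilde B_6}(-2n)>0$ and $P_{\tilde B_6}(-2n+1)<0$, so $|\alpha_4|>2n-1$.
\end{itemize}
Then
\[
E_\epsilon(G\backslash\{e\})=(n-1)+2+2n(l-2)+(2nl-3n+1)+2|\alpha_4|,
\]
and this exceeds $2(n(2l-1)-1)$ as soon as $|\alpha_4|>n-2$. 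For $l=3$ the right-hand side is $10n-2$, the value given by Corollary \ref{energy k coalencence for all cases}. For $n=3$ and $l>4$ I would do the same computation, with $E_\epsilon(G)=2(3(2l-1)-1)=12l-8$.

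The main obstacle is the sign of $P_{\tilde B_6}(-2n+1)$ in these boundary regimes, where the polynomial bound used in Theorem \ref{energy change general case 2.red edge.} may fail for small $n$. If it fails, I would instead evaluate at $-(n-2)$ to get the weaker but sufficient bound $|\alpha_4|>n-2$. Should that also fail, I would fall back on the entrywise comparison and Perron–Frobenius for the spectral radius, combined with counting negative eigenvalues.
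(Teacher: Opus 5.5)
Your case split is the same as the paper's: $l=2$ by Perron--Frobenius, $l,n\ge4$ by locating the negative root of $P_{\tilde B_6}$, then the Remark and the Note. The problem is in the justification you add for the Remark, where the decisive threshold is wrong.

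From your own formula,
$E_\epsilon(G\backslash\{e\})-2\big(n(2l-1)-1\big)=2|\alpha_4|-(4n-4)$.
So you need $|\alpha_4|>2n-2$, not $|\alpha_4|>n-2$.

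Your main route gives $P_{\tilde B_6}(-2n)=8n(l-1)(5n-2)>0$ and $P_{\tilde B_6}(1-2n)<0$, hence $|\alpha_4|>2n-1$. That is fine wherever it applies. The paper's own expression is $P_{\tilde B_6}(1-2n)=n(27-34n)-2+n^2l(43-10n)-28nl$. This is negative for all $n\ge4$, but for $n=3$ it equals $33l-227$, which is positive once $l\ge7$.

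That range lies inside the Remark's case $n=3$, $l>4$, and there the unique negative root is in $(-5,-4)$. Your fallbacks do not cover it:
\begin{itemize}
\item Evaluating at $-(n-2)=-1$ gives only $|\alpha_4|>1$. For $l=7$ this yields $E_\epsilon(G\backslash\{e\})>70$, against $E_\epsilon(G)=76$.
\item The Perron--Frobenius fallback does not apply, because $G$ then has two positive $\epsilon$-eigenvalues, so $E_\epsilon(G)\neq2\rho_\epsilon(G)$.
\end{itemize}

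The repair is to evaluate at $2-2n$ instead. A direct computation gives
$P_{\tilde B_6}(2-2n)=-4n(5n^2-12n+8)-4n(l-1)(n-2)(5n-6)<0$ for every $n\ge2$; for $n=3$ it is $-108l-96$.

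Now suppose $(n-2)(l-2)>2$, which is exactly the range of the Remark together with the $l,n\ge4$ theorem. Then the constant term $4n(n-2)\big(2(n-1)-l(n-2)\big)$ is negative, and $P_{\tilde B_6}(-x)$ has a single sign change. So the unique negative root satisfies $2n-2<|\alpha_4|<2n$, and the inequality is strict.

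This also patches the paper's Remark: its appeal to ``the same method'' (evaluation at $1-2n$) does not literally work for $n=3$, $l\ge7$.

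Conversely, when $(n-2)(l-2)\le2$ (namely $l=2$ and the three cases of the Note), $G$ has exactly one positive $\epsilon$-eigenvalue. Your $l=2$ Perron--Frobenius argument therefore handles those cases too, without numerics.
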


Combining Theorems \ref{final conclusion for case 1 green edge}, \ref{final conclusion for case 2 blue edge}, and \ref{final conclusion for case 3 red edge}, we obtain the main result of our study.
\begin{theorem}
    Let $G=K_{2n}\circ_{n} K_{2n}\circ_{n}\cdots \circ_{n} K_{2n},$ (l copies of $K_{2n}$) $n\geq 3,$ $l\geq 2,$ and $e$ be any edge in $G.$ Then,
$E_{\epsilon}(G)\leq E_{\epsilon}(G\backslash\{e\}).$ 
\end{theorem}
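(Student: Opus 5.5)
The plan is a direct case reduction. The edge set of $G=K_{2n}\circ_{n}K_{2n}\circ_{n}\cdots\circ_{n}K_{2n}$ splits into three disjoint classes. Case 1 consists of the edges of the common clique $K_{n}$. Case 2 consists of the edges joining $K_{n}$ to $V(K_{2n})\backslash V(K_{n})$ in some copy. Case 3 consists of the edges with both endpoints in $V(K_{2n})\backslash V(K_{n})$ of a single copy. This list is exhaustive. Two vertices lying outside $K_{n}$ in different copies are not adjacent, so every edge lies inside one copy of $K_{2n}$, and any two vertices of a copy are either both in $K_{n}$, one in and one out, or both out.

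The main step is to note that the automorphism group of $G$ acts transitively on each class. It permutes the vertices of $K_{n}$ arbitrarily, permutes the copies, and permutes the private vertices within each copy. Consequently $G\backslash\{e\}$, and therefore $E_{\epsilon}(G\backslash\{e\})$, depends only on the class of $e$. This justifies the ``suitable labelling'' used in Lemmas \ref{spectrum of coalesence, general, case 1}, \ref{e spectrum of k coalecence general, case 2} and \ref{e spectrum of coalecence , general case, l>4, case 3}. The conclusion then follows for an arbitrary edge $e$ by invoking Theorem \ref{final conclusion for case 1 green edge}, Theorem \ref{final conclusion for case 2 blue edge} or Theorem \ref{final conclusion for case 3 red edge} according to the class of $e$. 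Each of these already covers the full range $n\geq 3$, $l\geq 2$.

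The only point needing care is that those three theorems really cover every pair $(n,l)$ with $n\geq3$, $l\geq2$. I would check this by listing the subranges. For Case 2 the pieces are: $l=2$ (Theorem \ref{case2 blue edge deleting energy change , l=2}); $l\geq4$ with $n\geq5$ (Theorem \ref{energy change general case 2. blue edge.}); $l=3$ with $n\geq6$ (Remark \ref{remark general case of omited case in case 2 blue edge}); and the finitely many pairs $(l,n)\in\{(3,3),(3,4),(3,5),(4,3),(4,4)\}$ in Note \ref{note general case of omited case in case 2 blue edge}. These leave $l\geq5$ with $n\in\{3,4\}$ uncovered. I expect this to be the main obstacle, and I would treat it exactly as in Theorem \ref{energy change general case 2. blue edge.}. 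For fixed $n\in\{3,4\}$ I would use Descartes' rule on $P_{\tilde B_5}$ to get the sign pattern. I would then locate the negative roots by evaluating $P_{\tilde B_5}$ at $-2n$ and at $-1$, which gives a lower bound on the negative part.

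Case 3 has the same kind of gap, namely $l\geq5$ with $n=4$ outside Theorem \ref{energy change general case 2.red edge.}, Remark \ref{remark general case of omited case in case 3 red edge} and Note \ref{note general case of omited case in case 3 red edge}. I would close it by the same root-location argument using $P_{\tilde B_6}(-2n)>0$ and $P_{\tilde B_6}(-2n+1)<0$. Once these sign checks are confirmed, combining the three case theorems gives $E_{\epsilon}(G)\leq E_{\epsilon}(G\backslash\{e\})$ for every edge $e$.
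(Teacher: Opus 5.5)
Your route is the same as the paper's: its proof of this theorem just combines the three case theorems, and your automorphism remark makes explicit the ``suitable labelling'' that the case lemmas take for granted. Your Case 2 audit is correct and finds something the paper misses. None of its Case 2 results covers $l\geq 5$ with $n\in\{3,4\}$.

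Your proposed fix does close that gap. In that range $(n-2)(l-2)>2$, so $E_\epsilon(G)=4nl-2n-2$. The Case 2 lemma gives $E_\epsilon(G\backslash\{e\})=4nl-6n-4+2S_-$, where $S_-$ is the sum of the absolute values of the negative roots of $P_{\tilde B_5}$. So what you need is exactly $S_->2n+1$.

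From the paper's polynomial (which checks out at $n=3,4$) you get:
\begin{itemize}
\item for $n=3$: $P_{\tilde B_5}(-6)=288(l-1)>0$ and $P_{\tilde B_5}(-1)=152-102l<0$;
\item for $n=4$: $P_{\tilde B_5}(-8)=544(l-1)>0$ and $P_{\tilde B_5}(-1)=492-324l<0$.
\end{itemize}
Since $P_{\tilde B_5}(x)\to-\infty$ as $x\to-\infty$, there is a root below $-2n$ and another in $(-2n,-1)$, which gives $S_->2n+1$.

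Do not copy the Descartes step of the $n\geq 5$ theorem, though. At $n=3$ the product of the roots is $8n(n-1)\bigl(l(n-3)-2(n-2)\bigr)=-96$, and $P_{\tilde B_5}(0)=96>0$. So there are three negative roots, the third lying in $(-1,0)$, not two as in that theorem. This is harmless, because extra negative roots only increase $S_-$. No root count is needed at all.

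Two corrections to your audit. First, the Case 3 gap you describe does not exist. The Case 3 theorem is stated for $n\geq 4$, $l\geq 4$, so it already contains $n=4$, $l\geq 5$, and the remark handles $n=3$, $l>4$. Your extra check just repeats that theorem's proof.

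Second, coverage is not the only thing to watch. The Case 1 theorem for $l\geq 3$ uses $E_\epsilon(G)=2\bigl(n(2l-1)-1\bigr)$. This is false at $(l,n)=(3,3)$, where $\epsilon(G)$ has a single positive eigenvalue and $E_\epsilon(G)=14+\sqrt{208}\approx 28.42$. The conclusion still holds there. The negative root $\beta_3$ of $x^3-14x^2+13x+42$ lies in $(-2,-1)$, so $E_\epsilon(G\backslash\{e\})=28+2|\beta_3|>30$.
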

\section*{Acknowledgement}
The research of Anjitha Ashokan is supported by the University Grants Commission of India under the beneficiary code BININ05086971.
\section*{Declarations}
 On behalf of all authors, the corresponding author states that there is no conflict of interest.

 \bibliographystyle{plain}
  \bibliography{reference}
\end{document}